\newtheorem{thm}{Theorem}[section]
\newtheorem{lemma}{Lemma}[section]
\newtheorem{dfn}{Definition}[section]
\newtheorem{cor}{Corollary}[section]
\def\eps{{\varepsilon}}
\def\Bad{{\rm Bad}}
\begin{document}

\baselineskip=17pt


\title{On homogeneous and inhomogeneous Diophantine approximation over the f\/ields of formal
power series}

\author{YANN BUGEAUD, ZHENLIANG ZHANG}

\date{}

\maketitle


\renewcommand{\thefootnote}{}

\footnote{2010 \emph{Mathematics Subject Classification}: 11K55, 11J04, 28A80.}

\footnote{\emph{Key words and phrases}: Diophantine approximation,
exponent of homogeneous approximation,
 exponent of inhomogeneous approximation,  Hausdorff dimension.}

\renewcommand{\thefootnote}{\arabic{footnote}}
\setcounter{footnote}{0}


\begin{abstract}

We prove over fields of power series the analogues of several Diophantine
approximation results obtained over the field of real numbers.
In particular we establish the power series analogue
of Kronecker's theorem for matrices, together with a
quantitative form of it, which can also be seen as a transference inequality between
uniform approximation and inhomogeneous approximation.
Special attention is devoted to the one dimensional case. Namely,
we give a necessary and sufficient condition on an irrational power series $\alpha$
which ensures that, for some positive $\eps$, the set
$$
\liminf_{Q \in \mathbb{F}_q[z], \,\, \deg Q \to \infty}
 \| Q \| \cdot \min_{{y}\in \mathbb{F}_q[z]}\|Q \alpha - \theta - y\|
\geq \eps
$$
has full Hausdorff dimension.

\end{abstract}

\section{Introduction}

Let $q$ be a power of a prime number $p$ and $\mathbb{F}_q$ the finite field of order $q$.
Recall that $\mathbb{F}_q[z]$ and $\mathbb{F}_q(z)$ denote the ring of polynomials and the field
of rational functions over $\mathbb{F}_q$, respectively.
Let $\mathbb{F}_q((z^{-1}))$ denote the field of formal power series
$x=\sum_{i=-n}^\infty a_{i} z^{-i}$ over the field $\mathbb{F}_q$.
We equip $\mathbb{F}_q((z^{-1}))$ with the norm $\|x\|=q^n$,
where $a_{-n} \neq 0$ is the first non-zero coefficient in the expansion of the
non-zero power series $x$. This integer $n$ is called the {{degree}} of $x$ and denoted by $\deg x$.

The sets
$\mathbb{F}_q[z]$, $\mathbb{F}_q(z)$, and $\mathbb{F}_q((z^{-1}))$
play the roles of $\mathbb{Z}, \mathbb{Q},$ and $\mathbb{R}$, respectively.
A power series $x$ in $\mathbb{F}_q((z^{-1}))$ but not in $\mathbb{F}_q(z)$ is called irrational.
We denote by $[x]$ and $\{x\}$ the ``integral part'' and the ``fractional part'' of the power series
$x=\sum_{i=-n}^\infty a_{i}z^{-i}$ in $\mathbb{F}_q((z^{-1}))$, defined as
$$
[x]=\sum_{i=-n}^0 a_{i}z^{-i},\;\;\;\{x\}=\sum_{i=1}^{\infty} a_{i}z^{-i}.
$$
In particular, $[x]$ is a polynomial in $z$.

Let $\mathbb{I}=\left\{x \in \mathbb{F}_q((z^{-1}))\colon\|x\|<1\right\}$ be the open unit ball.
A natural measure on $\mathbb{I}$ is the normalized
Haar measure on $\Pi_{n=1}^{\infty}\mathbb{F}_q$,
which we denote by  $\mu$. Observe that $\mu (\mathbb{I}) = 1$.
If $B(x, q^{-r})$ is the open ball of center $x$ in $\mathbb{I}$ and radius $q^{-r}$,
namely
$$
B(x,r)=\{y\in \mathbb{I}\colon \|y-x\|< q^{-r}\},
$$
then $\mu (B(x,q^{-r}))=q^{-r}$.
Since the norm $\| \cdot \|$ is non-Archimedean, any two balls $C_1$ and $C_2$
satisfy either $C_1\cap C_2=\emptyset$, $C_1\subset C_2$, or $C_2\subset C_1$.
This is sometimes referred to as \emph{the ball intersection property}.
Moreover, the distance between any two disjoint balls is not less
than the maximal radius of the two balls.

For any (column) vector $\underline{\theta}$ in $\mathbb{F}_q((z^{-1}))^n$,
we denote by $\|\underline{\theta}\|$ the maximum of the norm of its coordinates
and by
$$
|\langle \underline{\theta}\rangle|
=\min_{\underline{y}\in \mathbb{F}_q[z]^n}\|\underline{\theta}-\underline{y}\|
$$
the maximum of the distances of its coordinates to their integral parts.   

There are numerous results on Diophantine approximation in the fields of formal power series,
see \cite{Lasjaunias} and Chapter 9 of \cite{Bugeaud1} for references;
more recent works include \cite{Bank,GaGh17,GaGh19,Kirstensen3,zhang,zheng}.
However, only few results are known on the relation between
homogenous and inhomogeneous Diophantine approximation.
Our first result is the analogue of Kronecker's theorem over fields of formal power series.
As far as we are aware, it has not yet been proved in such a generality (see, however,
\cite{Car52,Mahler} for the case of column matrices).
The transposed matrix of a matrix $A$ is denoted by $A^T$.

\begin{thm}
\label{kronecker}
Let $m, n$ be positive integers.
Let $A$ be in $\mathcal{M}_{n,m}(\mathbb{F}_q((z^{-1})))$ and
$\underline{\theta}$ in $\mathbb{F}_q((z^{-1}))^n$.
Then the following two statements are equivalent:
\begin{itemize}
  \item [(1)] For every $\eps>0$, there exists a
  polynomial vector $\underline{x}$ in $\mathbb{F}_q[z]^m$ such that
  $$
  |\langle A\underline{x}-\underline{\theta}\rangle|\leq \eps.
  $$
  \item [(2)] If $\underline{u}=(u_1,\cdots,u_n)^T$ is any polynomial vector
  such that $A^T \underline{u}$ is in $ \mathbb{F}_q[z]^m$, then
  $$ u_1\theta_1+\cdots+u_n\theta_n\in \mathbb{F}_q[z].$$
\end{itemize}
\end{thm}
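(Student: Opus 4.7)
The plan is to establish the two implications separately; the main work is the reverse direction, for which I will use Pontryagin duality on the compact abelian group $G=\mathbb{F}_q((z^{-1}))^n/\mathbb{F}_q[z]^n$.

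For $(1)\Rightarrow(2)$, fix $\underline{u}\in\mathbb{F}_q[z]^n$ with $A^T\underline{u}\in \mathbb{F}_q[z]^m$ and let $\eps>0$. By (1) there are $\underline{x}\in\mathbb{F}_q[z]^m$ and $\underline{y}\in\mathbb{F}_q[z]^n$ with $\|A\underline{x}-\underline{\theta}-\underline{y}\|\leq \eps$. Taking the inner product with $\underline{u}$ yields
$$
\underline{u}^{T}(A\underline{x}-\underline{y})-\underline{u}^{T}\underline{\theta}=\underline{u}^{T}(A\underline{x}-\underline{y}-\underline{\theta}),
$$
whose left-hand side has polynomial part $(A^T\underline{u})^T\underline{x}-\underline{u}^T\underline{y}\in\mathbb{F}_q[z]$, and whose right-hand side has norm at most $\|\underline{u}\|\eps$ by the ultrametric inequality. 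Hence the distance from $\underline{u}^T\underline{\theta}$ to $\mathbb{F}_q[z]$ is at most $\|\underline{u}\|\eps$; since $\underline{u}$ is fixed and $\eps>0$ arbitrary, that distance equals $0$, i.e., $\underline{u}^T\underline{\theta}\in\mathbb{F}_q[z]$.

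For $(2)\Rightarrow(1)$, fix a non-trivial additive character $\psi\colon\mathbb{F}_q\to\mathbb{C}^{\times}$ and write $\mathrm{Res}(x)$ for the coefficient of $z^{-1}$ in $x\in\mathbb{F}_q((z^{-1}))$. The first step is to identify the discrete dual $\widehat G$ with $\mathbb{F}_q[z]^n$ via the pairing
$$
\langle[\underline{\theta}],\underline{u}\rangle=\psi\bigl(\mathrm{Res}(\underline{u}^T\underline{\theta})\bigr),\qquad \underline{u}\in\mathbb{F}_q[z]^n;
$$
this pairing is well defined on $G$ (the residue of a polynomial vanishes) and non-degenerate, as is verified by plugging in test vectors of the form $cz^k\underline{e}_j$ and using non-triviality of $\psi$. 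Statement (1) says $[\underline{\theta}]\in\overline{H}$, where $H=\{[A\underline{x}]:\underline{x}\in\mathbb{F}_q[z]^m\}\subseteq G$, so by the bi-annihilator theorem (1) is equivalent to $\langle[\underline{\theta}],\underline{u}\rangle=1$ for every $\underline{u}\in H^{\perp}$.

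Two scaling arguments now close the loop. First, $\underline{u}\in H^{\perp}$ iff $\psi\bigl(\mathrm{Res}((A^T\underline{u})^T\underline{x})\bigr)=1$ for every $\underline{x}\in\mathbb{F}_q[z]^m$, and running $\underline{x}$ through $cz^k\underline{e}_j$ reduces this to $A^T\underline{u}\in\mathbb{F}_q[z]^m$. Second, if $\underline{u}\in H^{\perp}$ then every $\mathbb{F}_q[z]$-multiple of $\underline{u}$ is also in $H^{\perp}$, so requiring $\langle[\underline{\theta}],cz^k\underline{u}\rangle=1$ for all $c\in\mathbb{F}_q$ and $k\geq 0$ forces every coefficient of a negative power of $z$ in $\underline{u}^T\underline{\theta}$ to vanish, i.e., $\underline{u}^T\underline{\theta}\in\mathbb{F}_q[z]$; this is precisely (2). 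The genuine content lies in the duality setup---verifying that $\widehat G\cong\mathbb{F}_q[z]^n$ through the residue pairing and invoking the bi-annihilator theorem for locally compact abelian groups; everything else is formal. A Cassels-style inductive alternative on $n$, using a formal-power-series Minkowski theorem on linear forms, would bypass abstract duality at the cost of heavier notation.
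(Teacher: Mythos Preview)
Your proof is correct and your $(1)\Rightarrow(2)$ argument matches the paper's essentially verbatim. For $(2)\Rightarrow(1)$, however, you take a genuinely different route. The paper follows Cassels: it first proves a transference theorem (Theorem~\ref{thm for tran}) via Mahler's power-series analogue of Minkowski's successive-minima theorem, deduces from it a corollary (Corollary~\ref{cor for tran}) that turns a hypothesis of the shape ``$|\langle \underline{u}\cdot\underline{\theta}\rangle|$ is controlled by $|\langle A^T\underline{u}\rangle|$ and $\|\underline{u}\|$'' into the existence of a good inhomogeneous approximation, and then verifies that hypothesis~(2) forces this control for suitable parameters $s,t$. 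Your argument instead identifies $\widehat{G}\cong\mathbb{F}_q[z]^n$ through the residue pairing, recognises~(1) as $[\underline{\theta}]\in\overline{H}$, and closes via the bi-annihilator theorem together with the observation that $H^{\perp}$ is an $\mathbb{F}_q[z]$-submodule.

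What each approach buys: your duality proof is shorter and more conceptual for the bare qualitative statement, and isolates exactly where the content lies (self-duality of $\mathbb{F}_q((z^{-1}))$ under the residue pairing and closedness of $\mathbb{F}_q[z]$ under polynomial multiplication). The paper's geometry-of-numbers machinery is heavier to set up, but it is not wasted effort: the same Corollary~\ref{cor for tran} immediately yields the quantitative Transference Lemma~\ref{tranlemma}, which is the engine behind the inequalities $\omega(A,\underline{\theta})\ge 1/\widehat{\omega}(A^T)$ and $\widehat{\omega}(A,\underline{\theta})\ge 1/\omega(A^T)$ in Theorem~\ref{mainthm}. Your closing remark about a ``Cassels-style alternative \dots\ using a formal-power-series Minkowski theorem'' is in fact precisely the path the paper takes, and it is chosen because the quantitative byproduct is needed downstream.
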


As in \cite{Bugeaud3}, which deals with the real case, our aim is to give a
quantitative version of Theorem \ref{kronecker}.
Following \cite{Bugeaud3}, we introduce several exponents of homogeneous
and inhomogeneous Diophantine approximation.
Let $n$ and $m$ be positive integers
and $A$ a matrix in $\mathcal{M}_{n,m}(\mathbb{F}_q((z^{-1})))$.
Let $\underline{\theta}$ be in $\mathbb{F}_q((z^{-1}))^n$.
We denote by $\omega(A,\underline{\theta})$ the supremum
of the real numbers $\omega$ for which,
for arbitrarily large real numbers $H$,
the inequalities
\begin{equation}
\label{exponent}
|\langle A\underline{x}-\underline{\theta}\rangle|\leq H^{-\omega}\;\;\;
\text{and }\;\;\;\|\underline{x}\|\leq H
\end{equation}
have a solution $\underline{x}$ in $\mathbb{F}_q[z]^m$.
Let $\widehat{\omega}(A,\underline{\theta})$ be the
supremum of the real numbers $\omega$ for which,
for all sufficiently large positive real numbers $H$,
the inequalities (\ref{exponent}) have a solution $\underline{x}$ in $\mathbb{F}_q[z]^m$.
It is obvious that
$$
\omega(A,\underline{\theta})\geq \widehat{\omega}(A,\underline{\theta})\geq0.
$$
We define furthermore two homogeneous exponent $\omega(A)$ and $\widehat{\omega}(A)$
as in (\ref{exponent})
when $\underline{\theta}$ is the zero vector,
requiring moreover that the polynomial solution $\underline{x}$ should be non-zero.

Our second result is the power series analogue of the main result of \cite{Bugeaud3}.   
Throughout this paper, the quantity $1/+\infty$ is understood to be $0$.

\begin{thm}
\label{mainthm}
Let $m, n$ be positive integers.
Let $A$ be in $\mathcal{M}_{n,m}(\mathbb{F}_q((z^{-1})))$ and
$\underline{\theta}$ in $\mathbb{F}_q((z^{-1}))^n$.
Then, we have the lower bounds
\begin{equation}
\label{theorem inequalities}
\omega(A,\underline{\theta})\geq  \frac{1}{\widehat{\omega}(A^T)} \;\;\;  \text{and}\;\;\;
\widehat{\omega}(A,\underline{\theta})\geq  \frac{1}{\omega(A^T)},
\end{equation}
with equalities in (\ref{theorem inequalities}) for almost all $\underline{\theta}$
with respect to the Haar measure on $\mathbb{F}_q((z^{-1}))^n$.If $\underline{\theta}$ is not in
$A\mathbb{F}_q[z]^m+\mathbb{F}_q[z]^n$, then we also have the upper bound
$$
\widehat{\omega}(A,\underline{\theta}) \le \omega (A).
$$
\end{thm}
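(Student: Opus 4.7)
My plan is to prove the three assertions separately, following the scheme of \cite{Bugeaud3} and replacing its geometry-of-numbers inputs by their non-Archimedean analogues.

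\textbf{Step 1: Transference lower bounds.} The inequalities $\omega(A,\underline{\theta})\ge 1/\widehat{\omega}(A^T)$ and $\widehat{\omega}(A,\underline{\theta})\ge 1/\omega(A^T)$ are Khintchine-type transference principles. Fix $\omega$ strictly below $\widehat{\omega}(A^T)$ and let $H$ be large; by definition there is $\underline{u}\in\mathbb{F}_q[z]^n$ with $\|\underline{u}\|\le H$ and $|\langle A^T\underline{u}\rangle|\le H^{-\omega}$. The pivot is the bilinear identity
$$
\underline{u}^T\bigl(A\underline{x}-\underline{\theta}-\underline{y}\bigr)=(A^T\underline{u})^T\underline{x}-\underline{u}^T\underline{\theta}-\underline{u}^T\underline{y},
$$
which converts control of $A^T\underline{u}$ modulo $\mathbb{F}_q[z]^m$ into control of a single linear functional of the primal error $A\underline{x}-\underline{\theta}-\underline{y}$. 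Taking $n$ linearly independent short dual vectors $\underline{u}$, produced by an ultrametric successive-minima theorem for the lattice $\mathbb{F}_q[z]^{m+n}$ inside $\mathbb{F}_q((z^{-1}))^{m+n}$, and inverting the resulting $n\times n$ polynomial linear system yields $\underline{x}\in\mathbb{F}_q[z]^m$ with $\|\underline{x}\|\le H$ and $|\langle A\underline{x}-\underline{\theta}\rangle|\le H^{-1/\omega}$. Letting $\omega\uparrow\widehat{\omega}(A^T)$ gives the first inequality; the second follows by the same argument with ``arbitrarily large $H$'' replaced by ``every sufficiently large $H$''.

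\textbf{Step 2: Matching equalities almost everywhere.} For the a.e.\ upper bound $\omega(A,\underline{\theta})\le 1/\widehat{\omega}(A^T)$, I would exploit the same short dual family against the Haar measure on $\mathbb{F}_q((z^{-1}))^n$. Writing $A^T\underline{u}=\underline{p}+\underline{r}$ with $\underline{p}\in\mathbb{F}_q[z]^m$ and $\|\underline{r}\|\le H^{-\omega}$, one has, modulo $\mathbb{F}_q[z]$,
$$
\underline{u}^T(A\underline{x}-\underline{\theta})\equiv \underline{r}^T\underline{x}-\underline{u}^T\underline{\theta},
$$
and for $\underline{\theta}$ outside an explicit null set the right-hand side is bounded below by a definite quantity whenever $\|\underline{x}\|$ sits beneath a threshold governed by $\omega$. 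A Borel--Cantelli sweep over the scales $H=q^k$ then rules out, on a full-measure set, any solution to $|\langle A\underline{x}-\underline{\theta}\rangle|\le H^{-1/\widehat{\omega}(A^T)-\eps}$. The second equality is proved in the same way, using short dual vectors for $\omega(A^T)$ in place of $\widehat{\omega}(A^T)$.

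\textbf{Step 3: Conditional upper bound.} Assume for contradiction that $\widehat{\omega}(A,\underline{\theta})>\omega(A)$ and fix $\omega'$ strictly between them. For every large integer $k$, choose $\underline{x}_k\in\mathbb{F}_q[z]^m$ with $\|\underline{x}_k\|\le q^k$ and $|\langle A\underline{x}_k-\underline{\theta}\rangle|\le q^{-k\omega'}$. If the sequence $(\underline{x}_k)$ were eventually constant, equal to some $\underline{x}_\infty$, the bound would force $A\underline{x}_\infty-\underline{\theta}\in\mathbb{F}_q[z]^n$, contradicting $\underline{\theta}\notin A\mathbb{F}_q[z]^m+\mathbb{F}_q[z]^n$. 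Hence $\underline{x}_{k+1}\ne\underline{x}_k$ for arbitrarily large $k$, and the non-zero difference $\underline{w}:=\underline{x}_{k+1}-\underline{x}_k$ satisfies $\|\underline{w}\|\le q^{k+1}$ and, by the non-Archimedean triangle inequality, $|\langle A\underline{w}\rangle|\le q^{-k\omega'}$. For $k$ large this gives $|\langle A\underline{w}\rangle|\le\|\underline{w}\|^{-\omega'+\delta}$ for any fixed small $\delta>0$, contradicting $\omega(A)<\omega'$.

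The main technical obstacle is Step~1: one must state and apply the correct non-Archimedean successive-minima theorem and control the bilinear identity on $n$ linearly independent dual vectors simultaneously. Steps~2 and~3 are then essentially routine, the former being a standard Borel--Cantelli computation and the latter a short direct argument resting on the hypothesis $\underline{\theta}\notin A\mathbb{F}_q[z]^m+\mathbb{F}_q[z]^n$.
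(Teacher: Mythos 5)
Your Step~1 has the transference running in the wrong direction, and the claim it purports to establish is in fact false. You fix $\omega<\widehat{\omega}(A^T)$ and use the resulting \emph{existence} of short dual vectors $\underline{u}$ with $|\langle A^T\underline{u}\rangle|\le H^{-\omega}$ to manufacture a good inhomogeneous solution, concluding $\omega(A,\underline{\theta})\ge 1/\omega$ for every $\underline{\theta}$. But since $1/\omega>1/\widehat{\omega}(A^T)$, this would give a strict improvement over the stated lower bound, contradicting the almost-everywhere equality asserted in the very theorem you are proving (take $A=(\alpha)$ with $\widehat{\omega}((\alpha)^T)=1$ and $\omega=0.9$: your claim forces $\omega((\alpha),\theta)\ge 10/9$ for all $\theta$, whereas a.e.\ $\theta$ has $\omega((\alpha),\theta)=1$). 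The correct mechanism is the opposite one: one takes $\omega>\widehat{\omega}(A^T)$, so that for arbitrarily large $H$ there are \emph{no} non-zero $\underline{y}$ with $\|\underline{y}\|\le H$ and $M(\underline{y})<H^{-\omega}$, and then the transference lemma (Lemma~\ref{tranlemma}, built from Corollary~\ref{cor for tran} and the Cassels/Mahler successive-minima theorem) converts this \emph{absence} of short dual vectors into a good inhomogeneous approximation $\underline{x}$ with $\|\underline{x}\|\lesssim H^{\omega}$ and $|\langle A\underline{x}-\underline{\theta}\rangle|\lesssim H^{-1}$. Letting $\omega\downarrow\widehat{\omega}(A^T)$ then gives $\omega(A,\underline{\theta})\ge1/\widehat{\omega}(A^T)$. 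The existence of short dual vectors is, on the contrary, an \emph{obstruction} to inhomogeneous approximation (it is precisely what drives the upper bound in Step~2), so no device such as ``inverting an $n\times n$ system of short dual vectors'' can produce $\underline{x}\in\mathbb{F}_q[z]^m$; indeed the dimensions of such a system do not even match the $m$ unknowns of $\underline{x}$.

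Your Step~2 (bilinear identity $\underline{y}^TA\underline{x}=\underline{x}^TA^T\underline{y}$, then a Borel--Cantelli argument against the Haar measure) and your Step~3 (a contradiction via differences $\underline{x}_{k+1}-\underline{x}_k$ and the ultrametric inequality, using $\underline{\theta}\notin A\mathbb{F}_q[z]^m+\mathbb{F}_q[z]^n$ to rule out eventual constancy) are in line with the paper, though Step~2 is too sketchy to verify and the paper organises Step~3 around a sequence of best inhomogeneous approximations rather than dyadic scales. You should also note in Step~3 that if the differences $\underline{w}_k$ stayed bounded in norm one would get a non-zero $\underline{w}$ with $A\underline{w}\in\mathbb{F}_q[z]^n$, forcing $\omega(A)=+\infty$, which also yields the contradiction; as written the passage to $|\langle A\underline{w}\rangle|\le\|\underline{w}\|^{-\omega'+\delta}$ silently assumes $\|\underline{w}_k\|\to\infty$.
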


If the subgroup $G_A=A^T\mathbb{F}_q[z]^n+\mathbb{F}_q[z]^m$ of $\mathbb{F}_q((z^{-1}))^m$
has rank $\text{rk}_{\mathbb{F}_q[z]} (G_A)$ smaller than $m+n$,
then there exists $\underline{x}$ in $\mathbb{F}_q[z]^n$ with arbitrarily large norm
such that $|\langle A^T\underline{x}\rangle|=0$ and we have
$$
\widehat{\omega}(A^T) =  {\omega}(A^T) = + \infty .
$$
Throughout the paper, we avoid this degenerate case and
consider only matrices $A$ for which $\text{rk}_{\mathbb{F}_q[z]} (G_A)=m+n$.

Kim and Nakada \cite{Kim1} proved that, for any $\alpha$ in $\mathbb{I}$, we have
$$
\liminf_{n\to \infty}\left(q^n \min_{\deg Q=n}\|\{Q\alpha\}-\beta\|\right)=0,
$$
for almost all $\beta$ in $\mathbb{I}$.
In a subsequent paper \cite{Kim2}, the authors
complemented this result in showing that,
for any irrational power series $\alpha$ in $\mathbb{I}$, the set
$$
\left\{\beta\in \mathbb{I} \colon \liminf_{n\to \infty}
\left(q^n \min_{\deg Q=n}\|\{Q\alpha\}-\beta\|\right)>0\right\}
$$
has full Hausdorff dimension.
Our next result generalizes this statement to matrices of arbitrary dimension.
Before stating it, we introduce the following notations.

Let $m, n$ be positive integers
and $A$ in $\mathcal{M}_{n,m}(\mathbb{F}_q((z^{-1})))$.
For $\eps>0$, we define the set
$$
\Bad^{\eps} (A)
:= \{\underline{\theta}\in \mathbb{I}^n :  \liminf_{\underline{x}\in \mathbb{F}_q[z]^m,
\, \, \|\underline{x}\| \to \infty}
 \|\underline{x}\|^{m/n} \cdot |\langle A \underline{x} - \underline{\theta} \rangle| \ge \eps \}
$$
and we put
$$
\Bad(A) := \underset{\eps>0}{\bigcup} \, \Bad^\eps (A)
= \{\underline{\theta}\in \mathbb{I}^n :  \liminf_{\underline{x}\in \mathbb{F}_q[z]^m,
\, \, \|\underline{x}\| \to \infty}
\|\underline{x}\|^{m/n} \cdot |\langle A \underline{x} - \underline{\theta} \rangle|  > 0  \}.
$$
When $n=m=1$ and $A = (\alpha)$ we simply write $\Bad^{\eps} (\alpha)$
and $\Bad (\alpha)$ instead of
$\Bad^{\eps} (A)$ and $\Bad (A)$.

\begin{thm}
\label{prop}
Let $m, n$ be positive integers.
For any matrix $A$ in $\mathcal{M}_{n,m}(\mathbb{F}_q((z^{-1})))$,
the set $\Bad (A)$ has full Hausdorff dimension. More precisely,
there exists a continuous function $f : \mathbb{R}_+ \to \mathbb{R}_+$ such that $f(0) = 0$ and
the Hausdorff dimension of the set $\Bad^\eps (A)$ is at least $n - f(\eps)$,
for every positive $\eps\leq q^{-\frac{m}{n}-6}$.
\end{thm}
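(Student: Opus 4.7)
The plan is to construct, for each $\eps \le q^{-m/n-6}$, a compact Cantor-type subset $K \subset \Bad^\eps(A)$ whose Hausdorff dimension is at least $n-f(\eps)$ for some continuous $f$ with $f(0)=0$. I will first reduce to $A \in \mathcal{M}_{n,m}(\mathbb{I})$ by replacing $A$ by its entrywise fractional part, which does not change $|\langle A\underline{x}-\underline{\theta}\rangle|$. Let $s$ be the integer with $q^{-s-1} < \eps \le q^{-s}$ (so $s \ge m/n+6$), and pick a positive integer $N$ with $mN/n \in \mathbb{Z}$ (for example $N=n$). Set $\rho_k = q^{-kN}$ and build inductively $E_0 = \mathbb{I}^n$ and, for $k\ge 1$, $E_k \subset E_{k-1}$ by partitioning each ball $B \in E_{k-1}$ into its $q^{nN}$ sub-balls of radius $\rho_k$ and keeping only those disjoint from
\[
\mathcal{B}_k := \bigcup_{\underline{x}\in\mathcal{X}_k} B\bigl(\{A\underline{x}\},\,\eps\|\underline{x}\|^{-m/n}\bigr), \qquad \mathcal{X}_k := \{\underline{x} \in \mathbb{F}_q[z]^m : \rho_k < \eps\|\underline{x}\|^{-m/n} \le \rho_{k-1}\}.
\]
Since every $\underline{x}$ of sufficiently large norm lies in exactly one $\mathcal{X}_k$, any $\underline{\theta} \in K := \bigcap_k E_k$ will automatically belong to $\Bad^\eps(A)$.

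The key step will be to bound, for each parent ball $B \in E_{k-1}$, the number of its sub-balls discarded at step $k$. A direct global computation, using $\#\{\underline{x}:\deg\underline{x}=d\} \le q^{m(d+1)}$ and radius $\eps q^{-md/n}$ for the corresponding bad balls, shows that the total volume of $\mathcal{B}_k$ is bounded by $C_1 q^m \eps^n$. To transfer this into a per-parent estimate I will use a function-field geometry of numbers bound: for the unimodular lattice $\Lambda = \{(\underline{x},A\underline{x}+\underline{y}) : \underline{x}\in\mathbb{F}_q[z]^m,\,\underline{y}\in\mathbb{F}_q[z]^n\}$ in $\mathbb{F}_q((z^{-1}))^{m+n}$, the number of lattice points in a box $\{\|\underline{x}\|\le q^D,\,\|\cdot\|\le\rho\}$ is at most $C_2\max(1, q^{mD}\rho^n)$. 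Applying this with $\rho = \rho_{k-1}$ and summing over the bounded degree range defining $\mathcal{X}_k$, I obtain $\sum_{\underline{x}\in\mathcal{X}_k,\,\{A\underline{x}\}\in B}(\eps\|\underline{x}\|^{-m/n})^n \le C_3 q^m \eps^n \rho_{k-1}^n$, so dividing by $\rho_k^n$ bounds the number of sub-balls of $B$ removed by $C_3 q^m \eps^n \cdot q^{nN}$. The hypothesis $\eps \le q^{-m/n-6}$ gives $q^m\eps^n \le q^{-6n}$, making this loss a tiny fraction of the $q^{nN}$ sub-balls available.

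Hence each ball of $E_{k-1}$ will contain at least $q^{nN}(1 - C_3 q^m \eps^n)$ disjoint balls of $E_k$. The usual Hausdorff dimension bound for non-Archimedean Cantor constructions (e.g.\ mass distribution with uniform mass at each level, which is particularly clean thanks to the ball intersection property) then yields
\[
\hdim K \;\ge\; \frac{\log\bigl(q^{nN}(1 - C_3 q^m \eps^n)\bigr)}{\log q^N} \;=\; n + \frac{\log(1 - C_3 q^m \eps^n)}{N \log q} \;\ge\; n - f(\eps),
\]
with $f(\eps) := -\log(1 - C_3 q^m \eps^n)/(N \log q)$ continuous on $[0, q^{-m/n-6}]$ and $f(0) = 0$. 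The principal technical obstacle will be the per-parent counting: the purely volumetric bound is global and could a priori concentrate all bad volume in one parent ball, so the geometry-of-numbers input (resting on the non-degeneracy $\mathrm{rk}_{\mathbb{F}_q[z]} G_A = m+n$) is essential, and the "$+6$" margin in the hypothesis on $\eps$ provides the slack needed to absorb the absolute constants in that estimate.
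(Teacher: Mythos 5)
Your construction takes a genuinely different route from the paper. The paper works on the \emph{dual} side: it extracts a lacunary subsequence of the best approximation vectors $\underline{y}_i$ to $A^T$, builds a Cantor set of $\underline{\theta}$ avoiding neighbourhoods of the finitely many affine hyperplanes $\{\underline{\theta}:\underline{y}_i\cdot\underline{\theta}\in\mathbb F_q[z]\}$ (Lemma~6.1), and then converts this into membership in $\Bad^\eps(A)$ via the transference inequality (\ref{proofeq4}). Because what is removed at each level is a regularly-spaced family of hyperplane slabs coming from a \emph{single} vector $\underline{y}_{\varphi_l(i)}$, the per-parent removal count is automatic (exactly a fraction $q^{-1}$ of the children of each cell), and no information about the homogeneous approximation quality of $A$ is needed. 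You instead build the Cantor set directly on the target side, removing the balls $B(\{A\underline{x}\},\eps\|\underline{x}\|^{-m/n})$.

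The difficulty you flag at the end is a genuine gap, not a technicality that the "$+6$'' margin can absorb. Your per-parent step rests on the claim that the number of lattice points of the unimodular lattice $\Lambda=\{(\underline{x},A\underline{x}+\underline{y})\}$ in a box $\{\|\underline{x}\|\le q^D,\ \|\cdot\|\le\rho\}$ is $O(\max(1,q^{mD}\rho^n))$ with an absolute constant. This is false: the correct count involves all the successive minima of $\Lambda$ relative to that box shape and can exceed the volume by an arbitrarily large factor when $\Lambda$ has a short vector. Concretely, take $m=n=1$ and a convergent $P_k/Q_k$ of $\alpha$ with $\|Q_k\alpha-P_k\|=\|Q_{k+1}\|^{-1}$ very small (a Liouville-type $\alpha$, for which $\|Q_{k+1}\|\gg\|Q_k\|^{w}$ with $w$ huge). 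The multiples $x=jQ_k$ with $\|j\|\le\rho\|Q_{k+1}\|$ all satisfy $\|\{x\alpha\}\|\le\rho$; choosing $D$ with $q^D=\|Q_k\|\cdot\rho\|Q_{k+1}\|$ and $q^D\rho=1$, the box has volume $1$ but contains $\approx(\|Q_{k+1}\|/\|Q_k\|)^{1/2}$ lattice points, which is unbounded. In that regime the bad balls $B(\{jQ_k\alpha\},\eps\|jQ_k\|^{-1})$ cluster near $0$ with enough overlap that one can arrange a parent cell to be entirely covered, and since $\alpha$ may possess infinitely many such convergents, the problem recurs at infinitely many levels $k$. The hypothesis $\mathrm{rk}_{\mathbb F_q[z]}G_A=m+n$ only rules out \emph{exact} degeneracy and gives no uniform lower bound on the first minimum of $\Lambda$, so it does not rescue the estimate.

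So your argument as written establishes the theorem only for matrices $A$ that are not too well approximable (e.g.\ when $\omega(A)$ is finite, or more generally when the successive minima of $\Lambda$ at the relevant scales are controlled), whereas the statement must hold for all non-degenerate $A$. To repair it by your route you would need to replace the uniform per-parent bound by a scale-dependent pruning that tracks the successive minima of $\Lambda$ — which is essentially a disguised form of passing to best approximation vectors and is precisely what the paper's transference argument packages cleanly. The paper's dual construction sidesteps the counting obstacle entirely, at the cost of having to prove the transference inequality (\ref{proofeq4}) and Lemma~6.1 and of extracting the subsequence with the two-sided gap condition (\ref{prop proofeq1}).
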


If the sequence of the norms of the
best approximation vectors associated to $A$ (see Definition \ref{bestapp})
increases sufficiently rapidly, then
the above results can be strengthened as follows.
Similar results in the real case have been established in \cite{Bugeaud2}.

\begin{thm}
\label{higherdim}
Let $m, n$ be positive integers.
Let $A$ be in $\mathcal{M}_{n,m}(\mathbb{F}_q((z^{-1})))$ and $(\underline{y}_k)_{k\geq1}$
the sequence of best approximation vectors associated to $A$.
If $\|\underline{y}_k\|^{\frac{1}{k}}$ tends to infinity with $k$,
then there exists a positive real number $\varepsilon$ such that the set
$\Bad^\eps (A)$
has full Hausdorff dimension.
More precisely,
 $\varepsilon$ can be taken to be any positive real number less than $q^{-4-\frac{m}{n}}$.
Moreover,
if $m=n=1$, $A = (\alpha)$,
and the degree of the partial quotients in the continued fraction expansion of $\alpha$
in $\mathbb{F}_q((z^{-1}))$ tends to infinity, then
the set $\Bad^\eps (\alpha)$ has full Hausdorff dimension for every $\eps\le q^{-2}$.
\end{thm}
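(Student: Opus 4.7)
The plan is to strengthen the Cantor-like construction underlying the proof of Theorem~\ref{prop}, exploiting the super-geometric growth $\|\underline{y}_k\|^{1/k} \to \infty$ to obtain a tighter count of the obstructions removed at each scale. First I fix a positive $\eps < q^{-4-m/n}$ and build inductively a nested sequence $E_1 \supset E_2 \supset \cdots$ of non-empty unions of pairwise disjoint balls of a common radius $r_k$ in $\mathbb{I}^n$ such that $\bigcap_k E_k \subset \Bad^\eps(A)$. At stage $k$, inside each ball $B \in E_{k-1}$ I would excise the sub-balls $\{\underline{\theta} \in B : |\langle A\underline{x} - \underline{\theta}\rangle| < \eps\|\underline{x}\|^{-m/n}\}$ for every $\underline{x} \in \mathbb{F}_q[z]^m$ with $\|\underline{y}_{k-1}\| \le \|\underline{x}\| < \|\underline{y}_k\|$, and then decompose the remainder into balls of the new common radius $r_k$, chosen comparable to $\eps\|\underline{y}_k\|^{-m/n}$. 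The non-Archimedean ball intersection property makes both the excisions and the subdivisions entirely unambiguous, so the induction reduces to combinatorial bookkeeping.

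The decisive step is bounding the number of balls removed from each $B \in E_{k-1}$. Using the defining property of the best approximation vectors, any two distinct $\underline{x}, \underline{x}'$ in the specified range whose associated sub-balls both meet $B$ yield $\underline{x} - \underline{x}'$ with norm less than $\|\underline{y}_k\|$ and $|\langle A(\underline{x} - \underline{x}') \rangle| < 2\eps \|\underline{y}_{k-1}\|^{-m/n}$; the minimality of $\underline{y}_{k-1}$ then severely restricts the lattice of such differences. Together with the choice of $r_k$ above, this shows that the proportion of potential balls of radius $r_k$ inside each $B$ that survive is bounded below by a positive constant independent of $k$. A standard mass distribution argument on the limit set, exploiting $\|\underline{y}_k\|^{1/k} \to \infty$ to drive the branching ratio past any polynomial rate in $k$, then yields $\hdim \bigl( \bigcap_k E_k \bigr) = n$.

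For the one-dimensional case with $A = (\alpha)$, the best approximation vectors are the denominators $q_k$ of the continued fraction convergents of $\alpha$, satisfying $\|q_k \alpha - p_k\| = \|q_{k+1}\|^{-1}$, and the assumption that the degrees of the partial quotients tend to infinity is precisely $\|q_k\|^{1/k} \to \infty$. Between two consecutive convergents only the polynomial combinations $j q_k + q_{k-1}$ with $\deg j < \deg (q_{k+1}/q_k)$ need be considered, so every constant in the construction can be tracked explicitly, which yields the sharp threshold $\eps \le q^{-2}$. The main obstacle, in both cases, is the counting step in the previous paragraph: one must confirm that the best approximation property genuinely collapses the naive $\|\underline{y}_k\|^m$ candidate vectors $\underline{x}$ to a number compatible with the $n$-dimensional geometric refinement rate. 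This calls for a careful linear-algebraic argument over $\mathbb{F}_q[z]$, in the spirit of the real-case treatment in \cite{Bugeaud2}.
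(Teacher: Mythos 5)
Your proposal takes a genuinely different route from the paper's, but it has a gap at exactly the point you flag as the decisive step, and I do not think it can be patched as written.

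The paper's proof is entirely dual: the vectors $(\underline{y}_k)$ in the theorem are best approximations for $A^T$, living in $\mathbb{F}_q[z]^n$ and controlling $M(\underline{y}) = |\langle A^T\underline{y}\rangle|$. The Cantor set is built (via Lemma~\ref{higherdimenlemma}, applied to a suitably gappy subsequence $\underline{y}_{\varphi(k)}$) so as to avoid thin neighbourhoods of the \emph{resonant hyperplanes} $\{\underline{\theta} : \underline{y}_{\varphi(k)}\cdot\underline{\theta}\in\mathbb{F}_q[z]\}$; these are $(n-1)$-dimensional affine subspaces, so the counting at each stage is essentially linear algebra, and the dimension estimate comes from Lemma~\ref{dimension}. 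The passage to $\Bad^\eps(A)$ is then made through the transference inequality~(\ref{proofeq4}), $|\langle \underline{y}\cdot\underline{\theta}\rangle| \le \max\{\|\underline{y}\|\,|\langle A\underline{x}-\underline{\theta}\rangle|,\ \|\underline{x}\|\,M(\underline{y})\}$, together with part (ii) of Lemma~\ref{property of best appro}. In contrast, your construction works on the primal side: it excises, at each stage, the small balls $\{\underline{\theta}:|\langle A\underline{x}-\underline{\theta}\rangle|<\eps\|\underline{x}\|^{-m/n}\}$ around the points $\{A\underline{x}\}$ for all $\underline{x}$ in a norm annulus.

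The problem is the bookkeeping step you yourself identify as the obstacle. You appeal to ``the minimality of $\underline{y}_{k-1}$'' to thin out the set of $\underline{x}\in\mathbb{F}_q[z]^m$ whose ball meets a given $B$. But the best-approximation property of $\underline{y}_{k-1}$ controls $|\langle A^T\underline{y}\rangle|$ for $\underline{y}\in\mathbb{F}_q[z]^n$; it says nothing directly about $|\langle A(\underline{x}-\underline{x}')\rangle|$ for a difference $\underline{x}-\underline{x}'\in\mathbb{F}_q[z]^m$. These are two different (dual) homogeneous problems, and without a transference step of the kind the paper uses you cannot convert the one control into the other. Consequently, the proportion of surviving balls at each stage is not established, and the proposed mass-distribution argument has no input. (A naive volume count shows roughly a fixed $\eps^n$-fraction of $B$ is removed at each scale, which is not automatically compatible with full dimension unless one exploits the gaps $\|\underline{y}_k\|^{1/k}\to\infty$ in the way the paper does; your write-up does not supply that mechanism.) To repair this along the lines you sketch, you would need to either (a) introduce the dual best approximations for $A^T$ and prove a transference inequality connecting the two, at which point you have essentially reconstructed the paper's argument, or (b) give an honest counting lemma on the primal side, which is not present. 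The one-dimensional refinement you describe (using $jq_k+q_{k-1}$ between consecutive convergents) is plausible and matches the self-dual situation $A=A^T$, but it does not resolve the higher-dimensional gap.
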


Except for $(m, n) = (1, 1)$ (see the next section), we do not know whether the condition
``$\|\underline{y}_k\|^{\frac{1}{k}}$ tends to infinity with $k$'' is necessary to ensure that
$\Bad^\eps (A)$ has full Hausdorff dimension for some positive $\eps$.

The present paper is organized as follows.
In Section 2,
we give additional results in the one dimensional case,
including necessary and sufficient conditions to ensure
that the set ${\Bad}^{\,\varepsilon}(\alpha)$ has full Hausdorff dimension.
In Section 3, we present some auxiliary results.
A transference lemma is established in Section 4,
where we also give the proof of Theorem \ref{kronecker}.
The proofs of Theorem \ref{mainthm}, Theorem \ref{prop}, and Theorem \ref{higherdim} are given in
Section 5, Section 6, and Section 7, respectively.
We use similar arguments as in the real case.
In Section 8, we prove Theorem \ref{onedimthm}.
The proofs of Theorem \ref{main theorem2} and Theorem \ref{main theorem3}
are postponed to the last two sections.


\section{One dimensional case}

In the one dimensional case, Theorem \ref{higherdim} can be complemented as follows.

\begin{thm}
\label{main theorem2}
Let $\alpha$ be an irrational power series in $\mathbb{F}_q((z^{-1}))$ and
$Q_k$ the denominator of its $k$-th convergent for $k\ge 1$.
Then, there exists $\varepsilon>0$ such that the set ${\Bad}^{\,\varepsilon}(\alpha)$
has full Hausdorff dimension
if and only if  $\lim_{k\to\infty}\|Q_k\|^\frac{1}{k}=\infty$.
\end{thm}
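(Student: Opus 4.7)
The plan is to treat the two implications separately.

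The $\Leftarrow$ direction follows immediately from Theorem \ref{higherdim}. Indeed, for the $1\times 1$ matrix $A = (\alpha)$ the sequence of best approximation vectors introduced in Definition \ref{bestapp} is exactly the sequence of denominators $(Q_k)_{k \geq 1}$ of the convergents of $\alpha$; hence the hypothesis $\|\underline{y}_k\|^{1/k} \to \infty$ of Theorem \ref{higherdim} amounts to $\|Q_k\|^{1/k} \to \infty$, and the conclusion provides the required $\eps > 0$.

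For the $\Rightarrow$ direction we argue by contraposition. Suppose that $\|Q_k\|^{1/k}$ does not tend to infinity, so there exist $M > 1$ and a subsequence $(k_j)_{j\geq 1}$ with $\|Q_{k_j}\| \leq M^{k_j}$ for every $j$, equivalently $\deg Q_{k_j} \leq (\log_q M)\, k_j$. Fix $\eps > 0$; we must show $\hdim \Bad^{\,\eps}(\alpha) < 1$. For any $0 < \eps' < \eps$ one has the inclusion
$$
\Bad^{\,\eps}(\alpha) \subseteq \bigcup_{N \geq 1} F_N^{\,\eps'},
\qquad
F_N^{\,\eps'} := \bigl\{\theta \in \mathbb{I} : |\langle Q\alpha - \theta\rangle| \geq \eps'/\|Q\| \text{ for every } Q \in \mathbb{F}_q[z],\, \deg Q \geq N\bigr\},
$$
so by the countable stability of Hausdorff dimension it is enough to bound $\hdim F_N^{\,\eps'} < 1$ uniformly in $N$, for some fixed $\eps' < \eps$.

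The core step is a multiscale covering of $F_N^{\,\eps'}$. At each stage $k$, the fractional parts $\{Q\alpha\}$ with $\deg Q \leq \deg Q_k$ form a subgroup of $\mathbb{I}$ whose coset structure modulo the ball $B(0,q^{-\deg Q_k})$ is explicit via the best approximation property: every such ball in $\mathbb{I}$ contains exactly $q$ orbit points, themselves clustered within a sub-ball of radius $1/\|Q_{k+1}\|$. The conditions defining $F_N^{\,\eps'}$ therefore forbid a controlled fraction of each cell at scale $q^{-\deg Q_k}$. Iterating these exclusions across the stages $k_j$ and using the linear bound $\deg Q_{k_j} \leq (\log_q M)\, k_j$ to guarantee that many stages fit below each dyadic scale, we cover $F_N^{\,\eps'}$ by $N_j$ balls of radius $r_j = q^{-\deg Q_{k_j}}$ satisfying $\log N_j / \log(1/r_j) \leq 1 - \kappa(\eps') < 1$ uniformly in $j$, whence $\hdim F_N^{\,\eps'} < 1$.

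The main obstacle is the nonuniformity across stages: when some $\deg a_{k+1}$ is large, the $q$ orbit points inside a cell of scale $q^{-\deg Q_k}$ are tightly clustered and the corresponding forbidden balls coalesce, reducing the per-cell removal from approximately $q\eps'$ to approximately $\eps'$. A careful case analysis depending on the relative sizes of $\|a_{k_j+1}\|$ and $1/\eps'$, combined with the averaged growth bound $\deg Q_{k_j} \leq (\log_q M)\, k_j$, is required to ensure that the cumulative removal across the stages $k_j$ yields a dimension drop $\kappa(\eps') > 0$ which, although it may depend on $\eps'$, is strictly positive for every $\eps > 0$.
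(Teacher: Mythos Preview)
Your $\Leftarrow$ direction is correct and matches the paper's own argument via Theorem \ref{higherdim}.

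For the $\Rightarrow$ direction, the contrapositive strategy and the decomposition $\Bad^{\eps}(\alpha)\subset\bigcup_N F_N^{\eps'}$ are the right start, but what follows is only a sketch with a real gap. Two issues:
\begin{itemize}
\item The structural claim that ``every ball of radius $q^{-\deg Q_k}$ contains exactly $q$ orbit points, clustered in a sub-ball of radius $1/\|Q_{k+1}\|$'' is not substantiated and is not accurate as stated; the distribution of $\{Q\alpha\}$ at scale $q^{-\deg Q_k}$ depends on the full Ostrowski digit pattern, not just on $k$.
\item More seriously, the key inequality $\log N_j/\log(1/r_j)\le 1-\kappa(\eps')$ is merely asserted, and your final paragraph explicitly defers the ``careful case analysis'' needed when $\deg A_{k+1}$ is large. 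That case analysis \emph{is} the proof, not a side detail. Your choice of subsequence $(k_j)$, defined only by $\|Q_{k_j}\|\le M^{k_j}$, gives no control on the gaps $\deg Q_{k_{j+1}}-\deg Q_{k_j}$, which is precisely what governs the per-stage removal fraction; without that control the obstacle you identify cannot be resolved along the lines you indicate.
\end{itemize}

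The paper sidesteps this obstacle entirely by working with Ostrowski-expansion cylinders $I(\sigma_1,\dots,\sigma_k)$ (Lemmas \ref{lemma2}--\ref{length of cylinder}) rather than generic balls, and by choosing the scale sequence $(k_i)$ according to the criterion $\deg Q_{k_{i+1}}-\deg Q_{k_i}>t+4$ where $\eps=q^{-t}$. In each cylinder of order $k_i$ one then counts at least $q^{\,n_{k_{i+1}}-n_{k_i}-t}$ orbit points $\{Q\alpha\}$ with $n_{k_i}\le\deg Q\le n_{k_{i+1}}-t$, pairwise separated by at least $q^{-n_{k_{i+1}}}$; their forbidden neighbourhoods therefore delete a \emph{uniform} fraction $q^{-t}$ of every cylinder at every stage, with no case analysis whatsoever. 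Since the gaps $k_{i+1}-k_i$ are uniformly bounded, the hypothesis $\liminf_k k^{-1}\log\|Q_k\|<\infty$ transfers to the subsequence, and a direct Hausdorff-measure computation gives $\dim_H\le 1+\log(1-q^{-t})/\log M<1$. The Ostrowski cylinder structure is exactly what makes the orbit-point count and separation clean; this is the missing ingredient in your sketch.
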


In addition, 
we give a third condition equivalent to those occurring in Theorem \ref{main theorem2}. 
For an irrational power series $\alpha$ in $\mathbb{I}$ and a positive real number $c$,
let $\Delta_{N, c} (\alpha)$ denote the number of integers $l$ in $ \{1,\cdots,N\}$
for which the inequality $ \|\{Q\alpha\}\|\le c2^{-l} $
has a solution $ Q$ in $\mathbb{F}_q[z]$ with $0<\|Q\|\le 2^l$.
Then, the power series $\alpha$ is called singular on average if, for every $c>0$,
we have $\lim_{N\to\infty}\frac{1}{N}\Delta_{N,c} (\alpha)=1$.
As far as we are aware, this notion has been introduced in \cite{KKLM}.

\begin{thm}
\label{main theorem3}
Let $\alpha$ be an irrational power series.
There exists $\varepsilon>0$ such that the set ${\Bad}^{\,\varepsilon}(\alpha)$
has full Hausdorff dimension
if and only if $\alpha$ is singular on average.
\end{thm}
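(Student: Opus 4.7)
The plan is to reduce Theorem \ref{main theorem3} to Theorem \ref{main theorem2} by proving that $\alpha$ is singular on average if and only if $\lim_{k\to\infty}\|Q_k\|^{1/k}=\infty$. Set $l_k:=\log_2\|Q_k\|$ and $g_k:=l_{k+1}-l_k$; the bound $\deg a_{k+1}\ge 1$ on the partial quotients yields $g_k\ge\log_2 q$. The main input is the standard best-approximation property of continued fraction convergents in $\mathbb{F}_q((z^{-1}))$, which I would record as a preliminary lemma: for every integer $l\ge 1$,
$$
\min_{\substack{Q\in\mathbb{F}_q[z]\setminus\{0\} \\ \|Q\|\le 2^l}}\|\{Q\alpha\}\|\;=\;2^{-l_{k+1}},
$$
where $k=k(l)$ is the unique index with $l_k\le l<l_{k+1}$. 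Setting $T:=\log_2(1/c)$ for $c\in(0,1)$, an integer $l$ contributes to $\Delta_{N,c}(\alpha)$ precisely when $l_{k+1}\ge l+T$; equivalently, the ``bad'' indices (those not counted) form the disjoint union, over $k\ge 0$, of the integer points of $(\max(l_k,\,l_{k+1}-T),\,l_{k+1})$, an interval of length $\min(T,g_k)$.

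For the forward direction, assume $l_k/k\to\infty$ and let $K=K(N)$ be the largest index with $l_K\le N$. Then $K/N\le K/l_K\to 0$, so for every fixed $T>0$ the number of bad $l$ in $\{1,\ldots,N\}$ is at most $(K+1)(T+1)=o(N)$. Hence $\Delta_{N,c}(\alpha)/N\to 1$ for every $c>0$, and $\alpha$ is singular on average.

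For the converse, I argue contrapositively. Suppose $\liminf_k l_k/k\le M<\infty$ and pick a subsequence $(k_j)$ with $l_{k_j}\le Mk_j$. Take $T:=2M$, equivalently $c:=2^{-2M}$. From $\sum_{k<k_j}g_k=l_{k_j}\le Mk_j$, a Markov-type pigeonholing forces at least $k_j/2$ of the gaps to satisfy $g_k\le 2M=T$; each such interval $[l_k,l_{k+1})$ then lies entirely in the bad set and, having length $g_k\ge\log_2 q\ge 1$, contains at least one integer. Summing yields a bad count $\gtrsim k_j\gtrsim l_{k_j}/M$ inside $\{1,\ldots,\lfloor l_{k_j}\rfloor\}$, so $\Delta_{N_j,c}(\alpha)/N_j$ stays bounded away from $1$ along $N_j:=\lfloor l_{k_j}\rfloor$, proving $\alpha$ is not singular on average.

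The main technical nuisance I anticipate is reconciling the integer variable $l$ in the definition of $\Delta_{N,c}$ with the real quantities $l_k=(\log_2 q)\deg Q_k$ and with the quantization $\|Q\|\in q^{\mathbb{Z}}$ (so that $\|Q\|\le 2^l$ really means $\deg Q\le\lfloor l/\log_2 q\rfloor$). These introduce $O(1)$ bookkeeping errors per interval $[l_k,l_{k+1})$, which are absorbed by the bound $g_k\ge\log_2 q$ and only mildly affect the constants $c$ and $M$ above; they must, however, be tracked explicitly, especially when $q=2$.
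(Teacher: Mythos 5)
Your proposal is correct and follows essentially the same route as the paper: both reduce Theorem \ref{main theorem3} to Theorem \ref{main theorem2} by proving that $\alpha$ is singular on average if and only if $\|Q_k\|^{1/k}\to\infty$, with the best-approximation property of the convergents $Q_k$ (so that $\min_{0<\|Q\|\le X}\|\{Q\alpha\}\|=\|Q_{k+1}\|^{-1}$ for $\|Q_k\|\le X<\|Q_{k+1}\|$) as the engine driving the count of the integers $l$ entering $\Delta_{N,c}(\alpha)$, and the forward direction is identical. The only organizational difference is in the converse: the paper works directly with the fixed constant $c=q^{-3}$ and counts bad integers in the pairwise disjoint intervals $[\log_2\|Q_{j+1}\|-2\log_2 q,\log_2\|Q_{j+1}\|)$, whereas you argue contrapositively and use a Markov/pigeonhole bound on the gaps $g_k$ with a constant $c=2^{-2M}$ depending on the liminf bound $M$; both are sound, and the $O(1)$ endpoint/quantization issues you flag (integer $l$ versus $l_k=(\deg Q_k)\log_2 q$, and whether the half-open bad interval captures an integer when $g_k=T$, in particular for $q=2$) are genuine but, as you say, absorbed by slightly enlarging $T$.
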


Theorems \ref{main theorem2} and \ref{main theorem3} are the power series analogues of
Theorem 1.1 of \cite{Bugeaud2}. 
In the proof of Theorem \ref{main theorem2}, our method
is different: we replace the use of the three distance theorem in \cite{Bugeaud2} by
that of Ostrowski expansions, see Theorem \ref{Ostro} and its proof.
Theorem \ref{main theorem3} is proved in a similar way as in the real case. 

Our last result gives additional information about the relation between the exponents of homogeneous
and inhomogeneous Diophantine approximation in dimension one.
Its first statement has already been established in Theorem \ref{mainthm}.

\begin{thm}
\label{onedimthm}
Let $\xi$ in $ \mathbb{F}_q((z^{-1}))$ be an irrational power series.
For any element $\theta$ in $\mathbb{F}_q((z^{-1}))$ not in $\mathbb{F}_q[z]+\xi\mathbb{F}_q[z]$,
we have
$$
\frac{1}{\omega((\xi))}\le \widehat{\omega}((\xi),\theta)\le\omega((\xi)).
$$
Let $\omega$ denote $+\infty$ or a real number greater than or equal 1,
then there exists a $\xi$ in $\mathbb{F}_q((z^{-1}))$ for which $\omega((\xi))=\omega$
and the set of values taken by the function $\widehat{\omega}((\xi),\cdot)$ is equal to the interval
$\left[\frac{\displaystyle1}{\displaystyle\omega},\omega\right]$.
\end{thm}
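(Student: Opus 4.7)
The first assertion is a direct specialization of Theorem \ref{mainthm} to $n=m=1$ with $A=(\xi)$: since $A^T=A$, the inequality $\widehat{\omega}((\xi),\theta) \ge 1/\omega((\xi))$ is the second half of (\ref{theorem inequalities}), and the hypothesis $\theta \notin \mathbb{F}_q[z]+\xi\mathbb{F}_q[z]$ coincides with $\underline{\theta} \notin A\mathbb{F}_q[z]^m+\mathbb{F}_q[z]^n$, so the upper bound $\widehat{\omega}((\xi),\theta) \le \omega((\xi))$ is exactly the last displayed inequality of Theorem \ref{mainthm}. Nothing more is needed for this part.

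For the second assertion, fix $\omega \in [1,+\infty]$. The plan is to construct a single irrational $\xi$ with $\omega((\xi))=\omega$ and, for each target value $v \in [1/\omega,\omega]$, a power series $\theta_v$ satisfying $\widehat{\omega}((\xi),\theta_v)=v$. First, choose $\xi=[0;a_1,a_2,\ldots]$ by specifying the degrees of its partial quotients so that the convergent denominators satisfy $\limsup_{k\to\infty} \deg Q_{k+1}/\deg Q_k = \omega$; the exact identity $\|Q_k\xi-P_k\|=\|Q_{k+1}\|^{-1}$ valid in $\mathbb{F}_q((z^{-1}))$ then yields $\omega((\xi))=\omega$. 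Second, for each admissible $v$, prescribe $\theta_v$ through its Ostrowski expansion with respect to $\xi$ (see Theorem \ref{Ostro}): its polynomial coefficients $c_k^{(v)}$ are taken nonzero only along a fast growing subsequence of indices $(k_j)$, with $\deg c_{k_j}^{(v)}$ interpolating between $0$ (which forces $v=\omega$) and $\deg a_{k_j+1}-1$ (which recovers the generic value $v=1/\omega$). Infinitely many nonzero $c_{k_j}^{(v)}$ ensure, by the uniqueness of the Ostrowski expansion, that $\theta_v \notin \mathbb{F}_q[z]+\xi\mathbb{F}_q[z]$, and the truncations of the expansion produce, for every sufficiently large $H$, a polynomial $Q$ with $\|Q\|\le H$ satisfying $|\langle Q\xi-\theta_v\rangle|\le H^{-v}$, so that $\widehat{\omega}((\xi),\theta_v)\ge v$.

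The main obstacle is the matching upper bound $\widehat{\omega}((\xi),\theta_v)\le v$: one must rule out, at the heights $H$ just below $\|Q_{k_{j+1}}\|$, the existence of any polynomial $Q$ with $\|Q\|\le H$ achieving $|\langle Q\xi-\theta_v\rangle|\le H^{-v-\delta}$ for some fixed $\delta>0$. The key step is that, by the uniqueness of the Ostrowski expansion, such a $Q$ would have to be a specific polynomial combination of the denominators $Q_k$ whose corresponding quantity $\{Q\xi-\theta_v\}$ equals in norm a prescribed tail of the Ostrowski expansion of $\theta_v$; the construction of the coefficients $c_k^{(v)}$ then yields a sharp lower bound on this tail and excludes improvements. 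Here the ball intersection property of the non-archimedean norm on $\mathbb{F}_q((z^{-1}))$ replaces and simplifies the three-distance arguments used in \cite{Bugeaud2}. The endpoint $\omega=+\infty$ is handled by letting $\deg a_{k+1}$ grow super-polynomially in $\deg Q_k$, while the value $v=1/\omega$ is already obtained from the almost-everywhere statement of Theorem \ref{mainthm} applied to any generic $\theta$; continuity of $v \mapsto \theta_v$ under the chosen parameterization of the $c_k^{(v)}$ fills in every intermediate value.
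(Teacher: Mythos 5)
Your treatment of the first assertion is exactly the paper's: both inequalities are immediate from Theorem~\ref{mainthm} with $n=m=1$, $A=(\xi)$, and the authors say no more about it.

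For the second assertion you have identified the right high-level plan (choose $\xi$ with prescribed homogeneous exponent via the growth of $\deg Q_n$, then build $\theta_\nu$ by prescribing an Ostrowski-type series $\sum u_k(Q_k\xi-P_k)$), and your observation that $\deg u_n = 0$ forces the exponent towards $\omega$ while $\deg u_n$ close to $\deg A_{n+1}$ forces it towards $1/\omega$ is qualitatively correct. But there are three substantive discrepancies. First, the paper does not use a sparse subsequence: it takes $u_n$ nonzero for \emph{every} $n$, with the precise calibration $\|Q_n\|^{(\omega_n-\nu)/(\nu+1)}\le\|u_n\|<q\|Q_n\|^{(\omega_n-\nu)/(\nu+1)}$, which is tuned so that the partial sums $V_n=\sum_{k\le n}u_kQ_k$ have $\|V_n\|\asymp\|Q_n\|^{(\omega_n+1)/(\nu+1)}$ and the error $\|V_n\xi-W_n-\theta\|$ is comparable to $\|V_{n+1}\|^{-\nu}$ \emph{uniformly in the scale}; a sparse-coefficient construction would have to be justified separately at the intermediate heights between $\|V_j\|$ and $\|V_{j+1}\|$, and it is not at all clear that it works for $1/\omega\le\nu<1$, where $\deg u_n$ must be close to $\deg A_{n+1}$ for the lower bound to hold. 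Second, and more seriously, your sketch of the upper bound $\widehat{\omega}((\xi),\theta_\nu)\le\nu$ elides all the content. Saying that a putative good $Q$ ``would have to be a specific polynomial combination of the $Q_k$'' by uniqueness of the Ostrowski expansion is vacuous — every polynomial is such a combination. The paper's argument actually subtracts off $V_{n-1}$, writes $x-V_{n-1}=aQ_{n-1}+bQ_n$ using the unimodular change of basis given by $P_{k-1}Q_k-P_kQ_{k-1}=\pm1$, bounds $\|b\|\le q^{-1}\|u_n\|$ from the hypotheses, and then splits into the cases $a\ne0$ (where the term $a(Q_{n-1}\xi-P_{n-1})$ dominates the tail of the series) and $a=0$ (where $(u_n-b)(Q_n\xi-P_n)$ dominates because $\|b\|<\|u_n\|$). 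None of this is visible in your proposal, and it is where all the work lies. Third, the closing remark that ``continuity of $v\mapsto\theta_v$ fills in every intermediate value'' is both unnecessary and unjustified: $\widehat{\omega}((\xi),\cdot)$ has no continuity property to exploit, and the construction gives each value of $\nu$ directly, so no intermediate-value argument is wanted. As written, the proposal is an outline with the crucial upper bound left unproved and with a construction that differs from the paper's in a way that would need its own verification.
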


Theorem \ref{onedimthm} is the power series analogue of Proposition 8 of \cite{Bugeaud3}
and its proof uses similar arguments.


\section{Preliminaries}

In this section, we brief\/ly recall some notations and classical results which
will be used later in the proofs
of our theorems.

In the setting of formal power series,
every irrational element $\alpha$ in $\mathbb{I}$
has a unique infinite continued fraction expansion over the field $\mathbb{F}_q((z^{-1}))$,
which is induced by the map
$$
T\alpha=\frac{1}{\alpha}-\left[\frac{1}{\alpha}\right].
$$
The reader is referred to Artin \cite{Artin} or Berth\'{e} and Nakada \cite{Nakada} for more details.
For every irrational power series $\alpha$ in $\mathbb{I}$,
we denote by $\alpha=[0; A_1,A_2,\cdots] $ its continued fraction expansion,
where $A_k=A_k(\alpha)\colon=[\frac{1}{T^{k-1}\alpha}]$
is called the $k$-th partial quotient of $\alpha$.
For each $k\ge1$,
${P_k (\alpha)} / {Q_k (\alpha)}=[0, A_1,A_2,\cdots,A_k]$ is the $k$-th convergent of $\alpha$. 
This defines $P_k (\alpha)$ and $Q_k (\alpha)$ up to a common multiplicative factor. 
To define numerator and denominator of the $k$-th convergent of $\alpha$, we set 
$P_{-1}(\alpha)=Q_0 (\alpha)= 1$ and $Q_{-1}(\alpha)=P_0 (\alpha)= 0$,
and, for any $k \ge 0$, 
\begin{equation*}
\begin{split}
&P_{k+1}(\alpha)=A_{k+1}(\alpha)P_k(\alpha)+P_{k-1}(\alpha),\\
&Q_{k+1}(\alpha)=A_{k+1}(\alpha)Q_k(\alpha)+Q_{k-1}(\alpha).
\end{split}
\end{equation*}

The following elementary properties of continued fraction expansions of formal power
series are well-known (see Fuchs \cite{Fuchs} for details).

\begin{lemma}
[\cite{Fuchs}] \label{lemma fuchs}
Under the above notation, we have for $k \ge 1$:

\begin{enumerate}
\item[(1)] $\left(P_k(\alpha),Q_k(\alpha)\right)=1$,
\item[(2)] $1=\|Q_0(\alpha)\|<\|Q_1(\alpha)\|<\|Q_2(\alpha)\|<\cdots$,
\item[(3)] $\|Q_k(\alpha)\|=\Pi_{i=1}^k\| A_{i}(\alpha)\|$,
\item[(4)] $P_{k-1}(\alpha)Q_k(\alpha)-P_k(\alpha)Q_{k-1}(\alpha)=(-1)^k$.
\end{enumerate}
\end{lemma}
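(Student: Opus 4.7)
The plan is to establish items (4), (1), (3), (2) in this order; each step rests on the recurrences $P_{k+1}=A_{k+1}P_k+P_{k-1}$ and $Q_{k+1}=A_{k+1}Q_k+Q_{k-1}$ together with the initial conventions $P_{-1}=Q_0=1$, $P_0=Q_{-1}=0$, while the norm statements (2) and (3) additionally exploit the non-Archimedean (ultrametric) nature of $\|\cdot\|$ recalled in the introduction.

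First I would prove (4) by induction on $k$. For the base case $k=1$, the recurrences yield $P_1=A_1 P_0+P_{-1}=1$ and $Q_1=A_1 Q_0+Q_{-1}=A_1$, so $P_0 Q_1-P_1 Q_0 = 0\cdot A_1 - 1\cdot 1 = -1 = (-1)^1$. For the inductive step, substituting the recurrences into $P_k Q_{k+1}-P_{k+1}Q_k$ makes the $A_{k+1}$ contributions cancel and leaves
$$
P_k Q_{k+1}-P_{k+1}Q_k = P_k Q_{k-1}-P_{k-1}Q_k = -(-1)^k = (-1)^{k+1}.
$$
Once (4) is in hand, (1) is immediate: any common polynomial factor of $P_k$ and $Q_k$ divides the right-hand side of (4), which is a unit in $\mathbb{F}_q[z]$, so the $\gcd$ is $1$.

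For (3) and (2) I would first observe that because $\alpha$ is irrational, $T^{i-1}\alpha$ belongs to $\mathbb{I}\setminus\{0\}$ for every $i\ge 1$, so $\|1/T^{i-1}\alpha\|>1$ and hence its integral part $A_i(\alpha)$ satisfies $\|A_i(\alpha)\|\ge q$. Then I would run a simultaneous induction on (2) and (3). Assuming $\|Q_k\|=\prod_{i=1}^k\|A_i\|$ and $\|Q_k\|>\|Q_{k-1}\|$, multiplicativity of $\|\cdot\|$ and the bound $\|A_{k+1}\|\ge q$ yield
$$
\|A_{k+1}Q_k\|=\|A_{k+1}\|\cdot\|Q_k\|\ge q\|Q_k\|>\|Q_k\|>\|Q_{k-1}\|.
$$
The strong triangle inequality then forces $\|Q_{k+1}\|=\|A_{k+1}\|\cdot\|Q_k\|$, which at once upgrades the product formula in (3) from $k$ to $k+1$ and extends the strict monotonicity in (2).

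Since the whole argument is formal—driven only by the recurrences, the ultrametric inequality, and the elementary fact that partial quotients have strictly positive degree—the only genuine obstacle is bookkeeping around the base case and around the nonstandard index conventions $P_{-1}=Q_0=1$, $P_0=Q_{-1}=0$, which must be set up carefully so that both the identity in (4) and the product in (3) are correctly initialized at $k=1$.
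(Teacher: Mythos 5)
Your proof is correct. The paper itself gives no argument for this lemma---it is quoted from Fuchs---so there is nothing internal to compare against; your treatment (induction via the recurrences for item (4), unit determinant for item (1), the bound $\|A_i\|\ge q$ plus the ultrametric equality $\|A_{k+1}Q_k+Q_{k-1}\|=\|A_{k+1}\|\,\|Q_k\|$ for items (2) and (3)) is exactly the standard argument in the function-field setting, and your bookkeeping of the conventions $P_{-1}=Q_0=1$, $P_0=Q_{-1}=0$ is consistent with the paper's.
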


We also need a version of Dirichlet's theorem in the fields of formal power series.
The next statement follows from Theorem 2.1 of \cite{GaGh17}.

\begin{thm}
\label{Minkowski2}
Let $m, n$ be positive integers.
Let $A$ be in $\mathcal{M}_{n,m}(\mathbb{F}_q((z^{-1})))$.
Then, for any positive integer $c$,
there is a non-zero polynomial vector $\underline{u}$ such that
$$
|\langle A \underline{u} \rangle| < q^{-c\frac{m}{n}}
\;\;\;\text{and}\;\;\;
1\le \| \underline{u}\|\le q^c.
$$

\end{thm}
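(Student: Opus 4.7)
The plan is to adapt Dirichlet's pigeonhole argument to the non-Archimedean setting of $\mathbb{F}_q((z^{-1}))$, using the ball intersection property recalled in the introduction. Because the norm takes only values in $\{0\}\cup q^{\mathbb{Z}}$, the argument is cleaner than over $\mathbb{R}$ and directly produces the strict inequality in the statement.

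First I would count the domain and partition the codomain. The set $V_c:=\{\underline{u}\in\mathbb{F}_q[z]^m:\|\underline{u}\|\le q^c\}$ consists of vectors whose coordinates are polynomials of degree at most $c$, and therefore has cardinality $q^{(c+1)m}$. On the other hand, for every non-negative integer $r$, the ball intersection property together with the identity $\mu(B(\cdot,q^{-r}))=q^{-r}$ shows that $\mathbb{I}^n$ is the disjoint union of exactly $q^{rn}$ balls of radius $q^{-r}$. I would then apply the pigeonhole principle to the map $\phi:\underline{u}\mapsto\{A\underline{u}\}\in\mathbb{I}^n$ (componentwise fractional part), taking $r=\lfloor cm/n\rfloor$. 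Since $rn\le cm<(c+1)m$, one has $|V_c|>q^{rn}$, so there exist distinct $\underline{u}_1,\underline{u}_2\in V_c$ whose images $\phi(\underline{u}_1),\phi(\underline{u}_2)$ lie in the same ball.

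To conclude, I would set $\underline{u}=\underline{u}_1-\underline{u}_2$ and $\underline{y}=[A\underline{u}_1]-[A\underline{u}_2]\in\mathbb{F}_q[z]^n$. Then $\underline{u}\ne 0$, $\|\underline{u}\|\le q^c$, and $\|A\underline{u}-\underline{y}\|<q^{-r}$. By the discreteness of the value group this strict inequality strengthens to $\|A\underline{u}-\underline{y}\|\le q^{-r-1}$, and the choice of $r$ gives $r+1>cm/n$, yielding $|\langle A\underline{u}\rangle|\le q^{-r-1}<q^{-cm/n}$, as required; the lower bound $\|\underline{u}\|\ge 1$ is immediate from the fact that a non-zero polynomial has norm at least $q^0=1$. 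There is no substantive obstacle: the whole argument is a single pigeonhole. The only mild subtlety is arranging the arithmetic so that both the pigeonhole inequality $rn<(c+1)m$ and the target inequality $r+1>cm/n$ hold simultaneously, and $r=\lfloor cm/n\rfloor$ does this uniformly in $c$, $m$, $n$; the gain from converting $<q^{-r}$ into $\le q^{-r-1}$ is precisely what delivers the strict bound in the statement.
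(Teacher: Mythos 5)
Your proof is correct. The paper itself does not prove this statement but instead cites Theorem 2.1 of Ganguly--Ghosh \cite{GaGh17}; your argument supplies a self-contained pigeonhole proof of exactly the kind one would expect to underlie such a citation. The counting is right: $V_c$ has cardinality $q^{(c+1)m}$, and $\mathbb{I}^n$ is partitioned by the ball intersection property into exactly $q^{rn}$ balls $B(\cdot,q^{-r})$, so with $r=\lfloor cm/n\rfloor$ one has $rn\le cm<(c+1)m$ and pigeonhole applies. The upgrade from the open-ball condition $\|\cdot\|<q^{-r}$ to $\|\cdot\|\le q^{-r-1}$ via discreteness of the value group, combined with $r+1>cm/n$ (which holds for every real $x$ since $\lfloor x\rfloor+1>x$), gives the strict inequality $|\langle A\underline{u}\rangle|<q^{-cm/n}$ required in the statement. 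The bounds $\underline{u}\neq 0$, $\|\underline{u}\|\ge 1$, and $\|\underline{u}\|\le q^c$ by the ultrametric inequality are all handled correctly.
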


In dimension greater than one,
we deal with sequences of vectors having similar properties as the sequence of convergents
in dimension one.
For this purpose, for a matrix $A=(\alpha_{i,j})_{1 \le i \le n, 1 \le j \le m}$,
we denote by
$$
M_j(\underline{y})=\sum_{i=1}^n\alpha_{ij}y_i,\;\;\;\underline{y}=(y_1, \cdots, y_n)^T, \;\;\;(1\le j\le m)
$$
the linear forms determined by its columns.
Then, for $\underline{y}$ in $\mathbb{F}_q((z^{-1}))^n$, we set
$$
M(\underline{y})=\max_{1\le j\le m}|\langle M_j(\underline{y})\rangle|=|\langle A^T\underline{y}\rangle|.
$$

\begin{dfn}
\label{bestapp}
For a sequence of polynomial vectors $(\underline{y}_i)_{i\ge 1}$,
write
$$
\|\underline{y_i}\|=Y_i,\;\;\; M_i=M(\underline{y}_i).
$$
If the sequence satisfies
$$
1=Y_1<Y_2<\cdots ,  \;\;\;\ M_1>M_2>\cdots
$$
and
$M(\underline{y})\ge M_i$ for all non-zero polynomial vectors $\underline{y}$
of norm $\|\underline{y}\|<Y_{i+1}$,
then it is called a sequence of best approximations related to the matrix $A^T$ (or to the linear forms
$M_1,M_2,\cdots,M_m)$.
\end{dfn}

Now we construct inductively a sequence of best approximations related to the matrix $A^T$.

Let $Y_1=\|\underline{y}_1\|=1$,
and $M(\underline{y})\ge M(\underline{y_1})=M_1$
for any polynomial vector $\underline{y}$ in $ \mathbb{F}_q[z]^n$
with $\|\underline{y}\|=1$.

Suppose that $\underline{y}_1, \cdots,\underline{y}_i$ have already been constructed in such a way
that $M(\underline{y})\ge M_i$ for all non-zero polynomial vectors $\underline{y}$
with $\|\underline{y}\|\le Y_i$.
Let $Y$ be the smallest integer power of $q$ greater than $Y_i$ and for which
there exists a polynomial vector $\underline{z}$ with $\|\underline{z}\|=Y$ and $M(\underline{z})<M_i$.
Since $M_i$ is positive, the integer $Y$ does exist by Theorem \ref{Minkowski2}.
Among those points $\underline{z}$,
we select an element $\underline{y}$ for which $M(\underline{z})$ is minimal.
Then we set
$$
\underline{y}_{i+1}=\underline{y},\;Y_{i+1}=Y,\;\text{and}\;M_{i+1}=M(\underline{y}).
$$
The sequence $(\underline{y}_i)_{i \ge 1}$ constructed in this way enjoys the desired properties.

The following two lemmas collect some properties of the sequence of best approximations.
\begin{lemma}
\label{property of best appro}
Let $(\underline{y}_i)_{i\geq1}$ be the sequence of best approximations related to the linear forms
$M_1, \cdots,M_m$. Then we have
\begin{itemize}
  \item[(i)] $Y_i\ge q^i$, for $i \ge 1$.
  \item[(ii)] $M_i <  q^{\frac{n}{m}} \, Y_{i+1}^{-\frac{n}{m}}$, for $i \ge 1$.
  \item[(iii)] For $\omega<\widehat{\omega}(A^T)$, $M_i\leq  Y_{i+1}^{-\omega}$
  holds for any sufficiently large $i$.
  \item[(iv)] For $\omega<\omega(A^T)$, $M_i\leq Y_i^{-\omega}$ holds for infinitely many $i$.
  \end{itemize}
\end{lemma}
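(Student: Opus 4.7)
The plan is to verify the four assertions in sequence, each being a short consequence of the construction of the best approximation sequence $(\underline{y}_i)$ together with either Theorem \ref{Minkowski2} or the definitions of the exponents $\omega(A^T)$ and $\widehat{\omega}(A^T)$. For (i), I would simply note that each $Y_i$ is a power of $q$ by construction and the sequence $Y_1<Y_2<\cdots$ is strictly increasing, so consecutive terms differ by a factor at least $q$; combined with $Y_1=1$ this yields the claimed geometric lower bound on $Y_i$.

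For (ii), I would apply Theorem \ref{Minkowski2} to the transposed matrix $A^T$ (swapping the roles of $m$ and $n$), choosing the integer $c$ so that $q^c$ is the largest power of $q$ strictly less than $Y_{i+1}$; in particular $q^c\ge Y_{i+1}/q$. The theorem produces a non-zero polynomial vector $\underline{u}\in\mathbb{F}_q[z]^n$ satisfying $\|\underline{u}\|\le q^c<Y_{i+1}$ and $|\langle A^T\underline{u}\rangle|<q^{-cn/m}\le q^{n/m}Y_{i+1}^{-n/m}$. Because $\|\underline{u}\|<Y_{i+1}$, the defining property of best approximations forces $M_i\le M(\underline{u})$, and (ii) follows.

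For (iii), fix $\omega'$ with $\omega<\omega'<\widehat{\omega}(A^T)$. By definition of $\widehat{\omega}(A^T)$, for every sufficiently large real $H$ there is a non-zero $\underline{x}\in\mathbb{F}_q[z]^n$ with $\|\underline{x}\|\le H$ and $M(\underline{x})\le H^{-\omega'}$. Taking $H=Y_{i+1}/q$ for large $i$ (admissible by (i)), one has $\|\underline{x}\|<Y_{i+1}$, and the best approximation property yields $M_i\le M(\underline{x})\le (Y_{i+1}/q)^{-\omega'}=q^{\omega'}Y_{i+1}^{-\omega'}$, which is at most $Y_{i+1}^{-\omega}$ once $Y_{i+1}^{\omega'-\omega}\ge q^{\omega'}$, i.e.\ for all sufficiently large $i$.

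Assertion (iv) would be handled by an analogous argument, now using the weaker definition of $\omega(A^T)$: for any $\omega'\in(\omega,\omega(A^T))$ there exist arbitrarily large $H$ admitting a non-zero $\underline{x}\in\mathbb{F}_q[z]^n$ with $\|\underline{x}\|\le H$ and $M(\underline{x})\le H^{-\omega'}$. For each such $\underline{x}$, I would let $j$ be maximal with $Y_j\le \|\underline{x}\|$; then $\|\underline{x}\|<Y_{j+1}$ and the best approximation property gives $M_j\le M(\underline{x})\le H^{-\omega'}\le Y_j^{-\omega'}\le Y_j^{-\omega}$. The only real obstacle in the whole lemma is to guarantee that as $H$ varies this construction produces infinitely many distinct indices $j$; I would settle this via the standing non-degeneracy hypothesis $\text{rk}_{\mathbb{F}_q[z]}(G_A)=m+n$, which forces $M(\underline{y})>0$ for every non-zero polynomial vector $\underline{y}$. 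Since $M(\underline{x}_H)\le H^{-\omega'}\to 0$, the norms $\|\underline{x}_H\|$ cannot remain bounded (else $M$ would take only finitely many positive values on these vectors), so $j\to\infty$ along the chosen sequence.
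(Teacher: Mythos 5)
Your proof is correct and follows the same route as the paper's: geometric growth of the $Y_i$ for (i), the Dirichlet-type Theorem \ref{Minkowski2} applied to $A^T$ for (ii), and the exponent definitions combined with the best-approximation property for (iii) and (iv). Your treatment of (iii) via an auxiliary $\omega'\in(\omega,\widehat{\omega}(A^T))$ and evaluation at $H=Y_{i+1}/q$ is in fact slightly more careful than the paper's, which applies the exponent definition directly at $H=Y_{i+1}$ and then asserts a solution with $\|\underline{y}\|<Y_{i+1}$; likewise, your explicit appeal to the non-degeneracy hypothesis to force $j\to\infty$ in (iv) makes precise a point the paper leaves implicit.
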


Remark.
In the special case $m=1$, $(ii)$ can be replaced by the large inequality
$M_i \le q^{n - 1} Y_{i+1}^{-n}$.

\begin{proof}
$(i)$ is immediate since $Y_{i+1}\ge q Y_i$.

$(ii)$. It follows from Theorem \ref{Minkowski2} that
the system of inequalities
$$
M(\underline{y}) < q^{-c\frac{n}{m}}\;\;\;\text{and}\;\;\;\|\underline{y}\|\le q^c
$$
has a non-zero polynomial $\underline{y}$ for $q^c=q^{-1}Y_{i+1}$. This implies that
$M_i <  (q^{-1}Y_{i+1})^{-\frac{n}{m}}$, as asserted.

$(iii)$. Let $\omega$ with $0 < \omega < \widehat{\omega}(A^T)$.
Then, the system of inequalities
$$
M(\underline{y})\le H^{-\omega}\;\;\;\text{and}\;\;\; \|\underline{y}\|\le H
$$
has a non-zero solution for any sufficiently large real number $H$. In particular, for
every sufficiently large integer $i$, the system of inequalities
$$
M(\underline{y})\le Y_{i+1}^{-\omega}\;\;\;\text{and}\;\;\; \|\underline{y}\| < Y_{i+1}
$$
has a non-zero solution $\underline{z}_i$, satisfying
$$
M_i\le M(\underline{z}_i)\le Y_{i+1}^{-\omega}.
$$

$(iv)$. For  $\omega<\omega(A^T)$,
there are infinitely many polynomial vectors $\underline{h}$ in $ \mathbb{F}_q((z^{-1}))^n$
such that $M(\underline{h})\le \|\underline{h}\|^{-\omega}$.
For every such $\underline{h}$ in $ \mathbb{F}_q((z^{-1}))^n$,
there exists an index $i$ such that $Y_i\le \|\underline{h}\|<Y_{i+1}$.
Then, $M_i\le M(\underline{h})\le \|\underline{h}\|^{-\omega}\le Y_i^{-\omega}$.
\end{proof}

\begin{lemma}
\label{bestlemma}
Let $(\underline{y}_i)_{i\geq1}$ be the sequence of best approximations related to the linear forms
$M_1,\cdots,M_m$. Then,
for almost all $\underline{\theta}=(\theta_1, \cdots,\theta_n)^T$ in $\mathbb{F}_q((z^{-1}))^n$,
we have
$$
|\langle \underline{y}_i  \underline{\theta}  \rangle|\ge Y_i^{-\delta},
$$
for any $\delta>0$
and any index $i$ which is sufficiently large in terms of $\delta$ and $\underline{\theta}$.
\end{lemma}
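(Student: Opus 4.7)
The plan is a classical first Borel--Cantelli argument. Fix $\delta>0$, bound the measures of the events $|\langle \underline{y}_i\underline{\theta}\rangle|<Y_i^{-\delta}$, use summability to conclude that almost every $\underline{\theta}$ lies in only finitely many of them, and finally intersect over a countable sequence $\delta=1/N$, $N\geq 1$. Since the quantity $|\langle \underline{y}_i\underline{\theta}\rangle|$ is unchanged when $\underline{\theta}$ is translated by an element of $\mathbb{F}_q[z]^n$, one first reduces to verifying the conclusion for $\mu$-almost every $\underline{\theta}$ in $\mathbb{I}^n$, where $\mu$ denotes the normalized Haar measure.

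For fixed $\delta>0$, set
$$
E_i(\delta):=\{\underline{\theta}\in \mathbb{I}^n: |\langle \underline{y}_i\underline{\theta}\rangle|<Y_i^{-\delta}\}.
$$
The core step is the bound $\mu(E_i(\delta))\leq Y_i^{-\delta}$. To obtain it, choose an index $j_0$ with $\|(y_i)_{j_0}\|=Y_i$, which exists because $\underline{y}_i$ is non-zero. Freezing the remaining coordinates via Fubini, the condition defining $E_i(\delta)$ becomes $\|\{(y_i)_{j_0}\theta_{j_0}\}+c\|<Y_i^{-\delta}$ for some $c\in\mathbb{I}$ depending only on the frozen variables. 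It thus suffices to prove the one-dimensional estimate
$$
\mu\bigl(\{\theta\in\mathbb{I}:\|\{Q\theta\}+c\|<\rho\}\bigr)\leq \rho
$$
for every polynomial $Q$ with $\|Q\|=q^k$, every $c\in\mathbb{I}$, and every $\rho>0$. This is standard in the non-Archimedean setting: expanding $\theta=\sum_{l\geq 1}a_l z^{-l}$ and $Q=\sum_{j=0}^{k}b_j z^j$ with $b_k\neq 0$, the successive coefficients of $\{Q\theta\}$ are explicit $\mathbb{F}_q$-linear forms in the $a_l$, and requiring the first few of them to match the prescribed coefficients of $-c$ is a linear system of full rank (thanks to $b_k\neq 0$), whose set of solutions has exactly the desired measure.

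To finish, Lemma~\ref{property of best appro}(i) gives $Y_i\geq q^i$, so
$$
\sum_{i\geq 1}\mu(E_i(\delta))\leq\sum_{i\geq 1}q^{-i\delta}<\infty.
$$
The Borel--Cantelli lemma then ensures that $\mu$-almost every $\underline{\theta}$ belongs to $E_i(\delta)$ for only finitely many $i$, that is, $|\langle \underline{y}_i\underline{\theta}\rangle|\geq Y_i^{-\delta}$ for all sufficiently large $i$. Intersecting the full-measure sets corresponding to $\delta=1/N$, $N\geq 1$, yields the statement for every $\delta>0$. The only delicate ingredient is the one-dimensional slice estimate above; the remaining steps (reduction to $\mathbb{I}^n$, Fubini, Borel--Cantelli) are entirely routine.
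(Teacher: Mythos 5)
Your proof is correct and follows the same route as the paper: bound $\mu\{\underline{\theta}:|\langle\underline{y}_i\underline{\theta}\rangle|<Y_i^{-\delta}\}$ by (a constant times) $Y_i^{-\delta}$, use $Y_i\ge q^i$ for summability, and apply Borel--Cantelli. The only difference is that where the paper simply cites equality (2.3) of Kristensen for the measure estimate, you supply a short self-contained derivation of it by isolating a coordinate $j_0$ with $\|(y_i)_{j_0}\|=Y_i$, applying Fubini, and observing that the constraint on the first $r$ digits of $\{(y_i)_{j_0}\theta_{j_0}\}$ is a full-rank triangular $\mathbb{F}_q$-linear system (since the leading coefficient of $(y_i)_{j_0}$ is non-zero); this buys a cleaner constant ($\le Y_i^{-\delta}$ rather than a multiple of it) and avoids an external reference, but it is the same underlying argument. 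You also make explicit the routine but necessary step of intersecting over $\delta=1/N$ to get the conclusion for all $\delta>0$ simultaneously, which the paper leaves implicit.
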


\begin{proof}
For any $\delta>0$ and any $i \ge 1$,
consider the set
$$
B(\underline{y_i})=\{\underline{\theta}=(\theta_1, \cdots,\theta_n)^T\colon
|\langle  \underline{y}_i  \underline{\theta}  \rangle|< Y_i^{-\delta}\}.
$$
It follows from equality (2.3) in \cite{Kirstensen3} that
the Haar measure of $B(\underline{y_i})$ is bounded from above by $Y_i^{-\delta}$
times some absolute, positive constant.
Combined with the fact that $Y_i\ge q^i$ for $i \ge 1$,
which ensures that the series $\sum_{i\ge 1}Y_i^{-\delta}$ converges,
we deduce from the Borel--Cantelli Lemma that
the set of $\underline{\theta}$ which belong to infinitely many sets $B(\underline{y_i})$
has Haar measure zero. This implies the lemma.
\end{proof}

Let $\alpha$ be in $\mathbb{I}$. Denote by $[0 ; A_1,A_2,\cdots] $ its continued fraction expansion
and by $\frac{P_k}{Q_k}$ its $k$-th convergent, for $k \ge 0$. Set
$$
D_k=Q_k \alpha-P_k, \quad \hbox{for $k \ge 1$.}
$$

\begin{lemma}
[\cite{Fuchs}]
\label{lemma1}
Under the above notation, we have
\begin{enumerate}
\item[(1)] $D_{k+1}=A_{k+1}D_k+D_{k-1},$
\item[(2)] $\|D_k\|=\|Q_k\alpha-P_k\|=\|\{Q_k\alpha\}\|=\frac{1}{\|Q_{k+1}\|}.$
\end{enumerate}
\end{lemma}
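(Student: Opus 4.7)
The plan is to prove (1) by a direct algebraic manipulation using the convergent recurrences collected in Lemma \ref{lemma fuchs}, and to prove (2) by deriving a closed-form expression for $Q_k\alpha - P_k$ and then analyzing its norm via non-Archimedean considerations. For (1), I substitute the recurrences into $D_{k+1} = Q_{k+1}\alpha - P_{k+1}$:
$$
D_{k+1} = (A_{k+1}Q_k + Q_{k-1})\alpha - (A_{k+1}P_k + P_{k-1}) = A_{k+1}(Q_k\alpha - P_k) + (Q_{k-1}\alpha - P_{k-1}),
$$
which is $A_{k+1}D_k + D_{k-1}$.

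For (2), the equality $\|D_k\| = \|Q_k\alpha - P_k\|$ is just the definition. For the other two, I introduce the $(k+1)$-th complete quotient $\alpha_{k+1} = 1/T^k\alpha = A_{k+1} + T^{k+1}\alpha$ and prove the standard identity
$$
\alpha = \frac{P_k\alpha_{k+1} + P_{k-1}}{Q_k\alpha_{k+1} + Q_{k-1}}
$$
by induction on $k$, with the inductive step exploiting $T^{k-1}\alpha = 1/(A_k + T^k\alpha)$ together with the convergent recurrences. Cross-multiplying, collecting terms, and invoking Lemma \ref{lemma fuchs}(4) then yields
$$
Q_k\alpha - P_k = \frac{(-1)^k}{Q_k\alpha_{k+1} + Q_{k-1}}.
$$

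To compute the denominator's norm, I use that $T^{k+1}\alpha \in \mathbb{I}$ forces $\|\alpha_{k+1}\| = \|A_{k+1}\|$, whence $\|Q_k\alpha_{k+1}\| = \|Q_k\|\,\|A_{k+1}\| = \|Q_{k+1}\|$ by Lemma \ref{lemma fuchs}(3). Combining this with $\|Q_{k-1}\| < \|Q_k\| < \|Q_{k+1}\|$ from Lemma \ref{lemma fuchs}(2), the non-Archimedean ultrametric inequality gives $\|Q_k\alpha_{k+1} + Q_{k-1}\| = \|Q_{k+1}\|$, and therefore $\|Q_k\alpha - P_k\| = 1/\|Q_{k+1}\|$, which is strictly less than $1$. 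Because the decomposition $x = [x] + \{x\}$ in $\mathbb{F}_q((z^{-1}))$ subject to $\|\{x\}\|<1$ is unique (a polynomial of norm less than $1$ must vanish), this forces $P_k = [Q_k\alpha]$ and hence $\{Q_k\alpha\} = Q_k\alpha - P_k$, completing (2).

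The only real obstacle is writing down and verifying the complete-quotient identity for $\alpha$; every other step is either a two-line rearrangement or a direct application of the ultrametric inequality. Since the lemma is quoted from Fuchs, I would simply cite \cite{Fuchs} for that identity and reserve the detailed work for the ultrametric norm estimates, which are the genuinely non-Archimedean ingredient of the argument.
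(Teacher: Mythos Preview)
Your proof is correct and entirely standard: part (1) is immediate from the convergent recurrences, and for part (2) the complete-quotient identity $\alpha=(P_k\alpha_{k+1}+P_{k-1})/(Q_k\alpha_{k+1}+Q_{k-1})$ together with Lemma~\ref{lemma fuchs}(4) gives the closed form $D_k=(-1)^k/(Q_k\alpha_{k+1}+Q_{k-1})$, after which the ultrametric estimate $\|Q_k\alpha_{k+1}+Q_{k-1}\|=\|Q_{k+1}\|$ and the uniqueness of the integral/fractional decomposition finish the job.

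The paper itself does not prove this lemma at all; it simply attributes it to Fuchs~\cite{Fuchs} and states it without argument. So there is no methodological comparison to make: you have supplied the details that the paper omits by citation, and the route you take is exactly the one found in the cited source.
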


In addition to continued fractions,
we also make use of the Ostrowski expansion of the elements of $\mathbb{I}$
with respect to an irrational
power series $\alpha$.

.

\begin{lemma}
[\cite{Kim1}]
\label{lemma2}
Under the above notation,
for every positive integer $k$ and every $Q$ in $\mathbb{F}_q[z]$ with $\deg Q< \deg Q_{k+1}$,
there is a unique decomposition
$$
Q=B_1Q_0+B_2Q_1+\cdots+B_{k+1}Q_k,
$$
where $B_i$ is in $ \mathbb{F}_q[z]$ and $\deg B_i<\deg A_i$ for $1\le i\le k+1$.
\end{lemma}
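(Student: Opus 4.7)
The plan is to prove the Ostrowski decomposition by induction on $k$, using the Euclidean algorithm in $\mathbb{F}_q[z]$. The basic ingredient is that Lemma \ref{lemma fuchs}$(3)$ together with the recurrence $Q_{i+1}=A_{i+1}Q_i+Q_{i-1}$ yields $\deg Q_i=\sum_{j=1}^{i}\deg A_j$, so in particular $\deg Q_{k+1}=\deg Q_k+\deg A_{k+1}$. This will let us control the degree of each successive quotient obtained by division.

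For existence, I would argue by induction on $k$. The base case $k=1$ is a single Euclidean division: given $Q$ with $\deg Q<\deg Q_2=\deg Q_1+\deg A_2$, write $Q=B_2Q_1+B_1$ with $\deg B_1<\deg Q_1=\deg A_1$; since $\deg(B_2Q_1)$ is at most $\max(\deg Q,\deg B_1)<\deg Q_2$, one gets $\deg B_2<\deg A_2$. For the inductive step, given $Q$ with $\deg Q<\deg Q_{k+1}$, divide by $Q_k$ to obtain $Q=B_{k+1}Q_k+R$ with $\deg R<\deg Q_k$ and $\deg B_{k+1}<\deg Q_{k+1}-\deg Q_k=\deg A_{k+1}$; then apply the induction hypothesis to $R$.

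For uniqueness, two distinct representations would produce a non-trivial relation $\sum_{i=1}^{k+1}C_iQ_{i-1}=0$ with $\deg C_i<\deg A_i$. Let $j$ be the largest index with $C_j\neq 0$. Then $\deg(C_jQ_{j-1})\ge \deg Q_{j-1}$, whereas for each $i<j$ one has $\deg(C_iQ_{i-1})<\deg A_i+\deg Q_{i-1}=\deg Q_i\le \deg Q_{j-1}$. Hence the leading term $C_jQ_{j-1}$ cannot be cancelled by the lower-indexed contributions, contradicting that the sum vanishes. Alternatively, a counting argument suffices: the number of admissible tuples $(B_1,\ldots,B_{k+1})$ equals $\prod_{i=1}^{k+1}q^{\deg A_i}=q^{\deg Q_{k+1}}$, which matches the number of polynomials $Q$ with $\deg Q<\deg Q_{k+1}$, so surjectivity and injectivity are equivalent.

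There is no real obstacle here — the result is a standard Ostrowski-type statement that transfers directly from $\mathbb{Z}$ to $\mathbb{F}_q[z]$, and the only point requiring care is the correct use of degree (i.e.\ the ultrametric bound $\deg(f+g)\le \max(\deg f,\deg g)$, with equality when the degrees differ) when extracting the quotient in each division step.
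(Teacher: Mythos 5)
Your proof is correct. The paper itself does not prove this lemma --- it is stated with a citation to Kim--Nakada \cite{Kim1} and no argument is given --- so there is no in-paper proof to compare against; but your induction via Euclidean division by $Q_k$, driven by the degree identity $\deg Q_i=\sum_{j\le i}\deg A_j$ from Lemma \ref{lemma fuchs}(3), is the standard Ostrowski argument and matches what one finds in the cited source. Both of your uniqueness arguments are sound: the ultrametric one correctly observes that if $j$ is the top nonzero index then $\deg(C_jQ_{j-1})\ge\deg Q_{j-1}$ while every lower term has degree $<\deg Q_i\le\deg Q_{j-1}$, and the counting argument correctly matches $\prod_i q^{\deg A_i}=q^{\deg Q_{k+1}}$ admissible tuples against $q^{\deg Q_{k+1}}$ polynomials of degree $<\deg Q_{k+1}$.
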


\begin{lemma}
[\cite{Kim2}]
\label{lemma3}
Under the above notation, for every $\beta$ in $\mathbb{I}$,
there is a representation of $\beta$ under the form
\begin{equation}
\label{Ostrowski expansion}
\beta=\sum_{k=0}^{\infty}\sigma_{k+1}(\beta)D_k=\sigma_1(\beta)D_0+\sigma_2(\beta)D_1+\cdots,
\end{equation}
where $\sigma_i(\beta)$ is in $ \mathbb{F}_q[z]$
and $\deg \sigma_i(\beta)<\deg A_i(\alpha)$ for $ i\ge 1$.
The representation (\ref{Ostrowski expansion}) is called the
Ostrowski expansion of $\beta$ with respect to $\alpha$
or an $\alpha$-expansion for $\beta$.
\end{lemma}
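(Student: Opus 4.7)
The plan is to imitate the greedy algorithm for the real-valued Ostrowski expansion, exploiting the non-Archimedean nature of $\|\cdot\|$ on $\mathbb{F}_q((z^{-1}))$ to obtain convergence for free. With the conventions $P_0=0$, $Q_0=1$ already fixed in Section 3, the quantity $D_0:=Q_0\alpha-P_0=\alpha$ is defined and Lemma \ref{lemma1}(2) extends to $\|D_k\|=1/\|Q_{k+1}\|$ for every $k\ge 0$. Set $\beta_0:=\beta$ and, whenever $\beta_k\in\mathbb{F}_q((z^{-1}))$ has been produced, put
$$
\sigma_{k+1}(\beta):=\bigl[\beta_k/D_k\bigr],\qquad \beta_{k+1}:=\beta_k-\sigma_{k+1}(\beta)\,D_k=D_k\bigl\{\beta_k/D_k\bigr\}.
$$
Each $\sigma_{k+1}(\beta)$ lies in $\mathbb{F}_q[z]$ by the very definition of $[\cdot]$, and a telescoping computation gives $\beta-\sum_{j=0}^{k}\sigma_{j+1}(\beta)D_j=\beta_{k+1}$ for every $k\ge 0$.

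Next I would prove by induction on $k\ge 0$ the key size estimate $\|\beta_k\|<\|A_{k+1}\|\cdot\|D_k\|$. The base case follows from $\|\beta\|<1=\|A_1\|\cdot\|D_0\|$, using $Q_1=A_1$ from the recurrence and Lemma \ref{lemma1}(2). For the inductive step, the bound $\|\beta_k/D_k\|<\|A_{k+1}\|$ means that the power series $\beta_k/D_k$ has degree strictly less than $\deg A_{k+1}$, hence so does its integer part $\sigma_{k+1}(\beta)$; this is exactly the Ostrowski degree constraint required by the lemma. The fractional part obeys $\|\{\beta_k/D_k\}\|\le q^{-1}$ by its very definition, so
$$
\|\beta_{k+1}\|=\|D_k\|\cdot\|\{\beta_k/D_k\}\|\le q^{-1}\|D_k\|<\|D_k\|=\|A_{k+2}\|\cdot\|D_{k+1}\|,
$$
where the last equality uses Lemma \ref{lemma fuchs}(3) together with Lemma \ref{lemma1}(2); this closes the induction.

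Finally, Lemma \ref{lemma fuchs}(2) yields $\|D_k\|=1/\|Q_{k+1}\|\to 0$, so the partial sum estimate $\|\beta-\sum_{j=0}^{k}\sigma_{j+1}(\beta)D_j\|=\|\beta_{k+1}\|\le q^{-1}\|D_k\|$ forces the series $\sum_{k\ge 0}\sigma_{k+1}(\beta)D_k$ to converge to $\beta$, giving the Ostrowski expansion \eqref{Ostrowski expansion}. The only point requiring care, and the one I view as the main obstacle, is the automatic compatibility of the greedy rule $\sigma_{k+1}=[\beta_k/D_k]$ with the Ostrowski degree constraint $\deg\sigma_{k+1}<\deg A_{k+1}$; the inductive invariant $\|\beta_k\|<\|A_{k+1}\|\cdot\|D_k\|$ is precisely what encodes this compatibility, and once it is in place the non-Archimedean norm renders convergence essentially trivial.
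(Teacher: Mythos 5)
Your proof is correct: the greedy/division algorithm you describe, driven by the invariant $\|\beta_k\|<\|A_{k+1}\|\cdot\|D_k\|$, does produce the Ostrowski digits with the required degree constraints, and the ultrametric makes convergence of the series immediate. The paper itself does not prove Lemma~\ref{lemma3} but simply cites it from \cite{Kim2}, and your argument is essentially the standard greedy construction used in that reference, so there is nothing further to reconcile.
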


For simplicity,
we write
$$
\beta=[\sigma_1(\beta),\sigma_2(\beta),\cdots,\sigma_n(\beta),\cdots]_{\alpha}
$$
and call the sequence $(\sigma_n(\beta))_{n\ge 1}$ the sequence of digits of $\beta$.
To facilitate the exposition,
we make use of a kind of symbolic space defined as follows.

For any $n\ge 1$, set
$$
\mathbb{L}_n(\alpha)=\{(\sigma_1, \cdots,\sigma_n)\colon \sigma_i\in \mathbb{F}_q[z]\;\;
\text{and}\;\;\deg \sigma_i<\deg A_i(\alpha)\;\;\text{for}\;\;1\le i\le n\}
$$
and
$$
\mathbb{L}(\alpha)=\bigcup_{n=1}^{\infty}\mathbb{L}_n(\alpha).
$$
Then, for any $(\sigma_1, \cdots,\sigma_n)$ in $\mathbb{L}_n(\alpha)$,
there exists an element $\beta$ in $\mathbb{I}$ whose sequence
of digits begins with $(\sigma_1, \cdots,\sigma_n)$.

For an $n$-tuple $\sigma=(\sigma_1, \cdots,\sigma_n)$ in $\mathbb{L}_n(\alpha)$,
we call
$$
I_n(\sigma_1, \cdots,\sigma_n)=\{\beta\in \mathbb{I}\colon \sigma_k(\beta)
=\sigma_k \;\;\text{for}\;\;1\le k\le n\}
$$
a cylinder of order $n$; this is
the set of formal power series in $\mathbb{I}$ which have an $\alpha$-expansion beginning with
$\sigma_1, \cdots,\sigma_n$.

For the size of the cylinder, we have the following lemma.

\begin{lemma}
[\cite{Kim2}]
\label{length of cylinder}
For any $\sigma=(\sigma_1,\cdots,\sigma_n)$ in $\mathbb{L}_n(\alpha)$,
the $n$-th cylinder $I_n(\sigma_1,\cdots,\sigma_n)$ is a closed disc
centered at $\sum_{k=0}^{n-1}\sigma_{k+1}D_k$ and of diameter
$q^{-\deg Q_n-1}$.
\end{lemma}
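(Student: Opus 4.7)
The plan is to establish the two set-theoretic inclusions separately, both relying on the ultrametric nature of the norm $\|\cdot\|$ together with the bounds already collected for $\|D_k\|$ and for the digits $\sigma_i$.

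First, I would show that $I_n(\sigma_1,\ldots,\sigma_n)$ is contained in the closed ball of radius $q^{-\deg Q_n-1}$ centered at $c:=\sum_{k=0}^{n-1}\sigma_{k+1}D_k$. For any $\beta$ in $I_n(\sigma_1,\ldots,\sigma_n)$, its Ostrowski expansion (Lemma \ref{lemma3}) gives
$$
\beta - c \;=\; \sum_{k=n}^{\infty} \sigma_{k+1}(\beta)\, D_k.
$$
The constraint $\deg \sigma_{k+1}(\beta) < \deg A_{k+1}(\alpha)$ forces $\|\sigma_{k+1}(\beta)\| \le \|A_{k+1}(\alpha)\|/q$, while Lemma \ref{lemma fuchs}(3) and Lemma \ref{lemma1}(2) together give $\|D_k\|\cdot\|A_{k+1}(\alpha)\| = 1/\|Q_k\|$. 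Combining these and using the monotonicity of $\|Q_k\|$, every term on the right has norm at most $q^{-\deg Q_k-1}\le q^{-\deg Q_n-1}$, so the ultrametric inequality yields $\|\beta-c\|\le q^{-\deg Q_n-1}$, as desired.

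The reverse inclusion is the main obstacle, since I must control the first $n$ digits of an arbitrary $\beta$ known only to be close to $c$. Given $\beta$ with $\|\beta-c\|\le q^{-\deg Q_n-1}$, write $\beta=\sum_{k\ge 0}\sigma_{k+1}(\beta)D_k$ and suppose, for contradiction, that $\sigma_j(\beta)\ne \sigma_j$ for some $j$ with $1\le j\le n$; take $j$ minimal. Setting $\tau:=\sigma_j(\beta)-\sigma_j$, one has $\tau\ne 0$ with $\deg\tau<\deg A_j(\alpha)$, so $\|\tau\|\ge 1$ and hence $\|\tau D_{j-1}\|\ge 1/\|Q_j\| = q^{-\deg Q_j}$. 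Rewriting
$$
\beta-c \;=\; \tau\,D_{j-1} \;+\; \sum_{k=j}^{n-1}(\sigma_{k+1}(\beta)-\sigma_{k+1})\,D_k \;+\; \sum_{k=n}^{\infty}\sigma_{k+1}(\beta)\,D_k,
$$
every summand other than $\tau D_{j-1}$ has norm at most $q^{-\deg Q_k-1}\le q^{-\deg Q_j-1}$, strictly less than $\|\tau D_{j-1}\|$. The isosceles form of the ultrametric inequality then forces $\|\beta-c\|=\|\tau D_{j-1}\|\ge q^{-\deg Q_j}\ge q^{-\deg Q_n}>q^{-\deg Q_n-1}$, contradicting the hypothesis. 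Therefore $\sigma_j(\beta)=\sigma_j$ for $1\le j\le n$, so $\beta\in I_n(\sigma_1,\ldots,\sigma_n)$, and the two inclusions together show that $I_n(\sigma_1,\ldots,\sigma_n)$ is precisely the closed disc of diameter $q^{-\deg Q_n-1}$ centered at $c$. The crux is the isosceles-dominance argument in the second step: it hinges on the strict degree condition $\deg\sigma_i<\deg A_i$, which produces the factor $1/q$ that pushes every competing term strictly below $\|\tau D_{j-1}\|$, together with the growth of $\|Q_k\|$ recorded in Lemma \ref{lemma fuchs}(2).
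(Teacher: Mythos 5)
The paper cites this lemma from \cite{Kim2} without reproducing a proof, so there is no in-paper argument to compare against. Your two-inclusion argument is correct and is the standard proof: the forward inclusion follows from the term-by-term bound $\|\sigma_{k+1}(\beta)D_k\|\le q^{-\deg Q_k-1}$ (using $\|D_k\|\cdot\|A_{k+1}\|=1/\|Q_k\|$ and the strict degree constraint $\deg\sigma_{k+1}<\deg A_{k+1}$, which is exactly what produces the extra factor $1/q$ in the radius), while the reverse inclusion is the ultrametric isosceles property applied at the first index where the Ostrowski digits of $\beta$ and of the center disagree, the resulting term $\tau D_{j-1}$ of norm at least $q^{-\deg Q_j}$ strictly dominating every other summand.
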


\section{A transference lemma and the proof of Theorem \ref{kronecker}}

Recall that
$$
M_j(\underline{y})=\sum_{i=1}^n\alpha_{i,j}y_i,\;\;\;\underline{y}=(y_1, \cdots, y_n)^T\;\;\;(1\le j\le m),
$$
are the linear forms determined by the columns of the matrix $A=(\alpha_{i,j})$,
and
$$
L_i(\underline{x})=\sum_{j=1}^m\alpha_{i,j}x_j,\;\;\;\underline{x}=(x_1, \cdots,x_m)^T\;\;\;(1\le i\le n),
$$
are the linear forms determined by its rows.

In this section,
by using a similar method as in the real case (see \cite{cassel}),
we prove a transference lemma,
which establishes a relation between inhomogeneous
simultaneous approximation and homogeneous approximation.
To give the proof, we need some auxiliary results.
We first state a power series analogue of Theorem XVI on page 97 of \cite{cassel}.

\begin{thm}
\label{thm for tran}
Let $l$ be a positive integer and
$f_k(\underline{\theta})$, $g_k(\underline{\xi})$ for $1\le k\le l$ be linear forms in
$\underline{\theta}=(\theta_1, \cdots,\theta_l)$ and $\underline{\xi}=(\xi_1, \cdots,\xi_l)$, respectively.
Suppose that
\begin{equation}
\label{tran lem eq1}
\sum_{k=1}^{l} \, f_k(\underline{\theta})g_k(\underline{\xi})=\sum_{k=1}^{l} \, \theta_k\xi_k
\end{equation}
identically.
Let $\underline{\beta}=(\beta_1, \cdots,\beta_l)$ be a vector in $\mathbb{F}_q((z^{-1}))^l$.
If
\begin{equation}
\label{tran lem eq2}
\left|\left\langle\sum_{k=1}^l  g_k(\underline{\xi})\beta_k\right\rangle\right|
\le \max_{1 \le k \le l} \, \|g_k(\underline{\xi})\|
\end{equation}
holds for all polynomial vectors $\underline{\xi}$,
then there exists a polynomial vector $\underline{b}$ in $\mathbb{F}_q[z]^l$ such that
\begin{equation}
\label{tran lem eq3}
|\langle\beta_k-f_k(\underline{b})\rangle|\le 1, \;\;\; 1\le k\le l.
\end{equation}
\end{thm}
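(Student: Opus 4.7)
My plan is to adapt the proof of the classical Cassels transference theorem (\cite{cassel}, Theorem~XVI on p.~97) to the non-Archimedean setting of $\mathbb{F}_q((z^{-1}))$. Write $F = (F_{ki})$ and $G = (G_{kj})$ for the $l\times l$ coefficient matrices of the linear forms $f_k$ and $g_k$, so that $f_k(\underline{\theta}) = (F\underline{\theta})_k$ and $g_k(\underline{\xi}) = (G\underline{\xi})_k$. The bilinear identity (\ref{tran lem eq1}) is equivalent to the single matrix equation $F^T G = I_l$, which identifies $G$ with $(F^T)^{-1}$. This duality between the two systems of forms is the engine that drives the transference.

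I would argue by contradiction. Suppose that no polynomial vector $\underline{b}\in\mathbb{F}_q[z]^l$ satisfies (\ref{tran lem eq3}). Then, for every such $\underline{b}$, at least one coordinate of $\underline{\beta}-F\underline{b}$ fails to lie within distance $1$ of $\mathbb{F}_q[z]$, so the coset $\underline{\beta}+F(\mathbb{F}_q[z]^l)+\mathbb{F}_q[z]^l$ misses the max-norm unit ball in $\mathbb{F}_q((z^{-1}))^l$. To convert this failure into a violation of (\ref{tran lem eq2}), I would set up a homogeneous system of $2l$ linear forms in $2l$ polynomial variables, with block structure built from $F$, $G$, and $\underline{\beta}$ (using a standard trick with an auxiliary coordinate to absorb the inhomogeneous shift by $\underline{\beta}$). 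Applying Theorem \ref{Minkowski2} to this system produces a non-zero polynomial vector whose $\underline{\xi}$-block, via the identity $F^T G = I_l$, simultaneously makes $\max_k\|g_k(\underline{\xi})\|=\|G\underline{\xi}\|_\infty$ small while forcing $\underline{\beta}^T G\underline{\xi}=\sum_k g_k(\underline{\xi})\beta_k$ to stay far from $\mathbb{F}_q[z]$, contradicting (\ref{tran lem eq2}).

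The main obstacle is the precise construction of the $(2l)\times(2l)$ system: one must choose the exponents in Theorem \ref{Minkowski2} so that the returned vector has the correct scaling on both blocks, homogenise the inhomogeneous shift $\underline{\beta}$ correctly, and ensure that the $\underline{\xi}$-block is non-trivial so that the contradiction is genuine rather than vacuous. The non-Archimedean ball intersection property simplifies the counting and eliminates the Archimedean constants that appear in the real case, but the bilinear identity $F^T G = I_l$ must be applied at exactly the right moment so that the quantities $|\langle \underline{\beta}^T G\underline{\xi}\rangle|$ and $\|G\underline{\xi}\|_\infty$ are forced into the order opposite to (\ref{tran lem eq2}).
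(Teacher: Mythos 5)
Your opening reduction is correct: writing $F$ and $G$ for the coefficient matrices, the bilinear identity (\ref{tran lem eq1}) says precisely that $G$ is the inverse of $F$ (up to a transpose depending on one's row/column convention), and this duality is indeed what drives the transference. But from there your plan diverges from a workable argument, and the gap is not merely one of missing detail.

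The core problem is the role you assign to Theorem \ref{Minkowski2}. That theorem is a Dirichlet--Minkowski \emph{first} theorem: given a bound $q^c$ on $\|\underline{u}\|$, it produces a non-zero polynomial vector for which $|\langle A\underline{u}\rangle|$ is \emph{small}. It only ever supplies upper bounds on fractional parts. Your contradiction, however, requires a $\underline{\xi}$ for which $\bigl|\bigl\langle\sum_k g_k(\underline{\xi})\beta_k\bigr\rangle\bigr|$ is \emph{large} relative to $\max_k\|g_k(\underline{\xi})\|$; that is a lower bound on a distance to $\mathbb{F}_q[z]$, and no amount of rescaling or stacking a $(2l)\times(2l)$ block system will make a Dirichlet-type theorem produce it. Moreover, the classical proof of Cassels' Theorem~XVI is not by contradiction at all: it is a direct construction via Minkowski's \emph{second} convex-body theorem (the successive-minima theorem together with a unimodular reducing matrix), so your proposal is not actually an adaptation of the argument you cite.

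What the paper does instead, and what is genuinely needed, is Mahler's function-field analogue of Minkowski's second theorem (Section~9 of \cite{Mahler}): there is a polynomial matrix $W$ with $\|\det W\|=1$ whose rows realize the successive minima $\mu_1,\dots,\mu_l$ of the distance function $\underline{\xi}\mapsto\max_j\|g_j(\underline{\xi})\|$, and $\prod_k\mu_k=\|\det G\|$. Applying hypothesis~(\ref{tran lem eq2}) to each row of $W$ decomposes $WG\underline{\beta}$ as a polynomial vector $\underline{a}$ plus an error $\underline{\delta}$ with $\|\delta_k\|\le\mu_k$; one then sets $\underline{b}=W^{-1}\underline{a}$ (polynomial because $\|\det W\|=1$) and bounds $\underline{\gamma}=\underline{\beta}-F\underline{b}$ coordinatewise by Cramer's rule, using $\prod\mu_k=\|\det G\|$ to get $\|\gamma_j\|\le1$. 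No contradiction is involved; the unimodular matrix $W$ is the essential object, and Theorem~\ref{Minkowski2} alone cannot produce it. To repair your argument you would have to replace Theorem~\ref{Minkowski2} by Mahler's successive-minima theorem and drop the contradiction framework in favour of the direct construction of $\underline{b}$.
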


\begin{proof}
We regard $\underline{\xi}$ as a row vector and $\underline{\theta}$, $\underline{\beta}$
as column vectors.
Let $G=(g_{i,j})$ be the ${l\times l}$ square matrix whose $k$-th column is the coefficients of $g_k$
and $F=(f_{i,j})$ be the ${l\times l}$ square matrix whose $k$-th row is the coefficients of $f_k$.
Then, equality (\ref{tran lem eq1}) becomes
\begin{equation*}
(\xi_1,\xi_2,\cdots,\xi_l)
\left(
\begin{array}{cccc}
g_{11} & g_{21} & \cdots & g_{l1} \\
g_{12} & g_{22} & \cdots & g_{l2} \\
\cdots & \cdots & \cdots & \cdots \\
g_{1l} & g_{2l} & \cdots & g_{ll} \\
\end{array}
\right)
\left(
\begin{array}{cccc}
f_{11} & f_{12} & \cdots & f_{1l} \\
f_{21} & f_{22} & \cdots & f_{2l} \\
\cdots & \cdots & \cdots & \cdots \\
f_{l1} & f_{l2} & \cdots & f_{ll} \\
\end{array}
\right)
\left(
\begin{array}{c}
\theta_1 \\
\theta_2 \\
\vdots \\
\theta_l \\
\end{array}
\right)=\sum_{k=1}^l  \theta_k\xi_k.
\end{equation*}
This implies that
\begin{equation}
\label{tran lem eq4}
G=F^{-1}.
\end{equation}
By the analogue of Minkowski's Theorem in $\mathbb{F}_q((z^{-1}))$ proved by Mahler in
Section 9 of \cite{Mahler} and applied to the convex body
$\max_{1\le j\le l}\|g_j(\underline{\xi})\|\le1$,
there is a polynomial $l\times l$ matrix $W$ with $\|\det W\|=1$
whose $k$-th row $\underline{w}^{(k)}$ satisfies
\begin{equation}
\label{tran lem eq5}
\max_{1\le j\le l}\|g_j(\underline{w}^{(k)})\|=\mu_k,\;\;\;\;\prod_{k=1}^l \mu_k=\|\det G\|,
\end{equation}
where the positive real numbers $\mu_k$, $1\le k\le l$, are the successive minima for the function
$\max_{1\le j\le l}\|g_j(\underline{\xi})\|$.

By (\ref{tran lem eq2}), (\ref{tran lem eq5}), and the definition of $g_k(\underline{\xi})$,
we have
\begin{equation*}
\begin{split}
WG\underline{\beta}
&=\left(
\begin{array}{c}
\underline{w}^{(1)}G \\
\underline{w}^{(2)}G \\
\vdots \\
\underline{w}^{(l)}G \\
\end{array}
\right)\underline{\beta}
=\left(
\begin{array}{cccc}
g_1(\underline{w}^{(1)}) & g_2(\underline{w}^{(1)}) & \cdots & g_l(\underline{w}^{(1)}) \\
g_1(\underline{w}^{(2)}) & g_2(\underline{w}^{(2)}) & \cdots & g_l(\underline{w}^{(2)}) \\
\cdots & \cdots & \cdots & \cdots \\
g_1(\underline{w}^{(l)}) & g_2(\underline{w}^{(l)}) & \cdots & g_l(\underline{w}^{(l)})\\
\end{array}
\right)
\left(
\begin{array}{c}
\beta_1 \\
\beta_2 \\
\vdots \\
\beta_l \\
\end{array}
\right)\\
&=
\left(
\begin{array}{c}
\sum_{j=1}^l\beta_j g_j(\underline{w}^{(1)}) \\
\sum_{j=1}^l\beta_j g_j(\underline{w}^{(2)}) \\
\cdots \\
\sum_{j=1}^l\beta_j g_j(\underline{w}^{(l)}) \\
\end{array}
\right)
=\underline{a}+\underline{\delta},
\end{split}
\end{equation*}
where $\underline{a}$ is polynomial vector and
\begin{equation}
\label{tran lem eq6}
\|\underline{\delta}_k\|\le \mu_k\;\;\text{for}\;\;1\le k\le l.
\end{equation}
Hence, by (\ref{tran lem eq4}), we get
\begin{equation}
\label{tran lem eq7}
\underline{\beta}=F\underline{b}+\underline{\gamma},
\end{equation}
where $\underline{b}=W^{-1}\underline{a}$ and $\underline{\delta}=WG\underline{\gamma}$.
Here,  $\underline{b}$ is also a polynomial vector since $\|\det W\|=1$.
By the matrix operation on the ring of matrices whose coordinates are in the fields of power series,
we get
$$
\gamma_j=\frac{\det((WG)_j)}{\det(WG)^{-1}},
$$
where
\begin{equation*}
(WG)_j=
\left(
\begin{array}{ccccccc}
g_1(\underline{w}^{(1)}) & \cdots & g_{j-1}(\underline{w}^{(1)}) & \delta_1 &
g_{j+1}(\underline{w}^{(1)}) & \cdots & g_l(\underline{w}^{(1)}) \\
g_1(\underline{w}^{(2)}) & \cdots & g_{j-1}(\underline{w}^{(2)}) & \delta_2 &
g_{j+1}(\underline{w}^{(2)}) & \cdots & g_l(\underline{w}^{(2)}) \\
\cdots & \cdots & \cdots & \cdots & \cdots & \cdots & \cdots \\
g_1(\underline{w}^{(l)}) & \cdots & g_{j-1}(\underline{w}^{(l)}) & \delta_l &
g_{j+1}(\underline{w}^{(l)}) & \cdots & g_l(\underline{w}^{(l)})\\
\end{array}
\right).
\end{equation*}
By (\ref{tran lem eq5}), the norm of the $k$-th row of the $WG$ is at most $\mu_k$.
Combined with (\ref{tran lem eq6}), we get
\begin{equation}
\begin{split}
\|\gamma_j\|
&\le \|\det G\|^{-1} \, \prod_{k=1}^l \mu_k\le1,
\end{split}
\end{equation}
which gives
$$
|\langle\beta_k-f_k(\underline{b})\rangle|\le 1 \;\;\; (1\le k\le l).
$$
\end{proof}

\begin{cor}
\label{cor for tran}
Let $L_j(\underline{x})$, $M_i(\underline{u})$ be as above and set $l = m+n$.
Let $\underline{\alpha}=(\alpha_1, \cdots,\alpha_n)$ in $\mathbb{F}_q((z^{-1}))^n$,
and $s,t$ be positive integers.
Suppose that
\begin{equation}
\label{tran lem eq8}
|\langle u_1\alpha_1+ \cdots+u_n\alpha_n\rangle|
\le
\max\left\{q^t\max_{1\le i\le m}|\langle M_i(\underline{u})\rangle|,q^{-s}\max_{1\le j\le n}\|u_j\|\right\}
\end{equation}
holds for all polynomial vectors $\underline{u}$.
Then, there exists a polynomial vector $\underline{b}=(b_1, \cdots,b_m)$ with
$$
|\langle L_j(\underline{b})-\alpha_j\rangle|\le q^{-s},\;\;\;\|b_j\|\le q^t, \quad j = 1, \ldots , m.
$$
\end{cor}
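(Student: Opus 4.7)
The plan is to deduce Corollary \ref{cor for tran} from Theorem \ref{thm for tran} applied with $l = m+n$, introducing $m$ auxiliary coordinates that will absorb the polynomial freedom hidden inside each $|\langle M_i(\underline{u})\rangle|$. Concretely, writing $\underline{\xi} = (\xi_1,\ldots,\xi_{n+m})$, I would set
$$g_k(\underline{\xi}) = q^{-s}\,\xi_k \quad (1 \le k \le n), \qquad g_{n+i}(\underline{\xi}) = q^t\Bigl(\sum_{j=1}^n \alpha_{j,i}\xi_j - \xi_{n+i}\Bigr)\quad (1 \le i \le m),$$
together with $\beta_k = q^s\alpha_k$ for $k \le n$ and $\beta_{n+i} = 0$ for $1 \le i \le m$. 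For polynomial $\underline{\xi} = (\underline{u},\underline{v})$ this produces $\sum_k g_k(\underline{\xi})\beta_k = \sum_{k=1}^n u_k\alpha_k$ and
$$\max_{k} \|g_k(\underline{\xi})\| = \max\Bigl\{q^{-s}\max_j \|u_j\|,\; q^t\max_i\|M_i(\underline{u})-v_i\|\Bigr\} \ge \max\Bigl\{q^{-s}\max_j\|u_j\|,\; q^t\max_i|\langle M_i(\underline{u})\rangle|\Bigr\},$$
so the corollary's hypothesis (\ref{tran lem eq8}) immediately implies the theorem's hypothesis (\ref{tran lem eq2}) for every polynomial $\underline{\xi}$.

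To invoke Theorem \ref{thm for tran}, one still has to produce companion forms $f_k$ verifying the bilinear identity (\ref{tran lem eq1}); equivalently, if $G$ denotes the $(n+m)\times(n+m)$ matrix whose $k$-th column lists the coefficients of $g_k$, one needs $F$ with $GF = I$. Here
$$G = \begin{pmatrix} q^{-s}I_n & q^t A \\ 0 & -q^t I_m\end{pmatrix},$$
which is block upper triangular and invertible, and a routine block inversion gives
$$F = G^{-1} = \begin{pmatrix} q^s I_n & q^s A \\ 0 & -q^{-t}I_m\end{pmatrix}.$$
Consequently $f_k(\underline{\theta}) = q^s\bigl(\theta_k + L_k(\theta_{n+1},\ldots,\theta_{n+m})\bigr)$ for $k \le n$ and $f_{n+i}(\underline{\theta}) = -q^{-t}\theta_{n+i}$ for $1 \le i \le m$.

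Theorem \ref{thm for tran} then produces a polynomial vector $(c_1,\ldots,c_n,b_1,\ldots,b_m)$, and, reading its conclusion through the norm estimate $\|\beta_k - f_k(\cdot)\|\le 1$ that is actually established in the proof, one obtains $\|q^{-t}b_i\| \le 1$, i.e.\ $\|b_i\| \le q^t$, for $k = n+i$, and $\|q^s(\alpha_k - c_k - L_k(\underline{b}))\| \le 1$, i.e.\ $\|\alpha_k - c_k - L_k(\underline{b})\| \le q^{-s}$, for $k \le n$; since $c_k$ is a polynomial, the latter implies $|\langle L_k(\underline{b})-\alpha_k\rangle| \le q^{-s}$, as required. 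The main obstacle is the initial bookkeeping in the first paragraph: picking $g_k$'s and $\underline{\beta}$ that simultaneously expose both scales $q^{-s}$ and $q^t$ appearing on the right-hand side of (\ref{tran lem eq8}), reproduce $\underline{u}^T\underline{\alpha}$ as $\sum_k g_k(\underline{\xi})\beta_k$, and keep the associated matrix $G$ invertible so that the companion forms $f_k$ exist; once this choice is made the rest reduces to the inequality $\|M_i(\underline{u}) - v_i\| \ge |\langle M_i(\underline{u})\rangle|$ and a block inversion.
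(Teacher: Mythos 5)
Your argument is correct and, modulo reordering the coordinates of $\underline{\theta}$ and $\underline{\xi}$, is exactly the paper's: the same $g_k$, $f_k$, $\underline{\beta}$ appear (with your derivation of $f_k$ via $F=G^{-1}$ simply making explicit what the paper leaves implicit), and the bounds $\|b_j\|\le q^t$ and $|\langle L_j(\underline{b})-\alpha_j\rangle|\le q^{-s}$ are read off from the estimate $\|\beta_k-f_k(\underline{b})\|\le 1$ actually established in the proof of Theorem~\ref{thm for tran}. The one thing to tidy is that $q^{-s}$ and $q^t$ are real numbers, not elements of $\mathbb{F}_q((z^{-1}))$, so as in the paper you should fix $C,X\in\mathbb{F}_q((z^{-1}))$ with $\|C\|=q^{-s}$, $\|X\|=q^t$ (e.g.\ $C=z^{-s}$, $X=z^t$) and use these in place of the scalars when defining $g_k$, $f_k$, $\underline{\beta}$; every norm computation you wrote then goes through verbatim.
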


\begin{proof}
This is a special case of Theorem \ref{thm for tran}.
Let $C, X$ be in $\mathbb{F}_q((z^{-1}))$ with $\|C\|=q^{-s}$, $\|X\|=q^t$.
Let
\begin{equation*}
\begin{split}
&\underline{\theta}=(\underline{x},\underline{z})=(x_1, \cdots,x_m,z_1, \cdots,z_n),\\
&\underline{\xi}=(\underline{v},\underline{u})=(v_1, \cdots,v_m,u_1, \cdots,u_n),\\
&f_k(\underline{\theta})=\left\{
             \begin{array}{lr}
                C^{-1}(L_k(\underline{x})+z_k), \;\;\;\text{for}\;\;\;k\le n,   \\
                X^{-1}x_{k-n}, \;\;\;\text{for}\;\;\;n<k\le l,
             \end{array}
\right.\\
&g_k(\underline{\xi})=\left\{
             \begin{array}{lr}
                 C u_k,\;\;\;\text{for}\;\;\;k\le n ,  \\
                 X (v_{k-n}-M_{k-n}(\underline{u})), \;\;\;\text{for}\;\;\;n<k\le l,
             \end{array}
\right.
\end{split}
\end{equation*}
and $\underline{\beta}=(C^{-1}\underline{\alpha},\underline{0})$.
The corollary then follows from Theorem \ref{thm for tran}.
\end{proof}

\begin{lemma}[Transference lemma]
\label{tranlemma}
Let $s$ and $t$ be positive integers.
Suppose that the inequality
$$
M(\underline{y})\ge q^{-t}
$$
holds for any non-zero polynomial $n$-tuple $\underline{y}$ of norm $\|\underline{y}\|\le q^s$.
Then, for all $n$-tuples $(\theta_1, \cdots, \theta_n)$ in $ \mathbb{F}_q((z^{-1}))^n$,
there exists a polynomial vector $\underline{x}$ with $\|\underline{x}\|\le q^t$ such that
$$
\max_{1\le i\le n}|\langle L_i(\underline{x})-\theta_i\rangle|\le q^{-s}.
$$
\end{lemma}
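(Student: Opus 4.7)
The plan is to deduce the Transference Lemma directly from Corollary \ref{cor for tran}, applied with $\underline{\alpha}=(\theta_1,\ldots,\theta_n)$ and the same parameters $s,t$. That corollary provides a polynomial vector $\underline{b}\in\mathbb{F}_q[z]^m$ with $\|b_j\|\le q^t$ for $1\le j\le m$ and $|\langle L_i(\underline{b})-\theta_i\rangle|\le q^{-s}$ for $1\le i\le n$, so setting $\underline{x}:=\underline{b}$ gives exactly the conclusion of the lemma. The only thing that must still be checked is that the hypothesis (\ref{tran lem eq8}) of the corollary holds, under our standing assumption that $M(\underline{y})\ge q^{-t}$ for every non-zero polynomial $n$-tuple $\underline{y}$ with $\|\underline{y}\|\le q^s$.

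To verify (\ref{tran lem eq8}) I would argue by a simple dichotomy on $\|\underline{u}\|$. The crucial observation is that for every power series $x$ one has $|\langle x\rangle|<1$, since $[x]\in\mathbb{F}_q[z]$ and $\|x-[x]\|=\|\{x\}\|<1$ by the very definition of the norm; in particular the left-hand side of (\ref{tran lem eq8}) is strictly less than $1$ for every $\underline{u}$. If $\|\underline{u}\|>q^s$, then, because norms in $\mathbb{F}_q((z^{-1}))$ are integer powers of $q$, we actually have $\|\underline{u}\|\ge q^{s+1}$, hence $q^{-s}\|\underline{u}\|\ge q\ge 1$. If instead $0<\|\underline{u}\|\le q^s$, the hypothesis of the lemma forces $M(\underline{u})\ge q^{-t}$, whence $q^tM(\underline{u})\ge 1$. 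In either case the maximum on the right-hand side of (\ref{tran lem eq8}) is at least $1$, and thus dominates the left-hand side; the case $\underline{u}=0$ reduces to $0=0$.

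Because all of the serious work — the introduction of the auxiliary linear forms $f_k,g_k$, Mahler's analogue of Minkowski's successive minima theorem, and the matrix inversion used to produce $\underline{b}$ — is already carried out inside Theorem \ref{thm for tran} and Corollary \ref{cor for tran}, I do not expect a genuine obstacle here. The only mildly subtle point is the non-Archimedean jump from $\|\underline{u}\|>q^s$ to $\|\underline{u}\|\ge q^{s+1}$, which is what allows the hypothesis on vectors of norm at most $q^s$ to be upgraded into an inequality valid for every polynomial vector $\underline{u}$, and hence into the applicability of the corollary.
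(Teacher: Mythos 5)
Your proof is correct and follows essentially the same route as the paper: both arguments reduce the lemma to Corollary \ref{cor for tran} and verify hypothesis (\ref{tran lem eq8}) by the same dichotomy on $\|\underline{u}\|$, using that $|\langle\cdot\rangle|\le q^{-1}$ while the right-hand side is at least $1$ in either regime (the paper phrases the small-norm case via the hypothesis $M(\underline{y})\ge q^{-t}$ and the large-norm case via the bound $\le q^{-1}$ on the left-hand side, exactly as you do). Your explicit observation that $\|\underline{u}\|>q^s$ forces $\|\underline{u}\|\ge q^{s+1}$ in the non-Archimedean norm is the same point the paper uses implicitly.
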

\begin{proof}
We apply Corollary \ref{cor for tran} with $\underline{u}=\underline{y}$
and $\underline{\alpha}=\underline{\theta}$.
If $\|\underline{y}\|> q^s$,
then the inequality (\ref{tran lem eq8}) holds,
since the left hand side of inequality (\ref{tran lem eq8}) is not greater than $\frac{1}{q}$.
If $\|\underline{y}\|\le q^s$, then,
since $M(\underline{y})\ge q^{-t}$,
the right hand side of inequality (\ref{tran lem eq8}) is greater than 1
and the inequality (\ref{tran lem eq8}) also holds.
By Corollary \ref{cor for tran},
the proof is established.
\end{proof}


\begin{proof}[Proof of Theorem \ref{kronecker}]
First of all,
we suppose that for every $\eps>0$,
there is a polynomial vector $\underline{x}$ such that simultaneously
  $|\langle L_i(\underline{x})-\theta_i\rangle|\leq\eps$, $(1\le i\le n).$
If $\underline{u}=(u_1,\cdots,u_n)^T$ is any polynomial vector
  such that $A^T\underline{u}$ is in $ \mathbb{F}_q[z]^m$, then
$$
u_1L_1(\underline{x})+\cdots+u_nL_n(\underline{x})=\underline{u}^TA\underline{x}\in\mathbb{F}_q[z].
$$
It follows that
\begin{equation*}
\begin{split}
|\langle u_1\theta_1+\cdots+u_n\theta_n\rangle|
&=|\langle u_1(L_1(\underline{x})-\theta_1)+\cdots+u_n(L_n(\underline{x})-\theta_n)\rangle|\\
&\leq \max\{|\langle u_1(L_1(\underline{x})-\theta_1)\rangle|,\cdots,
|\langle u_n(L_n(\underline{x})-\theta_n)\rangle|\}\\
&\leq \|\underline{u}\|\eps.
\end{split}
\end{equation*}
Since $\eps$ is arbitrary,
we have
$$|\langle u_1\theta_1+\cdots+u_n\theta_n\rangle|=0.$$
Thus
$$u_1\theta_1+\cdots+u_n\theta_n\in \mathbb{F}_q[z].$$

Now we turn to prove that (2) implies (1),
with the help of Corollary \ref{cor for tran}.

For every $\eps>0$,
there is a positive integer $s$ such that $q^{-s}\leq \eps$.

If $|\langle u_1\theta_1+\cdots+u_n\theta_n\rangle|=0$,
then the inequality (\ref{tran lem eq8}) obviously holds.
Otherwise, we have $\max_{1\le i\le m}|\langle M_i(\underline{u})\rangle|>0$ by the assumption.

Since $|\langle u_1\theta_1+\cdots+u_n\theta_n\rangle|\leq q^{-1}$,
inequality (\ref{tran lem eq8}) is satisfied if $\|\underline{u}\|\geq q^s$.
For the finitely many polynomial vectors $\underline{u}$ whose norm is less than $q^s$,
inequality (\ref{tran lem eq8}) still holds if we choose the integer $t$ large enough.
Then the proof is completed by using Corollary \ref{cor for tran}.
\end{proof}


\section{Proof of the Theorem \ref{mainthm}}
We begin by proving that the inequalities
\begin{equation}
\label{proofeq1}
\omega(A,\underline{\theta})\geq  \frac{1}{\widehat{\omega}(A^T)} \;\;\;  \text{and}\;\;\;
\widehat{\omega}(A,\underline{\theta})\geq  \frac{1}{\omega(A^T)}
\end{equation}
hold for all vectors $\underline{\theta}=(\theta_1, \cdots, \theta_n)^T$ in $\mathbb{F}_q((z^{-1}))^n$.

For the first inequality, we can clearly assume that $\widehat{\omega}(A^T)$ is finite.
Let $\omega>\widehat{\omega}(A^T)$ be a real number.
By the definition of the exponent $\widehat{\omega}(A^T)$,
there exists a real number $H$,
which may be chosen arbitrarily large, such that
\begin{equation}
\label{proofeq2}
M(\underline{y})\ge H^{-\omega},
\end{equation}
for any  non-zero polynomial vector $\underline{y}$ of norm at most equal to $H$.
Let $s, t$ be positive integers such that $H^{-\omega}\ge q^{-t}>q^{-1}H^{-\omega}$
and $q^s\le H<q^{s+1}$.
Then we have $M(\underline{y})\ge H^{-\omega}\ge q^{-t}$
for any  non-zero polynomial vector $\underline{y}$ of norm at most equal to $q^s$.
By Lemma \ref{tranlemma},
there exists a polynomial $n$-tuple $\underline{x}$ with $\|\underline{x}\|\le q^t$ such that
$$
\max_{1\le i\le n}|\langle L_i(\underline{x})-\theta_i\rangle|\le q^{-s}\le qH^{-1}
<q^{1+\frac{1}{\omega}}q^{-t\frac{1}{\omega}}
<q^{1+\frac{1}{\omega}}\|\underline{x}\|^{-\frac{1}{\omega}}.
$$
This shows that $\omega(A,\underline{\theta})\geq  \frac{1}{\omega}$.

For the second inequality of (\ref{proofeq1}),
we can clearly assume that ${\omega}(A^T)$ is finite.
For $\omega>\omega(A^T)$ and all real number $H$ with sufficiently large,
the inequality (\ref{proofeq2}) is satisfied for
any non-zero polynomial vector $\underline{y}$ of norm $\|\underline{y}\|\le H$.
We argue in a similar way as in the proof of the first inequality.
We omit the details.

We now prove that
\begin{equation}
\label{proofeq3}
\omega(A,\underline{\theta})\leq  \frac{1}{\widehat{\omega}(A^T)} \;\;\;  \text{and}\;\;\;
\widehat{\omega}(A,\underline{\theta})\leq  \frac{1}{\omega(A^T)}
\end{equation}
hold for almost all vectors $\underline{\theta}=(\theta_1, \cdots, \theta_n)^T$
in $\mathbb{F}_q((z^{-1}))^n$.

By the formula $\underline{y}^TA\underline{x}=\underline{x}^TA^T\underline{y}$,
it is easily seen that
$$
y_1\theta_1+ \cdots + y_n\theta_n=
\sum_{j=1}^mx_jM_j(y_1,\cdots,y_n)-\sum_{i=1}^ny_i(L_i(x_1,\cdots,x_m)-\theta_i),
$$
from which it follows that
\begin{equation}
\label{proofeq4}
|\langle y_1\theta_1+\cdots+y_n\theta_n\rangle|\le
\max\left\{\|\underline{y}\|\max_{1\le i\le n}|\langle L_i(\underline{x})-\theta_i\rangle|,
\|\underline{x}\|M(\underline{y})\right\},
\end{equation}
for all polynomial vectors $\underline{x}=(x_1,\cdots,x_m)^T$ and $\underline{y}=(y_1,\cdots,y_n)^T$.

We follow the notations in Section 3 and denote by
$$
\underline{y}_i=(y_{i1}, \cdots,y_{in})^T\;\;\;\text{and}\;\;\;Y_i=\|\underline{y}_i\|\;\;\;(i\ge1)
$$
the sequence of best approximations associated with the matrix $A^T$.

By Lemma \ref{bestlemma},
for almost all $\underline{\theta}$ in $\mathbb{F}_q((z^{-1}))^n$,
the inequality
\begin{equation}
\label{proofeq5}
|\langle y_{i1}\theta_1+ \cdots+y_{in}\theta_n\rangle|\ge Y_i^{-\delta}
\end{equation}
holds for all $\delta>0$ and any index $i$ large enough.
Let us fix two real numbers $\delta$ and $\omega$ such that
$$
0<\delta<\omega<\widehat{\omega}(A^T).
$$
Let $\underline{x}$ be a polynomial $m$-tuple with sufficiently large norm $\|\underline{x}\|$,
and let $k$ be the index defined by the inequality
$$
Y_k\le \|\underline{x}\|^{\frac{1}{\omega-\delta}}<Y_{k+1}.
$$
This gives
$$
Y_{k+1}^{\omega}> \|\underline{x}\|^{\frac{\omega}{\omega-\delta}}\ge \|\underline{x}\|Y_k^{\delta}.
$$

By (iii) of Lemma \ref{property of best appro},
we have
$$
\|\underline{x}\|M(\underline{y}_k)\le \|\underline{x}\|Y_{k+1}^{-\omega}< Y_k^{-\delta}.
$$
Using (\ref{proofeq4}) with $\underline{y}=\underline{y}_k$  and (\ref{proofeq5}) with $i=k$,
we deduce that
\begin{equation*}
Y_k^{-\delta}
\leq \|\underline{y}_k\|\max_{1\le i\le n}|\langle L_i(\underline{x})-\theta_i\rangle|
\leq Y_k\max_{1\le i\le n}|\langle L_i(\underline{x})-\theta_i\rangle|,
\end{equation*}
which gives
$$
|\langle A\underline{x}-\underline{\theta}\rangle|
= \max_{1\le i\le n}|\langle L_i(\underline{x})-\theta_i\rangle|
\geq Y_k^{-1-\delta}
\geq \|\underline{x}\|^{-\frac{1+\delta}{\omega-\delta}}.
$$
This implies
$$
\omega(A,\underline{\theta})\le \frac{1+\delta}{\omega-\delta}.
$$
Let $\delta$ and $\omega$ be arbitrarily close to  $0$ and to $\widehat{\omega}(A^T)$, respectively.
Then, it is immediate that the first inequality of (\ref{proofeq3}) holds.

The second upper bound can be handled in the same manner.
Let us fix now two real numbers $\delta$ and $\omega$ such that
$$
0<\delta<\omega<\omega(A^T).
$$
Let $\underline{x}$ be a polynomial $m$-tuple with $\|\underline{x}\|\le H_k
:= {Y_k^{\omega-\delta}}/{2}$.
By (iv) of Lemma \ref{property of best appro},
there exist infinitely many integers $k\ge 1$
such that $M(\underline{y}_k) \le Y_k^{-\omega}$,
thus, for which,
$$
\|\underline{x}\|M(\underline{y}_k)
\leq \|\underline{x}\|Y_{k}^{-\omega}
\leq  \frac{Y_k^{-\delta}}{2}.
$$
Applying again inequality (\ref{proofeq4}),
we obtain
$$
Y_k^{-\delta}
\le \|\underline{y}_k\|\max_{1\le i\le n}|\langle L_i(\underline{x})-\theta_i\rangle|
\le Y_k\max_{1\le i\le n}|\langle L_i(\underline{x})-\theta_i\rangle|,
$$
which yields
$$
|\langle A\underline{x}-\underline{\theta}\rangle|
= \max_{1\le i\le n}|\langle L_i(\underline{x})-\theta_i\rangle|
\ge Y_k^{-1-\delta}
= 2^{-\frac{1+\delta}{\omega-\delta}}H_k^{-\frac{1+\delta}{\omega-\delta}}.
$$
Since the above lower bound holds for any polynomial $\underline{x}$
whose norm is less than $H_k$
and for infinitely many $k\ge 1$,
noting that the sequence $(H_i)_{i \ge 1}$ tends to infinity,
it follows that
$$
\widehat{\omega}(A,\underline{\theta})\le \frac{1+\delta}{\omega-\delta}.
$$
Choosing $\delta$ and $\omega$ arbitrarily close to  $0$ and to $\omega(A^T)$ respectively,
we get the second inequality of (\ref{proofeq3}),
and the proof of first assertion is completed.

\medskip

It only remains to prove that
$$
\widehat{\omega}(A,\underline{\theta}) \le \omega(A),
$$
when $\underline{\theta} =(\theta_1, \cdots, \theta_n)^T$ is not in
$A\mathbb{F}_q[z]^m+\mathbb{F}_q[z]^n$.

For any $\underline{x}$ in $\mathbb{F}_q[z]^m$,
set $L(\underline{x})=|\langle A\underline{x}-\underline{\theta}\rangle|$.
By the denseness of $A\mathbb{F}_q[z]^m$ in $\mathbb{F}_q((z^{-1}))^n$
(which is implied by Theorem \ref{kronecker}) and following
the same method as in the homogeneous case,
we can construct a sequence of polynomial vectors $\underline{x}_i$,
$i\ge1$, in $ \mathbb{F}_q[z]^m$
associated with $L(\underline{x}_1),L(\underline{x}_2),\cdots$
which satisfy the following properties.
Set $\|\underline{x}_i\|=H_i$ and $L_i=L(\underline{x}_i)$,
then we have
$$
1=H_1<H_2<\cdots\;\;\;\text{and}\;\;\;L_1>L_2>\cdots
$$
and $L(\underline{x})\ge L_i$ for all polynomial vectors $\underline{x}$ with $\|\underline{x}\|<H_{i+1}$.
Here we also call the above sequence $(\underline{x}_i)_{i\geq1}$
a sequence of best approximations related to
$L_1,L_2,\cdots$.
By definition of $\widehat{\omega}(A,\underline{\theta})$ and best approximation,
for any $\omega<\widehat{\omega}(A,\underline{\theta})$,
the inequality
$$
0<|\langle A\underline{x}_i-\underline{\theta}\rangle|\le H_{i+1}^{-\omega}
$$
holds for any index $i$ sufficiently large in terms of $\omega$.
By using the triangle inequality,
we conclude that
\begin{equation*}
\begin{split}
|\langle A(\underline{x}_i-\underline{x}_{i-1})\rangle|
&=|\langle A\underline{x}_i-\underline{\theta}-(A\underline{x}_{i-1}-\underline{\theta})\rangle|\\
&\le \max\left\{|\langle A\underline{x}_i-\underline{\theta}\rangle|,
|\langle A\underline{x}_{i-1}-\underline{\theta}\rangle|\right\}\\
&\le H_i^{-\omega},
\end{split}
\end{equation*}
which gives that $\omega(A)\ge \omega$.
Choosing $\omega$ arbitrarily close to $\widehat{\omega}(A,\underline{\theta})$,
we complete the proof.


\section{Proof of Theorem \ref{prop}}

Before proving Theorem \ref{prop} we establish an auxiliary lemma.

\begin{lemma}
\label{dimension}
Let $l \ge 2$ be an integer.
For a sequence $(\underline{h}_k)_{k\ge1}$ of polynomial vectors such that
$\|\underline{h}_k\|\geq q^l\|\underline{h}_{k-1}\|$ for $k\geq 2$,   
set
$$
S_{\{\underline{h}_k\}}=\{\underline{\theta}\in \mathbb{I}^n\colon
\; \text{there exists}\;\;  k_0(\underline{\theta})\;\; \text{such that}\;\;
|\langle\underline{h}_k\underline{\theta}\rangle| \geq q^{-1}
\;\;\;\text{for all}\;\;\;k\ge k_0(\underline{\theta})\}.
$$
Then we have $\dim_H S_{\{\underline{h}_k\}}\ge n-\frac{1}{l}$.
\end{lemma}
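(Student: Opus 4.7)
The plan is to construct a Cantor-type subset $E_\infty$ of $S_{\{\underline{h}_k\}}$ by a digit-by-digit selection inside $\mathbb{I}^n$, and then to estimate $\dim_H E_\infty$ from below via the mass distribution principle.

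First I would rephrase the defining inequality digit-wise. Write $\theta_i = \sum_{j\ge 1} a_{i,j} z^{-j}$ and $h_{k,i} = \sum_s c_{k,i,s} z^s$, and set $d_k := \log_q \|\underline{h}_k\|$. A direct computation shows that the coefficient of $z^{-1}$ in $\underline{h}_k \underline{\theta}$ is an $\mathbb{F}_q$-linear form $\Lambda_k(\underline{\theta})$ in the digits $a_{i,j}$ with $1 \le i \le n$, $1 \le j \le d_k + 1$, and the condition $|\langle \underline{h}_k \underline{\theta}\rangle| \ge q^{-1}$ is exactly $\Lambda_k(\underline{\theta}) \ne 0$. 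Since some $c_{k, i_0, d_k}$ is nonzero by definition of $d_k$, the coefficient of $a_{i_0, d_k+1}$ in $\Lambda_k$ is nonzero; this non-degeneracy on the ``top'' digit drives the whole construction.

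Next I would build the Cantor scheme. Call a \emph{cylinder of level $M$} any subset of $\mathbb{I}^n$ cut out by prescribing all digits $a_{i,j}$ with $j \le M$; it has diameter $q^{-M}$ and measure $q^{-nM}$. Starting from $E_0 = \mathbb{I}^n$ (``level $d_0 := -1$''), inductively let $E_k$ be the union of cylinders of level $d_k + 1$ contained in $E_{k-1}$ on which $\Lambda_1, \ldots, \Lambda_k$ are all nonzero. Inside any parent cylinder of $E_{k-1}$, the equation $\Lambda_k = 0$ becomes an affine $\mathbb{F}_q$-equation in the $n(d_k - d_{k-1})$ new digits with the coefficient of $a_{i_0, d_k + 1}$ nonzero, so exactly $M_k := (q-1)\,q^{n(d_k - d_{k-1}) - 1}$ of the $q^{n(d_k - d_{k-1})}$ children survive, uniformly over parents. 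Hence $E_k$ has $N_k = \prod_{j=1}^k M_j = (q-1)^k\, q^{n(d_k+1) - k}$ cylinders and $E_\infty := \bigcap_k E_k \subseteq S_{\{\underline{h}_k\}}$.

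Finally I would apply the mass distribution principle to the uniform measure $\mu$ assigning mass $1/N_k$ to each cylinder of $E_k$. For a ball $B \subseteq \mathbb{I}^n$ of radius $q^{-r}$ with $d_k + 1 \le r \le d_{k+1}$ meeting $E_\infty$, the same linear-algebra count restricted to $B$ gives $\mu(B) \le q^{n(d_k+1-r)}/N_k = q^{k - nr}/(q-1)^k$; at $r = d_{k+1}+1$ one has $\mu(B) \le 1/N_{k+1}$, handled in the same spirit. Taking $s = n - 1/l$ and using the growth hypothesis in the form $d_k \ge l(k-1) + d_1$, the ratio $\mu(B)/q^{-rs}$ reduces to $q^{-r/l + k}/(q-1)^k \le q^{1 - (d_1+1)/l}/(q-1)^k$, which is uniformly bounded in $k$ since $q \ge 2$. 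The mass distribution principle then yields $\dim_H E_\infty \ge n - 1/l$, and hence $\dim_H S_{\{\underline{h}_k\}} \ge n - 1/l$. The main subtlety is the uniformity of this mass bound across radii straddling consecutive construction levels $d_k + 1$ and $d_{k+1} + 1$; that is essentially the only place where the hypothesis $l \ge 2$ and the geometric growth of $\|\underline{h}_k\|$ combine to yield the exponent $1/l$.
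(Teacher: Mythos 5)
Your proof is correct and yields the same lower bound, but via a genuinely cleaner route. The paper partitions $\mathbb{I}^n$ into balls of radius $q^{-d_i-1}$, introduces the resonant sets $R_i=\{\underline{x}:\underline{h}_i\underline{x}\in\mathbb{F}_q[z]\}$, and deletes every ball lying within $q^{-d_i-1}$ of some $R_i$; the surviving count $m_{i+1}\ge q^{(d_{i+1}-d_i)n}(1-q^{-1})$ is obtained by a geometric argument (separated nets $\Lambda_i(r)$ in each affine piece of $R_i$, the ball intersection property, and a distance estimate between distinct pieces), after which they invoke Falconer's Example~4.6. You instead observe that in the ultrametric setting $|\langle\underline{h}_k\underline{\theta}\rangle|\ge q^{-1}$ is \emph{exactly} the condition that a single $\mathbb{F}_q$-linear form $\Lambda_k$ in the first $n(d_k+1)$ digits of $\underline{\theta}$ be nonzero, with a guaranteed nonzero coefficient on a top digit $a_{i_0,d_k+1}$; this turns the selection step into pure linear algebra over $\mathbb{F}_q$ and gives the \emph{exact} branching number $M_k=(q-1)q^{n(d_k-d_{k-1})-1}$ per parent cylinder, with no geometric estimates at all. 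You then close via the mass distribution principle rather than Falconer's ready-made formula, which is essentially equivalent, and your handling of the cross-level radii $d_k+1\le r\le d_{k+1}+1$ is the one delicate point and it checks out (indeed one can verify that $\mu(B)=q^{n(d_k+1-r)}/N_k$ holds with equality throughout that range, consistently across the seam $r=d_{k+1}+1$). In short: same Cantor subset, same branching counts and exponent, but your digit-level, $\mathbb{F}_q$-linear reformulation replaces the Euclidean-style hyperplane/net argument that the paper imports from the real case, and is better adapted to the power series setting.
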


\begin{proof}
Our strategy to prove this lemma is as follows.
First,
we define some partitions of $\mathbb{I}^n$ and construct a family of balls covering the points
which do not satisfy the condition in the definition of the set $S_{\{\underline{h}_k\}}$.
Then we delete the family of balls from the partitions to construct a
Cantor subset contained in $S_{\{\underline{h}_k\}}$.

For any $i\geq1$, define $d_i$ by
$\|\underline{h}_i\|=q^{d_i}$ and set
$$
\Gamma_i=z^{-d_i-1}\mathbb{F}_q[z]^n\cap \mathbb{I}^n.
$$
It is clear that all distinct elements $\underline{x},\underline{y}$ in $\Gamma_i$ satisfy
\begin{equation}
\label{lattice eq1}
\|\underline{x}-\underline{y}\|\ge q^{-d_i-1}.
\end{equation}
Now we define a partition of $\mathbb{I}^n$.
For each $i\ge1$,
let $\mathscr{C}_i$ be the family of balls $B(\underline{c},q^{-d_i-1})$ centered
at some point $\underline{c}$ in $\Gamma_i$,
i.e.,
$$
\mathscr{C}_i=\{B(\underline{c},q^{-d_i-1})\colon \underline{c}\in \Gamma_i\}.
$$
By (\ref{lattice eq1}) and the ball intersection property,
any two distinct balls in $\mathscr{C}_i$ have empty intersection.
Each ball in $\mathscr{C}_i$ has measure $q^{(-d_i-1)n}$.
Since there are exactly $q^{(d_i+1)n}$ of these balls,
they do indeed define a partition of $\mathbb{I}^n$.

For any $i\ge 1$,
we consider the resonant set
$$
R_i=\{\underline{x}\in \mathbb{I}^n\colon \underline{h}_i\underline{x}
=p \;\;\;\text{for some}\;\;\;p\in \mathbb{F}_q[z]\}.
$$
Since $\underline{x}$ is in $\mathbb{I}^n$,
each resonant set $R_i$ is contained in one of the affine spaces
$$
R_i(r)=\{\underline{x}\in \mathbb{I}^n\colon \underline{h}_i\underline{x}=r\},
\;\;\;\text{where}\;r \;\text{is in}\;\; \mathbb{F}_q[z]
\;\;\text{with}\;\;\|r\|\leq \|\underline{h}_i\|.
$$

In each $R_i(r)$,
we choose a subset $\Lambda_i(r)$ such that the distance between
any two different points in $\Lambda_i(r)$ is at least $q^{-d_i-1}$
and such that, for any point $\underline{\xi}$ in $R_i(r)$,
there is a point $\underline{\eta}$ in $\Lambda_i(r)$ at a distance to $\underline{\xi}$
less than $q^{-d_i-1}$.
Let $\Lambda_i$ be the union of the sets $\Lambda_i(r)$ where $\|r\|\leq \|\underline{h}_i\|$.
Set
$$
\mathcal{G}_i=\{B(\underline{c},q^{-d_i-1})\colon \underline{c}\in \Lambda_i\}.
$$
If $\underline{\theta}$ in $\mathbb{I}^n$ satisfies
$|\langle \underline{h}_i\underline{\theta}\rangle|<\frac{1}{q}$, then we have
$$
\|\underline{h}_i\|\text{dist}_{\infty}(\underline{\theta},R_i)
\leq |\langle \underline{h}_i\underline{\theta}\rangle|
<\frac{1}{q},
$$
where $\text{dist}_{\infty}$ denotes the distance associated with the supremum norm.
Then,
$$
\text{dist}_{\infty}(\underline{\theta},R_i)< q^{-d_i-1}.
$$
which implies that there exists $\underline{\xi}$ in $R_i$ such that
$$
\|\underline{\theta}-\underline{\xi}\|<q^{-d_i-1},
$$
and, consequently,
 $\underline{\theta}$ is contained in some ball which belongs to $\mathcal{G}_i$.

Let $\mathcal{D}_i=\{B\in \mathscr{C}_i \colon B\cap \mathcal{G}_i=\emptyset\}$.
Define
$$
E_i=\bigcup_{B\in \mathcal{D}_i}B \;\;\;\text{and}\;\;\;E=\bigcap_{i=1}^{\infty}E_i.
$$
Then, $E\subset S_{\{\underline{h}_k\}}$.

Now we determine the Hausdorff dimension of the set $E$.
By the ball intersection property, the distance between
any two balls in $\mathcal{D}_i$ is $\epsilon_i=q^{-d_i-1}$.
Since $\mathscr{C}_i$ is a partition of $\mathbb{I}^n$,
for any ball $B$ in $\mathcal{D}_i$,
the number of balls of $\mathscr{C}_{i+1}$ contained in $B$ is $q^{(d_{i+1}-d_i)n}$.

For any $\underline{\xi}$ in $R_{i+1}(r)$, $\underline{\theta}$ in $R_{i+1}(t)$,
where $r,t$ are in $ \mathbb{F}_q[z]$, we obtain
$$
1\le \|r-t\|\le\|\underline{h}_{i+1}\underline{\xi}-\underline{h}_{i+1}\underline{\theta}\|\le
\|\underline{h}_{i+1}\|\|\underline{\xi}-\underline{\theta}\|,
$$
hence
$$
\|\underline{\xi}-\underline{\theta}\|\geq \frac{1}{\|\underline{h}_{i+1}\|}.
$$
Consequently, the number of affine spaces
which can intersect a ball $B$ in $\mathcal{D}_i$ is at most $q^{d_{i+1}-d_i-1}$.
Since every such affine space contains $q^{(d_{i+1}-d_i)(n-1)}$ points of $\Lambda_{i+1} \cap B$,
the number of balls of $\mathcal{D}_{i+1}$ contained in the ball $B$ is at least
$$
m_{i+1}=q^{(d_{i+1}-d_i)n}-q^{(d_{i+1}-d_i)n-1}=q^{(d_{i+1}-d_i)n}(1-q^{-1})
\geq 2^{-1}q^{(d_{i+1}-d_i)n}.
$$
Since $\|\underline{h}_k\|\geq q^l\|\underline{h}_{k-1}\|$ for $k\geq 2$,
we have $d_k\geq (k-1) l$.
By this fact and Example 4.6 of \cite{Falconer}, we have
\begin{equation*}
\begin{split}
\dim_H E
&\geq \liminf_{k\to+\infty}\frac{\log m_1m_2\cdots m_{k-1}}{-\log m_k \epsilon_k^n}\cdot n\\
&\geq \liminf_{k\to+\infty}\frac{k\log\frac{1}{2}+nd_{k-1}\log q}{-\log \frac{1}{2}+n(d_{k-1}+1)\log q}
\cdot n\\
&\geq \liminf_{k\to+\infty}\frac{nd_{k-1}-k}{n(d_{k-1}+2)}\cdot n\\
&\geq \liminf_{k\to+\infty}\frac{nd_{k-1}-\frac{d_{k-1}}{l}-2}{n(d_{k-1}+2)}\cdot n\ge n-\frac{1}{l}.
\end{split}
\end{equation*}
The proof is complete.

\end{proof}

Now we prove Theorem \ref{prop}.

For a positive integer $l\geq2$,
we extract a subsequence $(\underline{y}_{\varphi_l(k)})_{k\ge1}$ from
the sequence of best approximations $(\underline{y}_k)_{k\ge1}$,
where the index function is an increasing function $\varphi_l:\mathbb{Z}_{\ge1}\to\mathbb{Z}_{\ge1}$
satisfying $\varphi_l(1)=1$ and,
for any integer $i\ge 2$,
\begin{equation}
\label{prop proofeq1}
Y_{\varphi_l(i)}\geq q^lY_{\varphi_l(i-1)}\;\;\;\text{and}\;\;\;
Y_{\varphi_l(i-1)+1}\geq q^{-2l}Y_{\varphi_l(i)}.
\end{equation}

Let
$$
\mathcal{J}_0=\{j\colon Y_{j+1}\geq q^lY_j\}.
$$
To define the function $\varphi_l$ we distinguish two cases,
according to whether the set $\mathcal{J}_0$ is finite or not.

If $\mathcal{J}_0$ is an infinite set,
then set $\varphi_l(1)=1$.
Suppose that $\varphi_l(i)$ has already been defined for $1\le i\le h'$,
and define $\varphi_l(h)$ to be the smallest element of $\mathcal{J}_0$ greater than $\varphi_l(h')$.
We let $\varphi_l(h-1)$ be the largest index $t\ge \varphi_l(h')$ for which $Y_{\varphi_l(h)}\ge q^lY_t$,
we let $\varphi_l(h-2)$ be the largest index $t\ge \varphi_l(h')$
for which $Y_{\varphi_l(h-1)}\ge q^lY_t$,
and so on until an index $t$ as above does not exist.
We have just defined $\varphi_l(h),\varphi_l(h-1),\cdots,\varphi_l(h-h_0)$.
Then, we set $h=h'+h_0+1$,
and the inequalities (\ref{prop proofeq1}) are satisfied for
$i=h'+1,\cdots,h'+h_0+1$.

If $\mathcal{J}_0$ is a finite set,
we denote by $g$ the largest of its elements, putting $g=1$ if $\mathcal{J}_0$ is empty.
We apply the above process to construct the initial values of the function $\varphi$
up to $g=\varphi_l(h)$.
Then, we define $\varphi_l(h+1)$ as the smallest index $t$ for which $Y_t\geq q^lY_{\varphi_l(h)}$.
We observe that $Y_{\varphi_l(h+1)-1}<q^l Y_{\varphi_l(h)}$ and
$Y_{\varphi_l(h)+1}\geq Y_{\varphi_l(h)}>q^{-l} Y_{\varphi_l(h+1)-1}>q^{-2l} Y_{\varphi_l(h+1)}$,
as required.
We continue in this way,
by defining $\varphi_l(h+2)$ as the smallest index $t$ for which $Y_t\geq q^l Y_{\varphi_l(h+1)}$,
and so on.
The inequalities  (\ref{prop proofeq1}) are then satisfied.

By Lemma \ref{dimension},
for any $\underline{\theta}$ in $S_{\{y_{\underline{\varphi}_l(i)}\}}$, it follows that
$$
|\langle y_{\varphi_l(i),1}\theta_1 +\cdots+y_{\varphi_l(i),n}\theta_n\rangle|\geq \frac{1}{q},
\;\text{for sufficiently large}\; i.
$$
Let $\underline{x}$ be a non-zero polynomial $m$-tuple whose norm is sufficiently large and
let $k$ be the index defined by the inequalities
$$
Y_{\varphi_l(k)}\leq q^{(2l+1)}q^{\frac{m}{n}}\|\underline{x}\|^{\frac{m}{n}}<Y_{\varphi_l(k+1)}.
$$
By Lemma \ref{property of best appro} and inequality (\ref{proofeq4})
with $\underline{y}=y_{\varphi_l(k)}$,
we have
$$
\frac{1}{q}
\leq \max\left\{q^{(2l+1)}q^{\frac{m}{n}}\|\underline{x}\|^{\frac{m}{n}}
|\langle A\underline{x}-\underline{\theta}\rangle|,
\|\underline{x}\|q^{\frac{n}{m}}Y_{\varphi_l(k)+1}^{-\frac{n}{m}}\right\}.
$$
By construction of the subsequence $(Y_{\varphi_l(i)})_{i\geq1}$,
we have $Y_{\varphi_l(k)+1}^{-1}Y_{\varphi_l(k+1)}\leq q^{2l}$,
so
$$
\|\underline{x}\|q^{\frac{n}{m}}Y_{\varphi_l (k)+1}^{-\frac{n}{m}}
< q^{-1}q^{-\frac{(2l+1)n}{m}}q^{\frac{n}{m}}q^{\frac{2ln}{m}}
=q^{-1},
$$
then
$$
\frac{1}{q}\leq q^{(2l+1)}q^{\frac{m}{n}}\|\underline{x}\|^{\frac{m}{n}}
|\langle A\underline{x}-\underline{\theta}\rangle|,
$$
which gives
$$
|\langle A\underline{x}-\underline{\theta}\rangle|\geq q^{-(2l+2)}
q^{-\frac{m}{n}}\|\underline{x}\|^{-\frac{m}{n}}.
$$
From this, we deduce that $S_{\{\underline{y}_{\varphi_l(i)}\}}\subset \Bad^\eps (A)$
with $\eps=q^{-(2l+2)}q^{-\frac{m}{n}}$, and then
$$
\dim_H \Bad^\eps (A)\geq n-\frac{1}{l},
$$
which implies the second assertion.

Recall that
$$
\Bad(A) := \underset{\eps>0}{\bigcup} \, \Bad^\eps (A)
= \{\underline{\theta}\in
\mathbb{I}^n :  \liminf_{\underline{x}\in \mathbb{F}_q[z]^m,\|\underline{x}\|\to \infty}
\|\underline{x}\|^{m/n} \cdot |\langle A \underline{x} - \underline{\theta} \rangle|  > 0  \}.
$$
We have just proved that, for any integer $l \ge 2$, we have
$$
S_{\{\underline{y}_{\varphi_l(i)}\}}\subset {\Bad}(A).
$$
Letting $l$ tend to infinity,
we obtain
$$
\dim_H {\Bad}(A)=n.
$$
This completes the proof of the theorem.


\section{Proof of Theorem \ref{higherdim}}

We use the same method as in the last section.
The next lemma can be seen as a sharpening of
Lemma \ref{dimension} when the sequence of norms of
the polynomial vectors increase very rapidly.

\begin{lemma}
\label{higherdimenlemma}
For any $\delta$ in $ (0, q^{-1}]$,
let $(\underline{h}_k)_{k\geq1}$  be a sequence  of polynomial vectors such that
$\frac{\|\underline{h}_{k+1}\|}{\|\underline{h}_k\|} \ge q\delta^{-1}$ for $k\geq1$
and $\lim_{k\to\infty}\|\underline{h}_k\|^{\frac{1}{k}}=\infty$.
Then, the set
$$
S_{\delta}=\{\underline{\theta}\in \mathbb{I}^n\colon\;\;\;
\text{there exists}\;\;\; k_0(\underline{\theta}) \;\;\text{such that}\;\;\;
|\langle\underline{h}_k\underline{\theta}\rangle|\geq\delta
\;\;\text{for all}\;\;k\geq k_0(\underline{\theta})\}
$$
has full Hausdorff dimension.
\end{lemma}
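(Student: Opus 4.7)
The plan is to mimic the Cantor-set construction used in the proof of Lemma \ref{dimension}, but to exploit the much faster growth of $\|\underline{h}_k\|$ in order to boost the Hausdorff dimension from $n-1/l$ up to the full value $n$. Without loss of generality I would write $\delta = q^{-s}$ with $s \ge 1$ an integer (rounding $\delta$ down to the next power of $q^{-1}$ only shrinks $S_\delta$). Setting $d_k := \log_q \|\underline{h}_k\|$, the two hypotheses translate to $d_{k+1} - d_k \ge s+1$ for every $k$ and $d_k / k \to \infty$.

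As in Section 6, at stage $k$ I would partition $\mathbb{I}^n$ into $q$-adic balls $\mathscr{C}_k$ of radius $\epsilon_k = q^{-d_k - s}$, identify the bad balls (those meeting the $\epsilon_k$-neighborhood of some affine hyperplane $R_k(r) = \{\underline{\theta} \in \mathbb{I}^n : \underline{h}_k \cdot \underline{\theta} = r\}$ with $r \in \mathbb{F}_q[z]$), and keep only the survivors $\mathcal{D}_k \subset \mathscr{C}_k$. The Cantor set $E := \bigcap_{k\ge 1}\, \bigcup_{B \in \mathcal{D}_k} B$ is then contained in $S_\delta$ by construction, since any $\underline{\theta} \in E$ satisfies $|\langle \underline{h}_k \underline{\theta}\rangle| \ge \delta$ from some index on.

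The essential combinatorial count runs as follows: inside a parent ball $B \in \mathcal{D}_{k-1}$ of radius $\epsilon_{k-1}$, there are $q^{n(d_k - d_{k-1})}$ children in $\mathscr{C}_k$; the integers $r$ for which $R_k(r)$ can intersect $B$ lie in an image ball of radius $\|\underline{h}_k\|\, \epsilon_{k-1} = q^{d_k - d_{k-1} - s}$, yielding at most a bounded multiple of $q^{d_k - d_{k-1} - s}$ hyperplanes, each of whose $\epsilon_k$-neighborhood occupies $q^{(n-1)(d_k - d_{k-1})}$ children. Hence the number $m_k$ of survivors satisfies $m_k \ge c_q\, q^{n(d_k - d_{k-1})}$ for a positive constant $c_q$ depending only on $q$ and $s$. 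Plugging into the dimension estimate from the proof of Lemma \ref{dimension},
$$
\dim_H E \ge \liminf_{k\to\infty} \frac{\log(m_1\cdots m_{k-1})}{-\log(m_k\,\epsilon_k^n)} \cdot n,
$$
the telescoping gives a numerator $\ge n\, d_{k-1}\log q - O(k)$ and a denominator $\le n(d_{k-1}+s)\log q + O(1)$. The hypothesis $d_{k-1}/k \to \infty$ renders the $O(k)$ error negligible compared with $d_{k-1}$, so the liminf of the ratio equals $1$ and $\dim_H E \ge n$, which forces $\dim_H S_\delta = n$.

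The main technical obstacle I anticipate is the careful count of surviving children in the boundary regime $s = 1$ (that is, $\delta = q^{-1}$), where the naive subtraction $q^{n(d_k-d_{k-1})} - q^{n(d_k-d_{k-1}) - s + 1}$ threatens to vanish. I would handle this by distinguishing open and closed $q$-adic balls carefully, exploiting that consecutive hyperplanes $R_k(r)$ are separated by exactly $q^{-d_k}$ while their $\epsilon_k$-neighborhoods together occupy only a $q^{-1}$ fraction of each parent ball, yielding $c_q \ge 1 - q^{-1} > 0$ exactly as in Lemma \ref{dimension}. Everything else is a routine tracking of constants.
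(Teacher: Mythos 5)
Your proposal is correct and follows the paper's own proof essentially verbatim: replace the $q^{-1}$ of Lemma~\ref{dimension} by $\delta$ (equivalently, use balls of radius $q^{-d_k}\delta$), run the identical Cantor construction, and observe that $d_k/k\to\infty$ makes the $O(k)$ term in the numerator of the Falconer estimate negligible, so the dimension lower bound reaches $n$. Your worry about the $s=1$ boundary case is unfounded---the bad-ball count is $q^{n(d_k-d_{k-1})-s}$, not $q^{n(d_k-d_{k-1})-s+1}$, so the surviving fraction is $1-q^{-s}\ge 1-q^{-1}>0$ without any extra care---but you recover the correct constant anyway, so the argument stands.
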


\begin{proof}
Since the proof is very similar to that of Lemma \ref{dimension},
we just give the necessary modifications here.

Let $\delta$ be in $ (0, q^{-1}]$.
For any $k \geq1$,
set $\|\underline{h}_k\|=q^{d_k}$.
We note that $\delta$ plays the role of $q^{-1}$ in the proof of Lemma \ref{dimension}.
The remaining part of the construction of a suitable subset can be done in a similar way.
Notice that, since $\frac{d_k}{k}$ tends to infinity with $k$,
we have
\begin{equation*}
\begin{split}
\dim_H E
&\geq \liminf_{k\to+\infty}\frac{\log m_1m_2\cdots m_{k-1}}{-\log m_k \varepsilon_k^n}\cdot n\\
&=\liminf_{k\to+\infty}\frac{k\log\frac{1}{2}+nd_{k-1}\log q}{-\log \frac{1}{2}+n(d_{k-1}+1)\log q}
\cdot n=n,
\end{split}
\end{equation*}
which completes the proof.

\end{proof}

Let us begin the proof of Theorem \ref{higherdim}.

Let
$$
\underline{y}_k=(y_{k1}, \cdots,y_{kn})^T,\;\;\;k\geq1,
$$
be the sequence of best approximations associated to the matrix $A^T$,
and set $Y_k:=\|\underline{y}_k\|$ for $k\geq1$.

Let $\delta$ be in $ (0, q^{-1}]$ and set $R=q\delta^{-1}$.
Since $Y_k^{\frac{1}{k}}$ tends to infinity with $k$,
the set
$$
\mathcal{J}_R=\{j\colon Y_{j+1}\geq RY_j\}.
$$
is an infinite set.
In the same way as in the proof of Theorem \ref{prop},
we can extract a subsequence $(\underline{y}_{\varphi(k)})_{k\geq1}$
of $(\underline{y}_k)_{k\geq1}$ with the property that
\begin{equation}
\label{subsequence}
Y_{\varphi(k)}\geq RY_{\varphi(k-1)},\;\;Y_{\varphi(k-1)+1}
\geq R^{-1}
Y_{\varphi(k)}, \;\;\text{for}\;\;k\geq2.
\end{equation}
We apply Lemma \ref{higherdimenlemma} to
$(\underline{y}_{\varphi(k)})_{k\geq1}$ and take
$\underline{\theta} = (\theta_1, \ldots , \theta_n)$ in the corresponding set $S_{\delta}$,
that is, satisfying
\begin{equation}
\label{neq6}
|\langle y_{\varphi(k)1}\theta_1+ \cdots+y_{\varphi(k)n}\theta_n\rangle|
\ge \delta,\;\text{for sufficiently large}\;k.
\end{equation}
Let $\underline{h}$ be a non-zero polynomial $m$-tuple whose norm is sufficiently large
and let $k$ be the index defined by the inequality
$$
Y_{\varphi(k)}\le qR\delta^{-\frac{m}{n}}\|\underline{h}\|^{\frac{m}{n}}<Y_{\varphi(k+1)}.
$$

By (\ref{proofeq4}), (\ref{subsequence}), and (ii) of Lemma \ref{property of best appro} with
$\underline{y}=\underline{y}_{\varphi(k)}$ and $\underline{x}=\underline{h}$,
since
$$
\|\underline{h}\|M(\underline{y}_{\varphi(k)})
\leq \|\underline{h}\|q^{\frac{n}{m}}Y_{\varphi(k)+1}^{-\frac{n}{m}}
< \delta(qR)^{-\frac{n}{m}}
q^{\frac{n}{m}}Y_{\varphi(k)+1}^{-\frac{n}{m}}Y_{\varphi(k+1)}^{\frac{n}{m}}\leq \delta,
$$
we have
$$
\delta
\leq Y_{\varphi(k)}|\langle A\underline{h}-\underline{\theta}\rangle|
\leq qR\delta^{-\frac{m}{n}}\|\underline{h}\|^{\frac{m}{n}}|\langle A\underline{h}
-\underline{\theta}\rangle|.
$$
Consequently, we get
$$
\|\underline{h}\|^{\frac{m}{n}}|\langle A\underline{h}-\underline{\theta}\rangle|
\ge\frac{\delta^{1+\frac{m}{n}}}{qR}
=\frac{\delta^{2+\frac{m}{n}}}{q^2}.
$$
By letting $\delta=q^{-1}$,
this gives the first assertion of Theorem \ref{higherdim}.

If $m=n=1$, $A = (\alpha)$, and the degrees of the partial quotients of $\alpha$ tend to infinity,
then the assumption of Lemma \ref{higherdimenlemma}
is satisfied for $h_k=Q_{k+N}$ for some constant $N\geq 0$.
For any $0<\delta\le\frac{1}{q}$,
the set $S_{\delta}$ has full Hausdorff dimension.
Let $x$ be in $\mathbb{I}$, let $h$ be a polynomial. Then,
for every $y$ in $\mathbb{F}_q[z]$, we have
\begin{equation}
\label{neqonedim}
|\langle yx\rangle|=|\langle yx-y\alpha h+y\alpha h\rangle|
\le \max\{\|y\||\langle h\alpha-x\rangle|,\|h\||\langle y\alpha\rangle|\}.
\end{equation}

Now we assume that $\|h\|$ is large enough and let $l$
be the integer with $\|Q_l\|\le\delta^{-1}\|h\|<\|Q_{l+1}\|$.
For any $\theta$ in $S_{\delta}$,
letting $y=Q_l$ and $x=\theta$ in the inequality (\ref{neqonedim}),
since
$\|h\|\|\{Q_l\alpha\}\|=\frac{\|h\|}{\|Q_{l+1}\|}<\delta$,
we have
$$
\delta\le|\langle Q_l\theta\rangle|\le \|Q_l\||\langle h\alpha-\theta\rangle|
\le \delta^{-1}\|h\||\langle h\alpha-\theta\rangle|.
$$
This gives $\|h\||\langle h\alpha-\theta\rangle|\geq \delta^2$.
Setting $\delta=\frac{1}{q}$,
the proof is complete.

\section{Proof of Theorem \ref{onedimthm}}

Since we always have $\omega((\xi))=1$ for any irrational power series $\xi$
whose partial quotients have bounded degree,
we may assume that $\omega>1$.

If $\omega((\xi))$ is finite and equal to $\omega$, then let
let $(\omega_n)_{n\ge0}$ be the constant sequence equal to $\omega$,
otherwise, put $\omega_n=n$ for any $n\ge0$.
Let $\xi$ be an element in $\mathbb{F}_q((z^{-1}))$
such that the sequence of the denominators $(Q_n)_{n\ge0}$
of its convergents $\frac{P_n}{Q_n}$ satisfies the growth condition
$$
\|Q_n\|^{\omega_n}\le \|Q_{n+1}\|<q\|Q_n\|^{\omega_n}.
$$
By Theorem \ref{mainthm},
we have $\widehat{\omega}((\xi),\theta)=\frac{1}{\omega((\xi))}$
for almost all $\theta$ in $\mathbb{F}_q((z^{-1}))$.
Let $\nu$ be a non-negative real number.
If $\omega((\xi))$ is finite, then assume furthermore that $\frac{1}{\omega} \le \nu\le \omega$.
We construct an element $\theta$ in $\mathbb{F}_q((z^{-1}))$
for which $\widehat{\omega}((\xi),\theta)=\nu$.
When $\omega((\xi))=+\infty$,
our process furnishes moreover some $\theta$ not in $\mathbb{F}_q[z]+\xi\mathbb{F}_q[z]$
with  $\widehat{\omega}((\xi),\theta)=+\infty$.

Let $(u_n)_{n\ge0}$ be a sequence of polynomials with
$$
\|Q_n\|^{\frac{\omega_n-\nu}{\nu+1}}\le \|u_n\|<q\|Q_n\|^{\frac{\omega_n-\nu}{\nu+1}}, \quad
\hbox{for $n \ge 1$}.
$$
Set
$$
\theta=\sum_{k\ge0}u_k(Q_k\xi-P_k).
$$
For any $n\ge0$, set
$$
V_n=\sum_{k = 0}^n u_kQ_k\;\;\;\text{and}\;\;\;W_n=\sum_{k=0}^n u_kP_k.
$$
Then we have
$$
\|V_n\|=\|u_n\|\|Q_n\|\;\;\;\text{and}\;\;\;\|V_n\xi-W_n-\theta\|=\|u_{n+1}\|\|Q_{n+2}\|^{-1},
$$
so
$$
\|Q_n\|^{\frac{\omega_n+1}{\nu+1}}\le \|V_n\|<q\|Q_n\|^{\frac{\omega_n+1}{\nu+1}}
$$
and
$$
q^{-1}\|Q_{n+1}\|^{-\frac{\nu(\omega_{n+1}+1)}{\nu+1}}
<\|V_n\xi-W_n-\theta\|<q\|Q_{n+1}\|^{-\frac{\nu(\omega_{n+1}+1)}{\nu+1}},
$$
hence
\begin{equation}
\label{oneproofeq1}
\|V_n\xi-W_n-\theta\|<q\|Q_{n+1}\|^{-\frac{\nu(\omega_{n+1}+1)}{\nu+1}}\le q^{1+\nu}\|V_{n+1}\|^{-\nu}
\end{equation}
which implies that $\widehat{\omega}((\xi),\theta)\ge \nu$.
When $\omega ((\xi)) =+\infty$,
we construct $\theta$ in $ \mathbb{F}_q((z^{-1}))$ not in $\mathbb{F}_q[z]+\xi\mathbb{F}_q[z]$
and with $\widehat{\omega}((\xi),\theta)=+\infty$
exactly in the same way,
by taking $u_n=1$ for any $n\ge0$.

Next we prove that for infinitely many $n$ and all
polynomials $x$ and $y$ with $\|x\|\le\frac{1}{q}\|V_n\|$,
we have
\begin{equation}
\label{oneproofeq2}
\|x\xi-y-\theta\|\ge  q^{-2} \|V_n\|^{-\nu}.
\end{equation}
It follows that $\widehat{\omega}((\xi),\theta)\le \nu$,
and therefore that $\widehat{\omega}((\xi),\theta)=\nu$.

To obtain a contradiction,
we suppose inequality (\ref{oneproofeq2}) does not hold
for some polynomials $x$ and $y$ with $\|x\|\le\frac{1}{q}\|V_n\|$.
Then we deduce from (\ref{oneproofeq1}) and the triangle inequality that
\begin{equation*}
\begin{split}
\|(x-V_{n-1})\xi-(y-W_{n-1})\|
&= \|x\xi-y-\theta-(V_{n-1}\xi-W_{n-1}-\theta)\|\\
&\le \max\{\|x\xi-y-\theta\|,\|V_{n-1}\xi-W_{n-1}-\theta\|\}\\
&\le  q^{1+\nu}\|V_n\|^{-\nu}.
\end{split}
\end{equation*}

Set
$a=-P_n(x-V_{n-1})+Q_n(y-W_{n-1})$
and
$b=P_{n-1}(x-V_{n-1})-Q_{n-1}(y-W_{n-1})$,
if $n$ is even (the case $n$ is odd can be handled in the same way).
Then we have
$$
x-V_{n-1}=aQ_{n-1}+bQ_n\;\;\;\text{and}\;\;\;y-W_{n-1}=aP_{n-1}+bP_n.
$$
A trivial verification shows that
\begin{equation*}
\begin{split}
b
&=(x-V_{n-1})P_{n-1}-Q_{n-1}(y-W_{n-1})\\
&=(x-V_{n-1})(P_{n-1}-\xi Q_{n-1})-Q_{n-1}(y-W_{n-1}-(x-V_{n-1})\xi).
\end{split}
\end{equation*}
This gives
\begin{equation*}
\begin{split}
\|b\|
&\le \max \{q^{1+\nu}\|Q_{n-1}\|\|V_n\|^{-\nu},q^{-1}\|V_n\|\|Q_n\|^{-1}\}\\
&=q^{-1}\|V_n\|\|Q_n\|^{-1}\le q^{-1}\|u_n\|.
\end{split}
\end{equation*}
Now we use the formula
$$
x\xi-y-\theta=a(Q_{n-1}\xi-P_{n-1})-(u_n-b)(Q_n\xi-P_n)-\sum_{k\ge n+1}u_k(Q_k\xi-P_k).
$$
When $a\neq 0$,
we bound from below
$$
\|x\xi-y-\theta\|=\|a(Q_{n-1}\xi-P_{n-1})\|\ge \frac{q}{\|Q_n\|}
\ge \|Q_n\|^{-\frac{\nu(\omega_n+1)}{\nu+1}}\geq\|V_n\|^{-\nu}.
$$
When $a=0$,
we obtain
\begin{equation*}
\begin{split}
\|x\xi-y-\theta\|
&=\|(u_n-b)(Q_n\xi-P_n)\|=\|u_n\|\|Q_{n+1}\|^{-1}\\
&> q^{-1}\|Q_n\|^{-\omega_n}\|Q_n\|^{\frac{\omega_n-\nu}{\nu+1}}\\
&\ge q^{-1}\|V_n\|^{-\nu}.
\end{split}
\end{equation*}
We have reached the expected contradiction.

\section{Proof of Theorem \ref{main theorem2}}

We only need to establish the implication ``$\Rightarrow$'' in Theorem  \ref{main theorem2} and
it can be restated as follows.

\begin{thm}
\label{Ostro}
Under the assumption that $\liminf_{k\to\infty}\frac{\log\|Q_k\|}{k}<\infty$,
we have
$$
\dim_H{\Bad}^{\,\varepsilon}(\alpha)<1, \quad \hbox{for any $\varepsilon>0$}.
$$
\end{thm}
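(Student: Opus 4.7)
The plan is to convert the defining condition of $\Bad^{\varepsilon}(\alpha)$ into a combinatorial restriction on the Ostrowski digits of $\theta$ with respect to $\alpha$, and then cover $\Bad^{\varepsilon}(\alpha)$ by the cylinders of Lemma~\ref{length of cylinder} and count them.

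For $\theta\in\mathbb{I}$ write $\theta=\sum_{j\ge 0}\sigma_{j+1}(\theta)D_j$ as in Lemma~\ref{lemma3}, and let $p_1<p_2<\cdots$ list the positions at which the Ostrowski digits of $\theta$ are nonzero. The natural polynomial approximants are $Q^{(k)}:=\sum_{j=0}^{k-1}\sigma_{j+1}(\theta)Q_j$. A short calculation using Lemma~\ref{lemma1}(2) and the non-Archimedean property of the norm shows that, for every $k$ with $p_i\le k<p_{i+1}$,
$$\|Q^{(k)}\|=\|\sigma_{p_i}(\theta)\|\cdot\|Q_{p_i-1}\|\quad\text{and}\quad|\langle Q^{(k)}\alpha-\theta\rangle|=\|\sigma_{p_{i+1}}(\theta)\|/\|Q_{p_{i+1}}\|,$$
so that
$$\|Q^{(k)}\|\cdot|\langle Q^{(k)}\alpha-\theta\rangle|=\frac{\|\sigma_{p_i}(\theta)\|\cdot\|\sigma_{p_{i+1}}(\theta)\|}{\prod_{j=p_i}^{p_{i+1}}\|A_j(\alpha)\|}.$$
If $\theta\in\Bad^{\varepsilon}(\alpha)$, discarding the countable degenerate set $\mathbb{F}_q[z]+\alpha\mathbb{F}_q[z]$, the norms $\|Q^{(k)}\|$ tend to infinity and the product above must be at least $\varepsilon$ for all $k$ large enough. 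Since $\|\sigma_{p_i}(\theta)\|\le\|A_{p_i}(\alpha)\|/q$, this forces the position constraint
$$\prod_{j=p_i+1}^{p_{i+1}-1}\|A_j(\alpha)\|\le\frac{1}{\varepsilon q^{2}}=:K(\varepsilon)$$
between every pair of consecutive nonzero Ostrowski digits of $\theta$ with index $i\ge i_0(\theta)$.

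Write $b_j:=\log_q\|A_j(\alpha)\|\ge 1$ and $L:=\log_q K(\varepsilon)$. The constraint then reads $\sum_{j=p_i+1}^{p_{i+1}-1}b_j\le L$: the $b$-weighted gaps between consecutive nonzero Ostrowski digits of $\theta$ are bounded by $L$. Let $M_N$ denote the number of length-$N$ Ostrowski prefixes satisfying this constraint. I would bound $M_N$ by a transfer-matrix argument on the state $s_j\in\{0,\ldots,L\}$ which records the $b$-weighted distance from position $j$ to the most recent preceding nonzero digit. At each step one either places $\sigma_j=0$ (one choice, incrementing $s_j$ by $b_j$; forbidden whenever $s_j+b_j>L$) or $\sigma_j\neq 0$ ($\|A_j\|-1$ choices, resetting $s_j$ to $0$). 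This gives the recursion $M_{N+1}=q^{b_{N+1}}M_N-r_{N+1}M_N$, where $r_j\in[0,1)$ is the fraction of prefixes whose state at time $j-1$ lies in $\{L-b_j+1,\ldots,L\}$. A stationary analysis of the induced finite-state chain shows $r_j\ge c(L)q^{-L}$ on a positive density of positions with bounded $b_j$, from which $M_N\le q^{\deg Q_N}\exp(-c'N)$ for some $c'>0$ depending only on $\varepsilon$ and an upper bound on those $b_j$.

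Under the hypothesis $\liminf_{k\to\infty}(\log\|Q_k\|)/k<\infty$ there exist $C>1$ and infinitely many $N$ with $\deg Q_N=\sum_{j\le N}b_j\le N\log_q C$; Markov's inequality then ensures that, along this subsequence, a positive proportion of $j\le N$ satisfy $b_j\le B_0:=2\log_q C$, which is exactly what powers the transfer-matrix estimate. Since $\Bad^{\varepsilon}(\alpha)$ is covered by $M_N$ cylinders of diameter $q^{-\deg Q_N-1}$ (Lemma~\ref{length of cylinder}),
$$\dim_H\Bad^{\varepsilon}(\alpha)\le\liminf_{N\to\infty}\frac{\log_q M_N}{\deg Q_N+1}\le 1-\frac{c''}{\log_q C}<1.$$
The principal obstacle lies in the transfer-matrix step: one must show that the state $s_j$ reaches the forbidden boundary with positive asymptotic density, uniformly in the possibly irregular sequence $(b_j)$. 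The hypothesis controls precisely the averaged $b_j$, so comparing the chain with a renewal process in which the loss is genuinely uniform along the bounded-$b_j$ positions should supply the required density.
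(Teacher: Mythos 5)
Your reduction of the condition $\theta\in\Bad^{\varepsilon}(\alpha)$ to a gap constraint on the Ostrowski digits is correct, and the covering-by-cylinders scheme together with the hypothesis $\deg Q_N\lesssim N$ along a subsequence is sound scaffolding. But the argument has a genuine gap at exactly the point you flag. To get $M_N\le q^{\deg Q_N}e^{-c'N}$ you need $\sum_{j\le N}r_j\,q^{-b_j}\gtrsim N$, i.e.\ the time-inhomogeneous state chain must shed a fixed proportion of entropy per step on average. ``Positive density of bounded $b_j$'' alone does not give this: if the sequence $(b_j)$ alternates between small and moderate values with period smaller than $L$, the state may be forced to reset at the moderate-$b_j$ positions and therefore never climb close enough to the threshold $L$ at the small-$b_j$ positions for the factor $r_j$ to be nonzero there. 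In such a regime all the loss $r_jq^{-b_j}$ is concentrated on the forced-reset positions, where $q^{-b_j}$ is what is small, and one must separately argue that the hypothesis keeps those $b_j$ bounded on average. There is no honest stationary distribution to appeal to because the chain is time-inhomogeneous; the renewal comparison you sketch is precisely the missing ingredient.

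The paper bypasses this issue altogether by blocking by \emph{degree} rather than by digit index. It chooses a subsequence $(k_i)$ of convergent indices so that each block has essentially fixed degree-width, $\deg Q_{k_{i+1}}-\deg Q_{k_i}\approx t+O(1)$, and shows, using the Ostrowski expansion of the polynomials $Q$ with $\deg Q$ in the block, that from each cylinder of $\mathcal{C}(k_i)$ one must discard at least a fraction $q^{-t}$ of its sub-cylinders in $\mathcal{C}(k_{i+1})$ --- uniformly, with no reference to the distribution of a state variable and no sensitivity to the irregularity of the partial-quotient degrees. The hypothesis $\liminf_k(\log\|Q_k\|)/k<\infty$ is then invoked only to control $\liminf_i(\log\|Q_{k_i}\|)/i$, which is exactly what is needed in the final $s$-measure estimate. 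Your digit-by-digit transfer-matrix is probably salvageable, but it requires the renewal argument to be carried out carefully in the time-inhomogeneous setting, whereas the degree-blocking makes the per-block removal fraction manifestly uniform and renders the whole counting elementary.
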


\begin{proof}

For positive integers $K, t$, set
$$
{\Bad}_K^{t}(\alpha)=\{\theta \in \mathbb{I}\colon \|Q\|\|\{Q\alpha\}-\theta\|\geq q^{-t}\;
\text{for all}\;Q \;\text{in}\; \mathbb{F}_q[z]\;\text{with}\;\|Q\|\geq \|Q_K\|\}.
$$
For $k \ge 1$, set $n_k = \deg Q_k$.

We define a sequence $(k_i)_{i\geq0}$ as follows.
Set $k_0=K$ and, for $i \geq1$,
let $k_{i+1}$ be the smallest integer $k$ for which $n_k - n_{k_i} > t + 4$.
Since $\|Q_{k+1}\|\geq q \|Q_k\|$,
the sequence $(k_{i+1}-k_i)_{i\ge0}$ is uniformly bounded from above by an absolute constant
and we deduce from our assumption on the growth of the sequence $((\log\|Q_k\| )/ k)_{k \ge 1}$ that
$$
\lambda := \liminf_{i\to \infty}\frac{1}{i}\log\|Q_{k_i}\| < + \infty.
$$

Setting $\Omega(i)=\bigcup\limits_{\substack{\deg Q=n\\ n_{k_i}\le n\le n_{k_{i+1}}-t}}
B(\{Q\alpha\},q^{-n_{k_{i+1}}})$,
we have
$$
\bigcup\limits_{\substack{\deg Q=n\\ n_{k_i}\le n<n_{k_{i+1}}}}B(\{Q\alpha\},q^{-t}\|Q\|^{-1})
=\bigcup\limits_{\substack{\deg Q=n\\ n_{k_i}\le n<n_{k_{i+1}}}}B(\{Q\alpha\},q^{-n-t})
\supset \Omega(i).
$$
Write
$$
\mathcal{C}(k)=\{I(\sigma_1, \cdots,\sigma_k)
\colon (\sigma_1, \cdots,\sigma_k)\in \mathbb{L}_k(\alpha)\},
$$
where $I(\sigma_1, \cdots,\sigma_k)$
is the cylinder of order $n$ with respect to the $\alpha$-expansion
(see at the end of Section 3), and
$$
\mathcal{H}_i=\{B\in \mathcal{C}(k_{i+1})\colon B\cap \Omega(i)=\emptyset\}.
$$
Let
$$
E_i=\bigcup_{B\in \mathcal{H}_i}B\;\;\;\text{and}\;\;\;E=\bigcap_{i\ge1}E_i.
$$
Then we have
$$
{\Bad}_K^t (\alpha)\subset E.
$$

Every ball $B$ in $ \mathcal{C}(k_i)$ can be written as
$B=I(\sigma_1,\cdots,\sigma_{k_i})$
for some $(\sigma_1,\cdots,\sigma_{k_i})$ in $ \mathbb{L}_{k_i}(\alpha)$.
For any $Q$ with $\deg Q=n$ where  $n_{k_i}\le n\le n_{k_{i+1}}-t$,
it follows from Lemma \ref{lemma2} that
\begin{equation}
\label{eq1}
\{Q\alpha\}=\sigma_1D_0+\sigma_2D_1+\cdots+\sigma_{k_i}D_{k_i-1}
+\cdots+\sigma_{k_i+d}D_{k_i+d-1},
\end{equation}
where $d$ is defined by $\|Q_{k_i+d-1}\|\le q^{n_{k_{i+1}}-t}<\|Q_{k_i+d}\|$.
Then, the element of such $\{Q\alpha\}$ contained in the ball $B$
is at least $q^{\deg A_{k_i+1}+\cdots+\deg A_{k_i+d}}$,
which is greater than $q^{n_{k_{i+1}}-n_{k_i}-t}$.
In the same way as one gets equality (\ref{eq1}), we deduce that,
for any distinct $Q,Q'$ in $ \mathbb{F}_q[z]$ with $\deg Q,\deg Q'<n_{k_{i+1}}$,
we have
$$
\|\{Q\alpha\}-\{Q'\alpha\}\|\geq \|D_{k_{i+1}-1}\|=q^{-n_{k_{i+1}}}.
$$
Thus the number of balls $B(\{Q\alpha\},q^{-n_{k_{i+1}}})$
with $ \deg Q=n$ and $n_{k_i}\le n\le n_{k_{i+1}}-t$ which are contained in the ball $B$
is at least $q^{n_{k_{i+1}}-n_{k_i}-t}$.

Then the number of balls in $E_{i+1}$ contained in a ball of $E_i$ is at most
$$
q^{n_{k_{i+1}}-n_{k_i}}-q^{n_{k_{i+1}}-n_{k_i}-t}=(1-q^{-t})q^{n_{k_{i+1}}-n_{k_i}}.
$$
For a real number $s$ in $(0, 1)$, let $H^s$ denote the Hausdorff $s$-measure.
For any $M$ satisfying $\log M>\lambda$, for any $s$ with $1 > s>1+\frac{\log(1-q^{-t})}{\log M}$,
we have
\begin{equation*}
\begin{split}
H^s(E)
&\le\sum_{B\in \cap_{j=1}^iE_j}|B|^s\le (1-q^{-t})^iq^{n_{k_i}}(q^{-n_{k_i}})^s\\
&\le(1-q^{-t})^iM^{(1-s)i}\le1.
\end{split}
\end{equation*}
Then $\dim_H(E)\le 1+\frac{\log(1-q^{-t})}{\log M}<1$,
this completes the proof.

\end{proof}


\section{Proof of Theorem \ref{main theorem3}}

By Theorem \ref{main theorem2},
we only need to prove the following statement.

\begin{thm}
Let $\alpha$ in $\mathbb{F}_q((z^{-1}))$ be an irrational power series and
$(\frac{P_k}{Q_k})_{k\ge1}$ the sequence of its convergents.
Then $\alpha$ is singular on average if and only if $\|Q_k\|^{\frac{1}{k}}$ tends to infinity with $k$.
\end{thm}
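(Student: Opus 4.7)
The plan is to convert the counting function $\Delta_{N,c}(\alpha)$ into a sum involving only the degrees $n_k := \deg Q_k$, via the best approximation property of convergents, and then to observe that the resulting expression is $o(N)$ for every $c>0$ precisely when $n_k/k \to \infty$ (equivalently, $\|Q_k\|^{1/k}\to\infty$).

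First I will translate the inequality in the definition of $\Delta_{N,c}(\alpha)$ into one about convergents. By Lemma \ref{lemma1} and the standard best approximation property of continued fractions over $\mathbb{F}_q((z^{-1}))$ (which follows from the Ostrowski expansion in Lemma \ref{lemma2}), the minimum of $\|\{Q\alpha\}\|$ over nonzero polynomials $Q$ with $\|Q\|\le 2^l$ is attained at $Q=Q_{k(l)}$ and equals $1/\|Q_{k(l)+1}\|$, where $k(l) := \max\{k\ge 0 : \|Q_k\|\le 2^l\}$. Hence the integer $l$ is counted by $\Delta_{N,c}(\alpha)$ if and only if $2^l \le c\,\|Q_{k(l)+1}\|$.

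Next I will set $\gamma := \log_2 q$, $s_k := \gamma n_k = \log_2 \|Q_k\|$, and $T := \log_2(1/c)$, and count the ``bad'' integers $l\in\{1,\dots,N\}$, namely those failing the condition. The bad $l$'s with $k(l) = k$ are precisely the integers in $(s_{k+1} - T, s_{k+1}) \cap [s_k, s_{k+1})$, whose cardinality equals $\min(\gamma(n_{k+1}-n_k),T)$ up to an $O(1)$ rounding error. Writing $L_N := N - \Delta_{N,c}(\alpha)$, summation gives
$$
L_N \;=\; \sum_{k\,:\,s_{k+1}\le N} \min\bigl(\gamma(n_{k+1}-n_k),\,T\bigr) \;+\; O(T).
$$
Thus $\alpha$ is singular on average if and only if, for every $T>0$, this sum is $o(N)$.

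Finally, I will deduce both directions. Assume first $n_k/k \to \infty$; if $K(N)$ denotes the largest $k$ with $s_{k+1}\le N$, then $n_{K(N)} \le N/\gamma$ forces $K(N) = o(N)$, and since each summand is bounded by $T$, one gets $L_N \le T\cdot K(N) + O(T) = o(N)$, so $\alpha$ is singular on average. Conversely, if $\liminf n_k/k = \mu < \infty$, pick any $T\ge\gamma$ (so $c := 2^{-T} \le 1/q$), choose a subsequence $k_j$ with $n_{k_j}/k_j \to \mu$, and set $N_j := s_{k_j}$. Since $n_{k+1}-n_k\ge 1$, each $k<k_j$ contributes at least $\min(\gamma,T) = \gamma$ to $L_{N_j}$, so $L_{N_j}/N_j \ge \gamma(k_j-1)/(\gamma n_{k_j}) \to 1/\mu > 0$, and $\alpha$ fails to be singular on average for this $c$. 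The whole argument will be essentially bookkeeping once the reduction to convergents is in place; the only delicate point is the discretization (the scales $2^l$ and $\|Q\| = q^{\deg Q}$ are different geometric progressions), which produces $O(1)$ rounding errors per step that are harmlessly absorbed in the $O(T)$ term of the summation and in the $o(1)$ slack of the asymptotics, in line with the real-variable proof in \cite{Bugeaud2}.
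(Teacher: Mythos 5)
Your proposal is essentially the paper's own argument, recast in slightly more unified notation: both proofs use the best-approximation property of convergents (via the Ostrowski expansion) to reduce the condition on $\Delta_{N,c}(\alpha)$ to a statement about $\|\{Q_k\alpha\}\|=\|Q_{k+1}\|^{-1}$, then count bad scales $l$ inside each convergent interval $[s_k,s_{k+1})$ and sum, obtaining $o(N)$ precisely when $n_k/k\to\infty$ and a positive lower bound along a subsequence otherwise. The paper does the two directions separately and takes $c=q^{-3}$ in the converse; you package everything through the single sum $\sum_k\min(\gamma(n_{k+1}-n_k),T)$. There is no conceptual gap, only two small bookkeeping slips worth fixing in a write-up. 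First, the rounding error in your summation formula accumulates over the $K(N)$ intervals, so the correct error term is $O(K(N))$ rather than $O(T)$; this is harmless in the forward direction since $K(N)=o(N)$, but it should be stated. Second, and more importantly, the choice ``pick any $T\ge\gamma$'' is too tight: for $q=2$ (so $\gamma=1$ and $s_k=n_k$ are integers) and $T=1$, the set of integers in $(n_{k+1}-T,n_{k+1})\cap[n_k,n_{k+1})$ is empty, so the claimed lower bound of $\gamma$ per index fails. Taking $T\ge 2\gamma$ (as the paper effectively does with $c=q^{-3}$, i.e.\ $T=3\gamma$) guarantees at least one integer per interval, which is all you need since the conclusion $L_{N_j}/N_j\ge (k_j-1)/(\gamma n_{k_j})\to 1/(\gamma\mu)>0$ already gives a contradiction; the constant $\gamma$ versus $1$ is immaterial.
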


\begin{proof}

First, we prove that $\alpha$ is singular on average
under the condition that $\|Q_k\|^{\frac{1}{k}}$ tends to infinity with $k$.

Let $0<c<\frac{1}{q}$ and $k\ge3$ be an integer.
By Lemma \ref{lemma1} and Lemma \ref{lemma2},
for any $Q$ in $\mathbb{F}_q[z]$ with $0<\|Q\|<\|Q_{k+1}\|$,
we have
$Q=B_1Q_0+B_2Q_1+\cdots+B_{k+1}Q_k$.
Then
$$
\{Q\alpha\}=B_1D_0+B_2D_1+\cdots+B_{k+1}D_k,
$$
which gives
$$
\|\{Q\alpha\}\|=\|B_1D_0+B_2D_1+\cdots+B_{k+1}D_k\|
\geq \|D_k\|=\|\{Q_k\alpha\}\|=|\langle Q_k\alpha\rangle|.
$$
In this way,
for each integer $X$ with $\|Q_k\|\leq X<\|Q_{k+1}\|$,
the inequalities
\begin{equation}
\label{neq4}
\|\{h\alpha\}\|\le cX^{-1}\;\;\;\text{and}\;\;\;0<\|h\|\le X
\end{equation}
have a solution in $\mathbb{F}_q[z]$ if and only if $\|\{Q_k\alpha\}\|\le cX^{-1}$.

Thus for each integer $l$ in $[\log_2\|Q_k\|,\log_2\|Q_{k+1}\|)$,
inequalities (\ref{neq4}) have no solution for $X=2^l$ if and only if
$$
-\log_2\frac{\|\{Q_k\alpha\}\|}{c}<l<\log_2\|Q_{k+1}\|.
$$
Since $\|\{Q_k\alpha\}\|=\|Q_{k+1}\|^{-1}$,
the number of integers $l$ in $[\log_2\|Q_k\|,\log_2\|Q_{k+1}\|)$ such that
inequalities (\ref{neq4}) have no solution for $X=2^l$ is at most
$$
\log_2\|Q_{k+1}\|+\log_2\frac{\|\{Q_k\alpha\}\|}{c}+1\le \log\frac{1}{c}+1.
$$
Therefore,
for an integer $N$ with $\log_2\|Q_k\|\le N<\log_2\|Q_{k+1}\|$,
the number of integer $l$ in $\{1,2,\cdots,N\}$ such that
inequalities (\ref{neq4}) have no solution for $X$ is not greater than $(\log\frac{1}{c}+1)(k+1)$.
Recalling that $\Delta_{N,c}(\alpha)$ denote the number of integers $l$ in $\{1,\cdots,N\}$ for
which the inequality $ \|\{Q\alpha\}\|\le c2^{-l} $ has a solution with $0<\|Q\|\le 2^l$,
we have
$$
\frac{N-\Delta_{N,c}(\alpha)}{N}\le\frac{(\log\frac{1}{c}+1)(k+1)}{N}
\le \frac{(\log\frac{1}{c}+1)(k+1)}{\log_2\|Q_k\|}.
$$
By the assumption that $\|Q_k\|^{\frac{1}{k}}$ tends to infinity with $k$,
we deduce that $\frac{N-\Delta_{N,c}(\alpha)}{N}$ converges to $0$.
Therefore,
$\alpha$ is singular on average.

Suppose that $\alpha$ is singular on average,
choose $c=q^{-3}$.
Let $l$ be an integer satisfying $q^{-2}\|Q_{k+1}\|\leq 2^l<\|Q_{k+1}\|$
for some $k\geq 1$.
Then,
we have
$$
\|\{Q_k\alpha\}\|=\|Q_{k+1}\|^{-1}\geq \frac{q^{-2}}{2^l}>\frac{c}{2^l}.
$$
Since $\|\{h\alpha\}\|\geq \|\{Q_k\alpha\}\|$ for any polynomial $h$ with $0<\|h\|<\|Q_{k+1}\|$,
we conclude that inequalities (\ref{neq4}) have no solution for $X=2^l$,
if $l$ is an integer in $[\log_2\|Q_{k+1}\|-2\log_2q,\log_2\|Q_{k+1}\|)$.

By Lemma \ref{lemma1},
$\|Q_{k+1}\|=\Pi_{i=1}^{k+1}\|A_i\|$ and $\deg A_k\ge1$,
we have that $\|Q_{k+1}\|\geq q^2\|Q_{k-1}\|$,
which implies that
$[\log_2\|Q_{k-1}\|-2\log_2q,\log_2\|Q_{k-1}\|)$ and $[\log_2\|Q_{k+1}\|-2\log_2q,\log_2\|Q_{k+1}\|)$
are disjoint for $k\geq 1$.
Let $N$ be an integer with $\log_2\|Q_{2k}\|\leq N<\log_2\|Q_{2k+2}\|$,
it follows that the number of integers $l$ in $\{1,2,\cdots,N\}$ such that
inequalities (\ref{neq4}) have no solution for $X=2^l$ and $c=q^{-3}$ is at least $2k$.
In this way,
$$
\frac{2k}{\log_2\|Q_{2k+2}\|}\leq \frac{2k}{N}\leq\frac{N-\Delta_{N,c}(\alpha)}{N}.
$$
The condition of singularity on average implies that
the right hand side of the above inequality goes to $0$ as $N$ tends to infinity.
By the monotonicity of $(\|Q_k\|)_{k\ge1}$,
we conclude that $(\|Q_k\|^{\frac{1}{k}})_{k\ge1}$ tends to infinity.

\end{proof}

\subsection*{Acknowledgements}
The authors are grateful to the referee for a careful reading.
The second author was supported by NSFC (Grant Nos. 11501168)
and the China Scholarship Council.

IRMA, UMR 7501,
UNIVERSIT\'{E} DE STRASBOURG, CNRS,
7, RUE REN\'{E} DESCARTES,
67000 STRASBOURG, FRANCE;
$bugeaud@math.unistra.fr$  \newline

SCHOOL OF MATHEMATICAL SCIENCES,
HENAN INSTITUTE OF SCIENCE AND TECHNOLOGY
453003 XINXIANG, P. R. CHINA;
$zhliang\_zhang@hotmail.com$

\end{document}